\newtheorem{theorem}{Theorem}[section]
\newtheorem{lemma}{Lemma}[section]
\newtheorem{algorithm}{Algorithm}[section]
\newtheorem{example}{Example}[section]
\newtheorem{proposition}{Proposition}[section]
\newtheorem{assumption}{Assumption}
\newtheorem{remark}{Remark}[section]
\newtheorem{alemma}{Lemma}
\newenvironment{proof}{{\noindent \bf Proof:}}{\hfill$\Box$\medskip}
\definecolor{lred}{rgb}{1,0.8,0.8}
\definecolor{lblue}{rgb}{0.8,0.8,1}
\definecolor{dred}{rgb}{0.6,0,0}
\definecolor{dblue}{rgb}{0,0,0.5}
\definecolor{dgreen}{rgb}{0,0.5,0.5}
\definecolor{blue}{rgb}{0,0,0.9}
\definecolor{red}{rgb}{0.9,0,0}
\definecolor{green}{rgb}{0,0.9,0}
 \title{A multi-stage convex relaxation approach to noisy structured low-rank matrix recovery}
 \author{Shujun Bi\footnote{School of Mathematics, South China University of Technology, Tianhe District of Guangzhou City, China (bishj@scut.edu.cn).},
 \ \ Shaohua Pan\footnote{Corresponding author. School of Mathematics, South China University of Technology, Tianhe District of Guangzhou City, China (shhpan@scut.edu.cn).}\ \ and \  Defeng Sun\footnote{Department of Mathematics and Risk Management Institute, National University of Singapore, 10 Lower Kent Ridge Road, Singapore 119076 (matsundf@nus.edu.sg). }}
  \date{March 1, 2017}
\begin{document}

  \maketitle

 \begin{abstract}
  This paper concerns with a noisy structured low-rank matrix recovery problem which can be modeled
  as a structured rank minimization problem. We reformulate this problem as
  a mathematical program with a generalized complementarity constraint \!(MPGCC),
  and show that its penalty version, yielded by moving the generalized complementarity constraint
  to the objective, has the same global optimal solution set as the MPGCC does
  whenever the penalty parameter is over a threshold. Then, by solving the exact penalty
  problem in an alternating way, we obtain a multi-stage convex relaxation approach.
  We provide   theoretical guarantees for our approach under a mild restricted eigenvalue condition,
  by quantifying the reduction of the error and approximate rank bounds of the first stage convex
  relaxation (which is exactly the nuclear norm relaxation) in the subsequent stages
  and establishing the geometric convergence of the error sequence in a statistical sense.
  Numerical experiments are conducted for some structured low-rank matrix recovery examples  to
  confirm our theoretical findings.
 \end{abstract}

 \noindent
 {\bf Keywords}\ Structured rank minimization;  MPGCC;  Exact penalty; Convex relaxation

 \medskip
 \noindent
 {\bf Mathematics Subject Classification(2010).} 90C27, 90C33, 49M20

  \section{Introduction}\label{sec1}

  The task of noisy structured low-rank matrix recovery is to find a low-rank matrix
  with a certain structure consistent with some noisy linear measurements.
  Let $\overline{X}$ be the target matrix to be recovered and $b=\mathcal{A}\overline{X}+\xi$
  be the noisy measurement vector, where $\mathcal{A}\!:\mathbb{R}^{n_1\times n_2}\to\mathbb{R}^m$
  is the sampling operator and $\xi\in\mathbb{R}^m$ is the noisy vector with $\|\xi\|\le \delta$
  for some $\delta>0$. The noisy structured low-rank matrix recovery problem can be modeled as
  \begin{equation}\label{rank-min}
    \min_{X\in\mathbb{R}^{n_1\times n_2}}\!\big\{{\rm rank}(X)\!:\ \|\mathcal{A}X-b\|\le \delta,\, X\in\Omega\big\},
  \end{equation}
  where $\Omega\subseteq\mathbb{R}^{n_1\times n_2}$ is a compact convex set to represent
  the structure of $\overline{X}$. Without loss of generality, we  assume that
  $\mathcal{A}X\!:=(\langle A_1,X\rangle,\ldots,\langle A_m,X\rangle)^{\mathbb{T}}$
  for $X\in\mathbb{R}^{n_1\times n_2}$, where $A_1,\ldots,A_m$ are the given matrices
  in $\mathbb{R}^{n_1\times n_2}$. Such a structured rank minimization problem has wide
  applications in system identification and control \cite{Fazel02,FPST13},
  signal and image processing \cite{Haeffele14,ChenChi14}, machine learning \cite{Richard12},
  multi-dimensional scaling in statistics \cite{QiY14}, finance \cite{Pietersz04}, quantum tomography
  \cite{Gross11}, and so on. For instance, one is often led to seek a low-rank Hankel matrix
  in system identification and control, a low-rank correlation matrix in finance
  and a low-rank density matrix in quantum tomography.

  \medskip

  Due to  the combinatorial property of the rank function, problem \eqref{rank-min} is generally NP-hard.
  One popular way to  deal with NP-hard problems is to use the convex relaxation technique,
  which typically yields a desirable local optimal or at least a feasible solution via solving a single
  or a sequence of numerically tractable convex optimization problems. Fazel \cite{Fazel02}
  initiated the research for the nuclear norm relaxation method, motivated by the fact
  that the nuclear norm is the convex envelope of the rank function in the unit ball on the spectral norm.
  In the past ten years, this relaxation method has received much attention from many fields such as
  information, computer science, statistics, optimization, and so on (see, e.g.,
  \cite{Candes09,Gross11,Recht10,KesMO10,KolLT11,Negahban11,Toh10}), and it has been shown that
  a single nuclear norm minimization problem can recover the target matrix
  $\overline{X}$ under a certain restricted isometry property (RIP) of $\mathcal{A}$ when $\delta\!=0$
  \cite{Recht10} or can yield a solution satisfying a certain error bound when $\delta>0$
  \cite{Candes11}. For its recoverability and error bounds under other conditions,
  the interested readers may refer to the literature
  \cite{Dvijotham-SSP,Negahban11,Recht-NSP} and references therein.

  \medskip

  Most of the existing low-rank matrix optimization models are focused on the case that
  $\Omega=\mathbb{R}^{n_1\times n_2}$. When the structure on the target matrix is known,
  it is reasonable to consider the rank minimization problem \eqref{rank-min} with $\Omega$
  indicating the available information. However, the (hard) constraint $X\in\Omega$ often contradicts
  the role of the nuclear norm in promoting a low-rank solution. For example, when $\Omega$ consists of
  the set of correlation matrices, the nuclear norm relaxation method for \eqref{rank-min} may fail in
  generating a low-rank solution since the nuclear norm becomes a constant in the set $\Omega$.
  In addition, although some error bounds have been established for the nuclear norm relaxation method
  in the noisy setting \cite{Candes11,Negahban11,Negahban12}, they are minimax-optimal up to a logarithmic
  factor of the dimension \cite{Negahban12}, instead of a constant factor like the $l_1$-norm relaxation method
  for sparse regression \cite{Raskutti11}. These two considerations motivate
  us to seek more efficient convex relaxations.

  \subsection{Our main contribution}\label{subsec1.1}

  The main contribution of this work is the introduction of a multi-stage convex relaxation approach via
  an equivalent Lipschitz optimization reformulation. This approach can efficiently reduce
  the error bounds obtained from the nuclear norm convex relaxation. More specifically,
  we reformulate problem \eqref{rank-min} as an equivalent MPGCC by using a variational
  characterization of the rank function and verify that its penalized version,
  yielded by moving the generalized complementarity constraint to the objective,
  has the same global optimal solution set as the MPGCC does once the penalty parameter
  is over a threshold. This exact penalty problem not only has a convex feasible set
  but also possesses a Lipschitz objective function with a bilinear structure, which offers
  a favorable Lipschitz reformulation for problem \eqref{rank-min}. To the best of our knowledge,
  this is the first equivalent Lipschitz characterization for low-rank matrix optimization problems.
  Then with this reformulation, we propose a multi-stage convex relaxation approach by solving the exact penalty problem
  in an alternating way. In particular, under a restricted eigenvalue condition weaker than
  the RIP condition used in \cite{Candes11,Mohan10new}, we quantify the reduction of the error and approximate
  bounds derived from the first stage nuclear norm convex relaxation in the subsequent stages, and establish
  the geometric convergence of the error sequence in a statistical sense. Among others, the latter
  entails an upper estimation for the stage number of the convex relaxations to make the estimation error
  to reach the statistical error level. The analysis shows that the error and approximate
  rank bounds of the nuclear norm relaxation are reduced most in the second stage and the reduction rate
  is at least $40\%$ for those problems with a relatively worse restricted eigenvalue property, and the reduction
  becomes less as the number of stages increases and can be ignored after the fifth stage.
  \subsection{Related works}\label{subsec1.2}

  The idea of using the multi-stage convex relaxation for low-rank optimization problems is not new.
  In order to improve the solution quality of the nuclear norm relaxation method, some researchers pay their attention
  to nonconvex surrogates of low-rank optimization problems. Since seeking a global optimal solution of
  a nonconvex surrogate problem is almost as difficult as solving a low-rank optimization problem itself,
  they relax nonconvex surrogates into a sequence of simple matrix optimization problems,
  and develop the reweighted minimization methods (see \cite{Fazel03,Mohan12,LXW13}).
  In contrast to our multi-stage convex relaxation approach, such sequential convex relaxation methods
  are designed by solving a sequence of convex relaxation problems of nonconvex surrogates instead of
  the equivalent reformulation. We also notice that the theoretical analysis in \cite{Mohan10new}
  for the reweighted trace norm minimization method \cite{Fazel03} depends on the special property
  of the log-determinant function, which is not applicable to general low-rank optimization problems,
  and the theoretical guarantees in \cite{LXW13} were established only for the noiseless recovery problem.

  \medskip

  {Additionally}, some researchers have reformulated low-rank optimization problems as
  smooth nonconvex problems with the help of low-rank decomposition of matrices in the attempt
  to achieve a desirable solution by solving the smooth nonconvex problems in an alternating way
  (actually by solving a sequence of simple convex matrix optimization problems);
  see, e.g., \cite{RSebro-05,Jain-AM}. This class of convex relaxation methods has a theoretical guarantee,
  but is not applicable to those problems with hard constraints such as \eqref{rank-min}.

  \medskip

  Finally, it is worthwhile to point out that our multi-stage convex relaxation approach
  is highly relevant to the one proposed by Zhang \cite{Zhang10} for sparse regularization problems
  and the rank-corrected procedure  for the matrix completion problem with fixed coefficients \cite{MiaoPS16}.
  The former is designed via solving a sequence of convex relaxation problems for the nonconvex surrogates
  of the zero-norm regularization problem. Since the singular values vectors are involved in low-rank matrix recovery,
  the analysis technique in \cite{Zhang10} is not applicable to our multi-stage convex
  approach to problem \eqref{rank-min}. In particular, for low-rank matrix recovery,
  it is not clear whether the error sequence yielded by the multi-stage convex relaxation
  approach shrinks geometrically or not in a statistical sense, and if it does,
  under what conditions. We will answer these questions affirmatively in Section \ref{sec4}.
  The rank-corrected procedure \cite{MiaoPS16} is actually a two-stage convex relaxation approach
  in which the first-stage is to find a reasonably good initial estimator and the second-stage is
  to solve the rank-corrected problem. This procedure has already been applied to nonlinear
  dimensionality reduction problems \cite{Ding16} and tensor completion problems \cite{Bai16}.
  However, when the rank of the true matrix is unknown, the rank-corrected problem in \cite{MiaoPS16}
  needs to be constructed heuristically with the knowledge of the initial estimator,
  while each subproblem in our multi-stage convex relaxation approach stems from
  the global exact penalty of the equivalent MPGCC. In addition, the analytical technique used in \cite{MiaoPS16}
  is more reliant on concentration inequalities in probability analysis, whereas our analysis
  is deterministic and relies on the restricted eigenvalue property of linear operators.

  \subsection{Notation}\label{sec1.3}

  We stipulate $n_1\le n_2$ and let $\mathbb{R}^{n_1\times n_2}$ be the vector space of
  all $n_1\!\times\!n_2$ real matrices endowed with the trace inner product
  $\langle \cdot,\cdot\rangle$ and its induced norm $\|\cdot\|_F$.
  For $X\in\mathbb{R}^{n_1\times n_2}$, we denote $\sigma(X)\in\mathbb{R}^{n_1}$ by
  the singular value vector of $X$ with entries arranged in a non-increasing order,
  and $\|X\|_*$ and $\|X\|$ by the nuclear norm and the spectral norm of $X$, respectively.
  Let $\mathbb{O}^{n\times\kappa}$ be the set in $\mathbb{R} ^{n\times\kappa}$ consisting of
  all matrices whose columns are of unit length and are mutually orthogonal to each other,
  and denote $\mathbb{O}^{n} =\mathbb{O}^{n\times n}$. Let $e$ be the vector of all ones
  whose dimension is known from the context.

  \medskip

  Let $\Phi$ be the family of closed proper convex functions $\phi\!:\mathbb{R}\to(-\infty,+\infty]$ satisfying
  \begin{equation}\label{phi-assump}
   {\rm int}({\rm dom}\,\phi)\supseteq[0,1],\ \ 1>t^*:=\mathop{\arg\min}_{0\le t\le 1}\phi(t),\ \ \phi(t^*)=0\ \ {\rm and}\ \ \phi'_-(1)<+\infty.
  \end{equation}
   For each $\phi\in\Phi$, let $\psi:\mathbb{R}\to\mathbb{R}\cup\{+\infty\}$ be the associated
   proper closed convex function:
   \begin{equation}\label{phi-psi}
            \psi(t):=\!\left\{\!
                 \begin{array}{cl}
                  \phi(t) &\textrm{if}\ t\in [0,1],\\
                   +\infty & \textrm{otherwise}.
                 \end{array}
                 \right.
   \end{equation}
  Then from convex analysis \cite{Roc70} we know that the conjugate $\psi^*$ of $\psi$ has the properties:
  \begin{subnumcases}{}\label{dpsi-cg1}
          \partial\psi^*(t)=\big[(\psi^*)_{-}'(t),(\psi^*)_{+}'(t)\big]\subset[0,1]\quad \forall t\in\mathbb{R},\qquad\qquad\\
             (\psi^*)_{+}'(t_1)\le (\psi^*)_{-}'(t)\le (\psi^*)_{+}'(t)\le(\psi^*)_{-}'(t_2)\quad\ \forall t_1<t<t_2.
           \label{dpsi-cg2}
   \end{subnumcases}
  We also need the eigenvalues of $\mathcal{A}^*\mathcal{A}$ restricted to
  a set of low-rank matrices, where $\mathcal{A}^*$ denotes the adjoint of $\mathcal{A}$.
  To this end, for a given positive integer $k$, we define
  \begin{align}\label{rhok}
   \vartheta_{+}(k):=\!\sup_{0<{\rm rank}(X)\leq k}\!\frac{\langle X,\mathcal {A}^*\mathcal{A}(X)\rangle}{\|X\|_F^2}\ \
   {\rm and}\ \
   \vartheta_{-}(k):=\!\inf_{0<{\rm rank}(X)\leq k}\!\frac{\langle X,\mathcal {A}^*\mathcal{A}(X)\rangle}{\|X\|_F^2},
  \end{align}
  which can be viewed as the largest and the smallest rank $k$-restricted eigenvalues of $\mathcal {A}^*\mathcal{A}$, respectively.

  \section{Exact penalty for an equivalent reformulation}\label{sec2}

  First of all, we shall provide an equivalent reformulation of the rank minimization problem \eqref{rank-min}
  with the help of the following variational characterization of the rank function.
  \begin{lemma}\label{character-rank}
   Let $\phi\in \Phi$. Then, for any given $X\in\mathbb{R}^{n_1\times n_2}$, it holds that
   \begin{equation}\label{rank-min1}
    \phi(1)\, {\rm rank}(X)=\min_{W\in\mathbb{R}^{n_1\times n_2}}\!\Big\{{\textstyle\sum_{i=1}^{n_1}}\phi(\sigma_i(W))\!:\
     \|X\|_*\!-\!\langle W,X\rangle =0,\,\|W\|\le 1\Big\}.
   \end{equation}
  \end{lemma}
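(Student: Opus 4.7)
The plan is to exploit von Neumann's trace inequality together with the structure built into the class $\Phi$. Concretely, I would let $r:={\rm rank}(X)$ and consider a thin SVD $X=\sum_{i=1}^r\sigma_i(X)u_iv_i^{\mathbb{T}}$. For any feasible $W$, since $\|W\|\le 1$ means $\sigma_i(W)\in[0,1]$ for all $i$, von Neumann's trace inequality gives the chain
\[
\|X\|_*=\langle W,X\rangle\le\sum_{i=1}^{n_1}\sigma_i(W)\sigma_i(X)\le\sum_{i=1}^{n_1}\sigma_i(X)=\|X\|_*.
\]
Equality throughout forces $\sigma_i(W)=1$ for every $i$ with $\sigma_i(X)>0$, i.e.\ for $i=1,\ldots,r$, and moreover forces $W$ to admit a simultaneous SVD with $X$ (the equality condition in von Neumann's inequality). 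The remaining singular values $\sigma_{r+1}(W),\ldots,\sigma_{n_1}(W)$ are unconstrained beyond lying in $[0,1]$ (the ordering constraint is automatic since $t^*<1$).

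From this I would obtain the lower bound: using the definition of $\Phi$, in particular $\phi(t^*)=0$ with $t^*\in[0,1)$ and $t^*=\arg\min_{[0,1]}\phi$,
\[
\sum_{i=1}^{n_1}\phi(\sigma_i(W))=\sum_{i=1}^r\phi(1)+\sum_{i=r+1}^{n_1}\phi(\sigma_i(W))\ \ge\ r\phi(1)+(n_1-r)\phi(t^*)=\phi(1)\,{\rm rank}(X).
\]
To show the lower bound is attained, I would construct an explicit minimizer: complete $\{u_i\}_{i=1}^r$ and $\{v_i\}_{i=1}^r$ to orthonormal families $\{u_i\}_{i=1}^{n_1}$ in $\mathbb{R}^{n_1}$ and $\{v_i\}_{i=1}^{n_1}$ in $\mathbb{R}^{n_2}$, and set
\[
W^*:=\sum_{i=1}^r u_iv_i^{\mathbb{T}}+t^*\sum_{i=r+1}^{n_1}u_iv_i^{\mathbb{T}}.
\]
By construction the singular values of $W^*$ are $1,\ldots,1,t^*,\ldots,t^*$ (with $r$ ones), so $\|W^*\|=1$, and $\langle W^*,X\rangle=\sum_{i=1}^r\sigma_i(X)=\|X\|_*$, which gives feasibility. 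The objective value at $W^*$ equals $r\phi(1)+(n_1-r)\phi(t^*)=\phi(1)\,{\rm rank}(X)$, matching the lower bound.

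The only non-routine point is the equality case of von Neumann's trace inequality when $X$ has repeated or zero singular values, but since I only need the implication ``$\sigma_i(W)=1$ for $i\le r$'' rather than full uniqueness of $W$, this causes no difficulty; any $W$ sharing left/right singular bases with $X$ on the first $r$ components will do. The rest is bookkeeping based on the properties of $\phi$ encoded in~\eqref{phi-assump}.
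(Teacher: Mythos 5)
Your proposal is correct and follows essentially the same route as the paper: the von Neumann trace inequality chain forcing $\sigma_i(W)=1$ for $i\le r$, the lower bound $\sum_i\phi(\sigma_i(W))\ge\phi(1)\,{\rm rank}(X)$ from $\phi(t^*)=0$, and the same explicit minimizer $W^*=\sum_{i\le r}u_iv_i^{\mathbb{T}}+t^*\sum_{i>r}u_iv_i^{\mathbb{T}}$. The aside about equality in von Neumann's inequality forcing a simultaneous SVD is an unneeded (and, for degenerate singular values, imprecise) claim, but as you note it plays no role in the argument.
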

  \begin{proof}
   Fix an arbitrary matrix $X\in\mathbb{R}^{n_1\times n_2}$. Write $\kappa={\rm rank}(X)$
   and assume that $X$ has the SVD as $U[{\rm Diag}(\sigma(X))\ \ 0]V^{\mathbb{T}}$, where
   $U\!=[U_1\ \ U_2]\in\mathbb{O}^{n_1}$ and $V=[V_1\ \ V_2]\in\mathbb{O}^{n_2}$ with
   $U_1\in\mathbb{O}^{n_1\times\kappa}$ and $V_1\in\mathbb{O}^{n_2\times\kappa}$.
   Let $W\in\mathbb{R}^{n_1\times n_2}$ be an arbitrary feasible point of
   the minimization problem (\ref{rank-min1}). It then follows from \cite[Equation (3.3.25)]{HJ91} that
   \begin{equation*}
     \|X\|_*=\langle W,X\rangle\leq \langle \sigma(W), \sigma(X)\rangle\leq \|\sigma(X)\|_1=\|X\|_*,
   \end{equation*}
   which implies that $\sum_{i=1}^{n_1}(1-\!\sigma_i(W))\sigma_i(X)=0$.
   Along with $\sigma_i(W)\!\in[0,1]$ for $i=1,\ldots,n_1$, we obtain that
   $\sigma_i(W)=1$ if $\sigma_i(X)\ne 0$, and $\phi(\sigma_i(W))\geq 0$ if $\sigma_i(X)=0$.
   Consequently,
   \(
     \sum_{i=1}^{n_1}\phi(\sigma_i(W))\ge \phi(1)\,{\rm rank}(X),
   \)
   i.e., $\phi(1)\, {\rm rank}(X)$ is a lower bound for the optimal value of (\ref{rank-min1}).
   Clearly, $W^*\!=U_1V_1^{\mathbb{T}}+t^*U_2[{\rm Diag}(e)\ \ 0]V_2^{\mathbb{T}}$ with $t^*$ defined
   in \eqref{phi-assump} is feasible to (\ref{rank-min1}) with the objective value being
   $\phi(1)\,{\rm rank}(X)$. This shows that $W^*$ is an optimal solution of the minimization problem
   (\ref{rank-min1}) with the optimal value equal to $\phi(1)\, {\rm rank}(X)$.
  \end{proof}

  Recall that $\phi(1)>0$ for $\phi\in\Phi$. By Lemma \ref{character-rank},
  we readily have the following result.
 \begin{proposition}\label{equiv-prop}
  Let $\phi\in \Phi$. Then, the rank minimization problem \eqref{rank-min} is equivalent to
  \begin{align}\label{rank-MPGCC}
   &\min_{X,W\in\mathbb{R}^{n_1\times n_2}}{\textstyle\sum_{i=1}^{n_1}}\phi(\sigma_i(W))\nonumber\\
   &\qquad{\rm s.t.}\ \ \|\mathcal{A}(X)\!-\!b\|\le \delta,\,X\in\Omega,\\
   &\qquad\qquad \|X\|_*\!-\!\langle W,X\rangle=0,\,\|W\|\leq 1\nonumber
  \end{align}
  in the following sense: if $X^*=U^*[{\rm Diag}(\sigma(X^*))\ \ 0](V^*)^{\mathbb{T}}$
  is a global optimal solution of (\ref{rank-min}), where $U^*=[U_1^*\ \ U_2^*]\in\mathbb{O}^{n_1}$
  and $V^*=[V_1^*\ \ V_2^*]\in\mathbb{O}^{n_2}$ with $U_1^*\in\mathbb{O}^{n_1\times r}$
  and $V_1^*\in\mathbb{O}^{n_2\times r}$ for $r={\rm rank}(X^*)$,
  then $(X^*,U_1^*(V_1^*)^{\mathbb{T}}\!+\!t^*U_2^*[{\rm Diag}(e)\ \ 0](V_2^*)^{\mathbb{T}})$ is
  globally optimal to (\ref{rank-MPGCC}); and conversely, if $(X^*,W^*)$ is a global optimal
  solution to (\ref{rank-MPGCC}), then $X^*$ is globally optimal to (\ref{rank-min}).
 \end{proposition}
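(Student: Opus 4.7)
The plan is a partial-minimization argument built directly on Lemma \ref{character-rank}. In problem \eqref{rank-MPGCC}, the constraints split cleanly: $\|\mathcal{A}X-b\|\le\delta$ and $X\in\Omega$ involve only $X$, while for every fixed $X$ the constraints $\|X\|_*-\langle W,X\rangle=0$ and $\|W\|\le 1$ restrict $W$ alone. Lemma \ref{character-rank} evaluates the inner minimum over $W$ at each feasible $X$ to $\phi(1)\,{\rm rank}(X)$, and since $t^*<1$ is the unique minimizer of $\phi$ on $[0,1]$ with $\phi(t^*)=0$, we have $\phi(1)>0$. Hence minimizing the MPGCC objective over all feasible $(X,W)$ is equivalent to minimizing $\phi(1)\,{\rm rank}(X)$ over the feasible set of \eqref{rank-min}, so the two problems share the same optimal $X$'s and their optimal values differ only by the factor $\phi(1)$.

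For the precise correspondence of optimizers stated in the proposition, I would chain two facts supplied by Lemma \ref{character-rank}. First, for any feasible pair $(X,W)$ of \eqref{rank-MPGCC} the lemma gives ${\textstyle\sum_i}\phi(\sigma_i(W))\ge\phi(1)\,{\rm rank}(X)$. Second, the lemma's proof exhibits, for every feasible $X$, an explicit feasible $W$---precisely the formula $U_1V_1^{\mathbb{T}}+t^*U_2[{\rm Diag}(e)\ \ 0]V_2^{\mathbb{T}}$---that attains this bound with equality. The forward direction is then immediate: starting from a global optimizer $X^*$ of \eqref{rank-min}, the companion $W^*$ written in the statement is feasible for \eqref{rank-MPGCC} with objective value $\phi(1)\,{\rm rank}(X^*)$, which by the first inequality beats every competitor. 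For the converse, given a global optimizer $(X^*,W^*)$ of \eqref{rank-MPGCC} and any $\widetilde X$ feasible for \eqref{rank-min}, pairing $\widetilde X$ with its associated $\widetilde W$ from Lemma \ref{character-rank} yields a feasible point of \eqref{rank-MPGCC} with objective value $\phi(1)\,{\rm rank}(\widetilde X)$, so global optimality of $(X^*,W^*)$ combined with the first inequality forces
\[
\phi(1)\,{\rm rank}(X^*)\ \le\ {\textstyle\sum_{i=1}^{n_1}}\phi(\sigma_i(W^*))\ \le\ \phi(1)\,{\rm rank}(\widetilde X),
\]
and dividing by $\phi(1)>0$ gives ${\rm rank}(X^*)\le{\rm rank}(\widetilde X)$, i.e., $X^*$ is globally optimal for \eqref{rank-min}.

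I do not anticipate any substantive obstacle: Lemma \ref{character-rank} has already absorbed the combinatorial content of the rank function, and what remains is only the standard value-function argument that partial minimization preserves optimal values and optimizers along the outer variable. The one detail worth underlining is the strict positivity $\phi(1)>0$, which is exactly what permits the division step in the converse direction and thereby transfers optimality between the two problems without ambiguity.
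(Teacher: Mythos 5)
Your proposal is correct and follows exactly the route the paper intends: the paper gives no separate proof of Proposition \ref{equiv-prop}, stating only that it follows readily from Lemma \ref{character-rank} together with $\phi(1)>0$, and your partial-minimization argument (lower bound from the lemma for every feasible pair, equality attained by the explicit $W$, then the two-sided comparison of optimal values) is the natural fleshing-out of that remark. No gaps.
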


  The constraints $\|X\|_*\!-\!\langle W,X\rangle =0$ and $\|W\|\le 1$ involve a complementarity
  relation that, for the positive semidefinite (PSD) rank minimization problem, is exactly the PSD
  cone complementarity relation. In view of this, we call problem \eqref{rank-MPGCC} an MPGCC.

  \medskip

  Due to the presence of the nonconvex constraint
  \(
    \|X\|_*\!-\langle W,X\rangle\!=0,
  \)
  the MPGCC (\ref{rank-MPGCC}) is as difficult as the original problem \eqref{rank-min}.
  Nevertheless, it provides us a new view to tackle the difficult rank minimization problem \eqref{rank-min}.
  Since numerically it is usually more convenient to handle nonconvex objective functions than
  to handle nonconvex constraints, we are motivated to investigate the following penalized problem
  of the MPGCC (\ref{rank-MPGCC}):
  \begin{align}\label{rank-penalty}
   &\min_{X,W\in\mathbb{R}^{n_1\times n_2}}{\textstyle\sum_{i=1}^{n_1}}\phi(\sigma_i(W))+\rho(\|X\|_*\!-\!\langle W,X\rangle)\nonumber\\
   &\qquad{\rm s.t.}\quad\ \|\mathcal{A}(X)\!-\!b\|\le \delta,\, X\in\Omega,\,\|W\|\leq 1.
  \end{align}
  Next we shall verify that \eqref{rank-penalty} is an exact penalty version for (\ref{rank-MPGCC})
  in the sense that there exists a constant $\overline{\rho}>0$ such that the global
  optimal solution set of (\ref{rank-penalty}) associated to any $\rho>\overline{\rho}$
  coincides with that of (\ref{rank-MPGCC}). To the best of our knowledge,
  there are only a few works devoted to  mathematical programs with
  matrix cone complementarity constraints \cite{Ding13,WZZhang14}, which mainly focus on
  the optimality conditions, but not the exact penalty conditions.
 \begin{theorem}\label{epenalty-minrank}
  Let the optimal value of \eqref{rank-min} be $r>0$. Then, there exists a constant $\alpha>0$
  such that $\sigma_r(X)\ge\alpha$ for all $X\in\mathcal{F}$, where $\mathcal{F}$ is
  the feasible set of \eqref{rank-min}, and for $\phi\in\Phi$ the global optimal solution
  set of \eqref{rank-penalty} associated to any $\rho>\frac{\phi_{-}'(1)}{\alpha}$ is
  the same as that of (\ref{rank-MPGCC}).
 \end{theorem}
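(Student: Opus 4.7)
The plan is to establish the two claims in turn: the existence of $\alpha$ by a compactness argument, and the exact penalty property by bounding the penalized objective from below by $\phi(1)r$ via von Neumann's trace inequality and a one-variable convexity estimate, then matching this lower bound at any global optimum of \eqref{rank-MPGCC}.

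For the first claim I would note that $\mathcal{F}$ is a nonempty closed subset of the compact set $\Omega$, hence compact, and that every $X\in\mathcal{F}$ has ${\rm rank}(X)\ge r$ since the minimum rank is $r$, so $\sigma_r(X)>0$ on $\mathcal{F}$. Continuity of the map $X\mapsto\sigma_r(X)$ together with compactness of $\mathcal{F}$ then forces the infimum $\alpha:=\min_{X\in\mathcal{F}}\sigma_r(X)$ to be positive. For the second claim, fix $\rho>\phi'_-(1)/\alpha$ and any $(X,W)$ feasible to \eqref{rank-penalty}. Setting $w_i:=\sigma_i(W)$ and $x_i:=\sigma_i(X)$, von Neumann's trace inequality together with $\|W\|\le 1$ gives $\|X\|_*-\langle W,X\rangle\ge\sum_{i=1}^{n_1}(1-w_i)x_i\ge 0$, and using $x_i\ge\alpha$ for $i\le r$ and $x_i\ge 0$ for $i>r$, the penalized objective is bounded below by
\[
  \sum_{i=1}^{r}\bigl[\phi(w_i)+\rho\alpha(1-w_i)\bigr]+\sum_{i=r+1}^{n_1}\phi(w_i).
\]
The crux is the scalar inequality $g(w):=\phi(w)+\rho\alpha(1-w)\ge\phi(1)$ on $[0,1]$; this holds because $\phi'_-$ is nondecreasing on $[0,1]$ by convexity, so $g'_-(w)=\phi'_-(w)-\rho\alpha\le\phi'_-(1)-\rho\alpha<0$, making $g$ strictly decreasing with minimum $g(1)=\phi(1)$. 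Combined with $\phi(w_i)\ge\phi(t^*)=0$ for $i>r$, this yields the global lower bound $\phi(1)r$, which by Proposition~\ref{equiv-prop} equals the optimal value of \eqref{rank-MPGCC} and is attained at every MPGCC optimum since the penalty vanishes there.

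To show the optimal solution sets coincide, I would unpack the equality conditions in the chain above. Strict monotonicity of $g$ forces $w_i=1$ for $i\le r$; the fact that $\phi(t^*)=0$ is the minimum of $\phi$ on $[0,1]$ forces $\phi(w_i)=0$ for $i>r$, and hence $w_i=t^*\ne 1$; combined with the tightness of the nonnegative term $(1-w_i)x_i$ for $i>r$, this forces $x_i=0$ there and so ${\rm rank}(X)=r$; and the tightness of the von Neumann bound then gives $\|X\|_*-\langle W,X\rangle=\sum_i(1-w_i)x_i=0$. Consequently every optimizer of \eqref{rank-penalty} is feasible to \eqref{rank-MPGCC} with objective value $\phi(1)r$, and so is MPGCC-optimal; the reverse inclusion is immediate because the penalty term vanishes at any MPGCC optimum. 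I expect the main subtlety to be unwinding the simultaneous equality conditions along the inequality chain and verifying that $g'_-$ stays strictly negative throughout $[0,1]$ for the chosen $\rho$ — the compactness argument and the invocation of the von Neumann bound are essentially routine.
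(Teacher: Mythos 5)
Your proposal is correct and follows essentially the same route as the paper: a compactness argument for $\alpha$ (you take the minimum directly where the paper argues by contradiction with a sequence $\sigma_r(X^k)\to 0$), and then von Neumann's trace inequality combined with the observation that $t\mapsto\phi(t)+\rho a(1-t)$ is strictly decreasing on $[0,1]$ once $\rho a>\phi'_-(1)$, followed by unwinding the equality conditions to get $w_i=1$ for $i\le r$, $w_i=t^*<1$ and $x_i=0$ for $i>r$, hence MPGCC-feasibility and optimality. The only cosmetic differences are that you lower-bound the penalized objective by $\phi(1)r$ over the whole feasible set (replacing $\sigma_i(X)$ by $\alpha$) rather than sandwiching it at a penalty optimum as the paper does; both are sound.
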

 \begin{proof}
  Suppose that there exists a sequence $\{X^k\}\subset\mathcal{F}$ such that
  $\sigma_{r}(X^k)\rightarrow 0$. Notice that $\{X^k\}$ is bounded since $\mathcal{F}$ is bounded.
  Let $\widehat{X}$ be an accumulation point of $\{X^k\}$.
  By the closedness of $\mathcal{F}$ and the continuity of $\sigma_{r}(\cdot)$,
  $\widehat{X}\in \mathcal{F}$ and $\sigma_{r}(\widehat{X})=0$, so that ${\rm rank}(\widehat{X})\le r-1$.
  This is a contradiction which establishes the existence of $\alpha$.

  \medskip

  We denote by $\mathcal{S}$ and $\mathcal{S}^*$   the feasible set
  and the global optimal solution set of (\ref{rank-MPGCC}), respectively.  For any given $\rho>0$,
  let $\mathcal{S}_{\rho}$ and $\mathcal{S}_{\rho}^*$ be the feasible set and
  the global optimal solution set of the corresponding penalty problem (\ref{rank-penalty}), respectively.
  By the first part of our conclusion, there exists a constant $\alpha>0$ such that
  $\sigma_{r}(X)\ge \alpha$ for all $X\in\mathcal{F}$. Let $\rho$ be an arbitrary constant
  with $\rho>{\phi_{-}'(1)}/{\alpha}$. Then, for any $X\in\mathcal{F}$ and each $i\in\{1,\ldots,r\}$,
  \begin{equation}\label{phi1-r}
  \{1\}={\textstyle\mathop{\arg\min}_{t\in[0,1]}}\big\{\phi(t) +\rho\sigma_i(X)(1-t)\big\}.
  \end{equation}
  First we verify that each $(X^*,W^*)\in\mathcal{S}_{\rho}^*$ satisfies
  \(
    \|X^*\|_*-\langle W^*,X^*\rangle=0
  \)
  and ${\rm rank}(X^*)=r$. Indeed, since $\mathcal{S}^*\subset \mathcal{S}\subset\mathcal{S}_{\rho}$
  and $r\phi(1)$ is the optimal value of  problem \eqref{rank-MPGCC}, it holds that
  \begin{equation}\label{temp-ineq1}
  r\phi(1)\geq {\textstyle\sum_{i=1}^{n_1}}\phi(\sigma_i(W^*)) +\rho(\|X^*\|_*-\langle W^*,X^*\rangle).
  \end{equation}
  In addition, from \cite[Equation (3.3.25)]{HJ91}, it follows that
  \begin{align*}
   {\textstyle\sum_{i=1}^{n_1}}\phi(\sigma_i(W^*)) +\!\rho(\|X^*\|_*\!-\!\langle W^*,X^*\rangle)
   &\geq{\textstyle\sum_{i=1}^{n_1}}\big[\phi(\sigma_i(W^*)) +\rho\sigma_i(X^*)(1-\sigma_i(W^*))\big]\nonumber\\
   &\geq {\textstyle\sum_{i=1}^{r}}\big[\phi(\sigma_i(W^*)) +\rho\sigma_i(X^*)(1-\sigma_i(W^*))\big]\nonumber\\
   &\geq {\textstyle\sum_{i=1}^{r}}\min_{t\in[0,1]}\big[\phi(t)\! +\!\rho\sigma_i(X^*)(1\!-\!t)\big]= r\phi(1),
  \end{align*}
  where the second inequality is by the nonnegativity of $\phi(\sigma_i(W^*))$ and
  $\sigma_i(X^*)(1-\sigma_i(W^*))$ for all $i$, and the last one is due to \eqref{phi1-r}.
  Together with \eqref{temp-ineq1}, we obtain that
  \begin{align*}
  {\textstyle\sum_{i=1}^{n_1}}\phi(\sigma_i(W^*))+\!\rho(\|X^*\|_*\!-\!\langle W^*,X^*\rangle)
   &\!=\!{\textstyle \sum_{i=1}^{r}}\big[\phi(\sigma_i(W^*)) +\rho\sigma_i(X^*)(1-\sigma_i(W^*))\big]\nonumber\\
   &=\!{\textstyle\sum_{i=1}^{r}}\min_{t\in[0,1]}\big[\phi(t)\! +\!\rho\sigma_i(X^*)(1\!-\!t)\big]= r\phi(1).
  \end{align*}
  This, along with \eqref{phi1-r}, implies that $\sigma_i(W^*)=1$ for $i=1,\ldots,r$.
 Substituting $\sigma_i(W^*)=1$ for $i=1,\ldots,r$ into the last equation and using
 the nonnegativity of $\phi$ in $[0,1]$, we deduce that $\sum_{i=r+1}^{n_1}\phi(\sigma_i(W^*))=0$
 and $\|X^*\|_*=\langle W^*,X^*\rangle=\langle \sigma(X^*), \sigma(W^*)\rangle$.
 This means that $\sigma_i(W^*)=t^*$ for $i=r+\!1,\ldots,n_1$
 and ${\rm rank}(X^*)= r$, where  $t^*<1$ is defined in (\ref{phi-assump}).
 Then, $\mathcal{S}_{\rho}^*\subset \mathcal{S}$ and $\sum_{i=1}^{n_1}\phi(\sigma_i(W^*))=r\phi(1)$
 for $(X^*,W^*)\in \mathcal{S}_{\rho}^*$. Since the global optimal value of \eqref{rank-MPGCC} is $r\phi(1)$,
 we have $\mathcal{S}_{\rho}^*\subseteq \mathcal{S}^*$. For the reverse inclusion,
 let $(X^*,W^*)$ be an arbitrary point from $\mathcal{S}^*$.
 Then $(X^*,W^*)\in \mathcal{S}_{\rho}$ and $\sum_{i=1}^{n_1}\phi(\sigma_i(W^*))=r\phi(1)$.
 While the optimal value of \eqref{rank-penalty} is $r\phi(1)$ by the last equation.
 Thus, we get $S^*\subseteq\mathcal{S}^*_{\rho}$.
 \end{proof}

  Theorem \ref{epenalty-minrank} extends the exact penalty result of \cite[Theorem 3.3]{BiPan14}
  for the zero-norm minimization problem to the matrix setting, and further develops
  the exact penalty result of the rank-constrained minimization problems in \cite[Theorem 3.1]{BiPan16}.
  Observe that the objective function of problem (\ref{rank-penalty}) is globally Lipschitz continuous
  over its feasible set. Combining Theorem \ref{epenalty-minrank} with Proposition \ref{equiv-prop},
  we conclude that the rank minimization problem \eqref{rank-min} is equivalent to the Lipschitz
  optimization problem \eqref{rank-penalty}.

  \section{A multi-stage convex relaxation approach}\label{sec3}

  In the last section, we prove that the rank minimization problem \eqref{rank-min}
  is equivalent to a single penalty problem (\ref{rank-penalty}). This penalty problem
  depends on the parameter $\alpha$, the lower bound for the $r$th largest singular value of
  all $X\in\mathcal{F}$, which can be difficult to estimate. This means that a sequence
  of penalty problems of the form (\ref{rank-penalty}) with non-decreasing $\rho$ should be solved
  so as to target achieving a global optimal solution of \eqref{rank-min}. The penalty problem
  \eqref{rank-penalty} associated to a given $\rho>0$ is not globally solvable due to
  the nonconvexity of the objective function. However, it becomes a nuclear semi-norm
  minimization with respect to $X$ if the variable $W$ is fixed and has a closed form solution
  of $W$ (as will be shown later) if the variable $X$ is fixed. This motivates us to
  propose a multi-stage convex relaxation approach to \eqref{rank-min} by solving
  a single penalty problem (\ref{rank-penalty}) in an alternating way.

  \bigskip
  \setlength{\fboxrule}{0.5pt}
  \noindent
  \fbox{
  \parbox{0.96\textwidth}
  {
 \begin{algorithm} \label{Alg1}({\bf A multi-stage convex relaxation approach})
 \begin{description}
 \item[(S.0)] Choose a function $\phi\in\Phi$. Let $W^0:=0$ and set $k:=1$.

 \item[(S.1)] Solve the following nuclear semi-norm minimization problem
              \begin{equation}\label{subprob-X}
                 X^{k}\in \mathop{\arg\min}_{X\in\mathbb{R}^{n_1\times n_2}}
                 \big\{\|X\|_*\!-\!\langle W^{k-1},X\rangle\!:\ \|\mathcal{A}(X)\!-b\|\le \delta,\ X\in\Omega\big\}.
              \end{equation}
              \hspace*{0.1cm} If $k=1$, select a suitable $\rho_1>0$ and go to Step (S.3); else go to Step (S.2).

  \item[(S.2)] Select a suitable ratio factor $\mu_k\ge 1$ and
               set $\rho_{k}:=\mu_k\rho_{k-1}$.

  \item[(S.3)] Solve the following minimization problem
               \begin{equation}\label{subprob-W}
                W^{k}\in \mathop{\arg\min}_{W\in\mathbb{R}^{n_1\times n_2}}
                \big\{{\textstyle\sum_{i=1}^{n_1}}\phi(\sigma_i(W))\!-\rho_k\langle W,X^k\rangle\!:\ \|W\|\le 1\big\}.
               \end{equation}
  \item[(S.4)] Let $k\leftarrow k+1$, and then go to Step (S.1).
 \end{description}
 \end{algorithm}}
 }

  \bigskip

  The subproblem (\ref{subprob-X}) corresponds to the penalty problem (\ref{rank-penalty}) associated
  to $\rho_{k-1}$ with the variable $W$ fixed to $W^{k-1}$. Since the set $\Omega$ is assumed to be compact,
  its solution $X^{k}$ is well defined. Assume that $X^{k}$ has the SVD as
  $U^{k}[{\rm Diag}(\sigma(X^{k}))\ \ 0](V^{k})^{\mathbb{T}}$. By \cite[Eq.(3.3.25)]{HJ91},
  it is easy to check that if $z^*$ is optimal to the convex minimization
  \begin{equation}\label{zmin-prob}
    \min_{z\in\mathbb{R}^{n_1}}\left\{{\textstyle\sum_{i=1}^{n_1}}\psi(z_i)-\rho\langle z,\sigma(X^k)\rangle\right\},
  \end{equation}
  then $Z^*=U^{k}[{\rm Diag}(z^{*})\ \ 0](V^{k})^{\mathbb{T}}$ is a global optimal solution to \eqref{subprob-W};
  and conversely if $W^*$ is globally optimal to \eqref{subprob-W}, then $\sigma(W^*)$ is an optimal solution
  to \eqref{zmin-prob}. Write
  \begin{equation}\label{Wk}
    W^{k}:= U^{k}[{\rm Diag}\big(w_1^{k},w_2^{k},\ldots,w_{n_1}^{k})\ \ 0] (V^{k})^{\mathbb{T}}\ \ {\rm with}\ \
     w_i^{k}\in\partial\psi^*(\rho_{k}\sigma_i(X^{k})).
  \end{equation}
  Then, together with \cite[Theorem 23.5]{Roc70}, it follows that such $W^k$ is optimal to
  the subproblem (\ref{subprob-W}). This means that the main computational work of Algorithm \ref{Alg1}
  consists of solving a sequence of subproblems (\ref{subprob-X}). Unless otherwise stated,
  in the sequel we choose $w_i^k=w_j^k$ when $\sigma_i(X^{k})=\sigma_j(X^{k})$,
  which ensures that $1\ge w_1^k\ge\cdots\ge w_{n_1}^k\ge 0$.

  \medskip

   Since $\| W^{k-1}\|\leq 1$, the function $\|\cdot\|_*-\langle W^{k-1},\cdot\rangle$ defines
  a semi-norm over $\mathbb{R}^{n_1\times n_2}$. So, the subproblem (\ref{subprob-X}) is
  a nuclear semi-norm minimization problem. When $k=1$, it reduces to the nuclear norm minimization problem,
  i.e., the first stage of Algorithm \ref{Alg1} is exactly the nuclear norm convex relaxation.
  It should be emphasized that Algorithm \ref{Alg1} is different from the reweighted trace norm
  minimization method \cite{Fazel03,Mohan10new} and the iterative reweighted algorithm
  \cite{LXW13}. The former is proposed from the primal and dual viewpoint by solving
  an equivalent Lipschitz reformulation in an alternating way,
  whereas the latter is proposed from the primal viewpoint by relaxing a smooth nonconvex surrogate
  of \eqref{rank-min}.

  \medskip

  To close this section, we illustrate the choice of $w_i^k$ in formula \eqref{Wk}
  with two $\phi\in\Phi$.
 \begin{example}\label{example1}
  Let $\phi_1(t)= t$ for $t\in\mathbb{R}$. Clearly, $\phi_1\in \Phi$ with $t^*=0$.
  Moreover, for the function $\psi_1$ defined by \eqref{phi-psi} with $\phi_1$,
  an elementary calculation yields that
  \begin{equation}\label{hard-subdiff}
    \psi_1^*(s)
    =\left\{\!\begin{array}{cl}
                      s\!-\!1 & \textrm{if}\ s>1;\\
                      0 & \textrm{if}\ s\leq 1
                \end{array}\right.\ {\rm and}\ \
     \partial\psi_1^*(s)=\left\{\begin{array}{cl}
                      \{1\} & \textrm{if}\ s>1;\\
                      \,[0,1] & \textrm{if}\ s=1;\\
                      \{0\} & \textrm{if}\ s<1.\\
                \end{array}\right.
  \end{equation}
  Thus, one may choose
  \(
    w_i^{k}\!=\!\left\{\begin{array}{ll}
                      1 & \textrm{if}\ \sigma_i(X^{k})\ge \frac{1}{\rho_{k}};\\
                      0 & \textrm{otherwise}
                \end{array}\right.
  \) for the matrix $W^k$ in formula (\ref{Wk}).
 \end{example}
 \begin{example}\label{example2}
  Let $\phi_2(t)=-t-\frac{q-1}{q}(1-t+\epsilon)^{\frac{q}{q-1}}+\epsilon+\frac{q-1}{q}$
  for $t\in(-\infty,1+\epsilon)$ with $0<q<1$, where $\epsilon\in(0,1)$ is a constant.
  One can check that $\phi_2\in \Phi$ with $t^*=\epsilon$. For the function $\psi_2$
  defined by the equation \eqref{phi-psi} with $\phi_2$, an elementary calculation yields that
 \[
  \partial\psi_2^*(s)=\left\{\!\begin{array}{cl}
                      \{1\} & \textrm{if}\ s\geq \epsilon^{\frac{1}{q-1}}-1;\\
                      \{1+\epsilon-(s\!+\!1)^{q-1}\} & \textrm{if}\ (1\!+\!\epsilon)^{\frac{1}{q-1}}\!-\!1<s<\!\epsilon^{\frac{1}{q-1}}\!-\!1;\\
                      \{0\} & \textrm{if}\ s\leq (1\!+\!\epsilon)^{\frac{1}{q-1}}-1.
                \end{array}\right.
  \]
  Hence, one may take
  \(
   w_i^{k}=\min\big[1+\epsilon-(\rho_{k}\sigma_i(X^{k})\!+\!1)^{q-1},1\big]
  \)
  for the matrix $W^k$ in (\ref{Wk}),
 \end{example}

  \begin{remark}
  A constant $\epsilon\in(0,1)$ is introduced in the function $\phi_2$ so as to ensure that
  $(\phi_2)_{-}'(1)<+\infty$, and then problem (\ref{rank-penalty}) is a global exact penalization
  of (\ref{rank-min}). Thus, once $(\widehat{X},\widehat{W})$ yielded by Algorithm \ref{Alg1}
  satisfies $\|X\|_*\!-\!\langle X,W\rangle=0$, $\widehat{X}$ is at least a local minimum of
  the problem (\ref{rank-min}) since each feasible solution of \eqref{rank-min} is locally optimal.
  \end{remark}

 \section{Theoretical guarantees of Algorithm \ref{Alg1}}\label{sec4}

 In this section, we shall provide the theoretical guarantees of Algorithm \ref{Alg1} under
 a mild condition for the restricted eigenvalues of $\mathcal{A}^*\mathcal{A}$, which is stated as follows.
  \begin{assumption}\label{assump}
   There exist  a constant $c\in[0,\sqrt{2})$ and an integer $s\in[1,\frac{n-2r}{2}]$ such that
   $\frac{\vartheta_{+}(s)}{\vartheta_{-}(2r+2s)}\le 1+\frac{2c^2s}{r}$, where $\vartheta_{+}(\cdot)$
   and $\vartheta_{-}(\cdot)$ are the functions defined by \eqref{rhok}.
  \end{assumption}

  Assumption \ref{assump} requires the restricted eigenvalue ratio of $\mathcal{A}^*\mathcal{A}$
  to grow sublinearly in $s$. This condition, extending the sparse eigenvalue condition
  used for the analysis of sparse regularization (see \cite{Zhang09,Zhang10}),
  is weaker than the RIP condition $\delta_{4r}<\sqrt{2}-1$ used in \cite{Candes11} for $n\ge 4r$,
  where $\delta_{kr}$ is the $kr$-restricted isometry constant of $\mathcal{A}$ defined as in \cite{Candes11}.
  Indeed, from the definitions of $\vartheta_{+}(\cdot)$ and $\vartheta_{-}(\cdot)$,
  it is immediate to have that
  \[
    \frac{\vartheta_{+}(r)}{\vartheta_{-}(2r+2r)}
    \leq \frac{1+\delta_{4r}}{1-\delta_{4r}}
    <1+\frac{2\sqrt{2}\!-\!2}{2\!-\!\sqrt{2}}<1+2\times 0.843^2.
  \]
  This shows that $c=0.843$ is such that
  \(
   \frac{\vartheta_{+}(s)}{\vartheta_{-}(2r+2s)}\le 1+\frac{2c^2s}{r}
  \)
  for $s=r$. In addition, this condition is also weaker than the RIP condition
  $\delta_{3r}<2\sqrt{5}\!-\!4$ used in \cite{Mohan10new} for $n\ge 3r$, where
  $r$ is an even number or $r$ is an odd number greater than $11$. To see this,
  assume that $\delta_{3r}<2\sqrt{5}\!-\!4$, and $r$ is an even number or
  is an odd number greater than $11$. Then,
  \begin{equation}\label{relation}
    \max\left(\frac{\vartheta_{+}(r/2)}{\vartheta_{-}(2r\!+\!r)},\frac{\vartheta_{+}((r\!-\!1)/2)}{\vartheta_{-}(2r\!+\!r\!-\!1)}\right)
    \leq \frac{1+\delta_{r/2}}{1-\delta_{3r}}
    \le \frac{1+\delta_{3r}}{1-\delta_{3r}}<1+\frac{4\sqrt{5}\!-\!8}{5\!-\!2\sqrt{5}}.
  \end{equation}
  So, $c=1.34$ and $1.403$ are respectively such that
  \(
   \frac{\vartheta_{+}(s)}{\vartheta_{-}(2r+2s)}\le 1+\frac{2c^2s}{r}
  \)
  for $s=\frac{r}{2}$ and $\frac{r-1}{2}$.

  \medskip

  In the sequel, we let $\overline{X}$ have the SVD as $\overline{U}[{\rm Diag}(\sigma(\overline{X}))\ \ 0]\overline{V}^\mathbb{T}$,
  where $\overline{U}=[\overline{U}_1\ \ \overline{U}_2]\in\mathbb{O}^{n_1}$
  and $\overline{V}=[\overline{V}_1\ \ \overline{V}_2]\in\mathbb{O}^{n_2}$ with $\overline{U}_1\in\mathbb{O}^{n_1\times r}$
  and $\overline{V}_1\in\mathbb{O}^{n_2\times r}$ for $r={\rm rank}(\overline{X})$,
  and write $\mathcal {T}=\mathcal{T}(\overline{X})$ where $\mathcal{T}(\overline{X})$ is the tangent space
  at $\overline{X}$ associated to the rank constraint ${\rm rank}(X)\le r$ (see equation \eqref{Tangent-space}
  for its definition). For convenience, for $k=1,2,\ldots$, let
  \begin{equation}\label{gammak+g}
    \gamma_{k-1}:=\frac{\|\mathcal{P}_{\mathcal {T}}(W^{k-1}-\overline{U}_1\overline{V}_1^\mathbb{T})\|_F}{\sqrt{2r}
    (1-\|\mathcal{P}_{\mathcal{T}^{\perp}}(W^{k-1})\|)}.
  \end{equation}
  The proofs of all the results in the subsequent subsections are given in Appendix C.

  \subsection{Error and approximate rank bounds}\label{sec4.1}

  Under Assumption \ref{assump}, when $\gamma_{k-1}\in[0,1/c)$ for some $k\ge 1$,
  we can establish the following error bound and approximate rank bound for
  the solution $X^k$ of the $k$th subproblem.
  \begin{proposition}\label{prop1-sec41}
   Suppose that Assumption \ref{assump} holds and $0\le\gamma_{k-1}<{1}/{c}$ for some $k\!\ge 1$. Then
   \begin{equation}\label{noisyless-bound}
     \big\|X^{k}\!-\!\overline{X}\big\|_F
     \le \Xi(\gamma_{k-1})\ \ {\rm and}\ \
     \big\|\mathcal{P}_{\mathcal {T}^{\perp}}(X^{k})\big\|_*
     \le\Gamma(\gamma_{k-1}),
   \end{equation}
   where $\Xi\!:[0,1/c)\to\mathbb{R}_{+}$ and $\Gamma\!:[0,1/c)\to\mathbb{R}_{+}$ are
   the increasing functions defined by
   \[
      \Xi(t):=\frac{2\delta\sqrt{\vartheta_{+}(2r\!+\!s)}}{\vartheta_{-}(2r\!+\!s)}\cdot\frac{1}{1\!-\!ct}\sqrt{1\!+\!\frac{rt^2}{2s}}
     \ \ {\rm and}\ \ \Gamma(t):=\frac{2\delta\sqrt{\vartheta_{+}(2r\!+\!s)}}{\vartheta_{-}(2r\!+\!s)}\cdot\frac{\sqrt{2r}t}{1\!-\!ct}.
   \]
 \end{proposition}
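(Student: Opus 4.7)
The plan is to extract a cone-type restriction on $H:=X^k-\overline{X}$ from the optimality of $X^k$ in~(\ref{subprob-X}), and then convert it into the stated Frobenius and nuclear-norm error bounds using the restricted-eigenvalue hypothesis in Assumption~\ref{assump}. Throughout I would use $\overline{X}\in\mathcal{T}$ (immediate from the SVD of $\overline{X}$), which in particular gives $\mathcal{P}_{\mathcal{T}^\perp}(X^k)=\mathcal{P}_{\mathcal{T}^\perp}(H)$, so that the approximate-rank bound on $X^k$ reduces to one on $\mathcal{P}_{\mathcal{T}^\perp}(H)$.

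For the cone condition, since $\|\mathcal{A}\overline{X}-b\|=\|\xi\|\le\delta$ makes $\overline{X}$ feasible, optimality of $X^k$ yields $\|X^k\|_*-\langle W^{k-1},X^k\rangle\le \|\overline{X}\|_*-\langle W^{k-1},\overline{X}\rangle$. I would combine this with the nuclear-norm subgradient inequality $\|X^k\|_*\ge \|\overline{X}\|_*+\langle \overline{U}_1\overline{V}_1^{\mathbb{T}}+E,H\rangle$, where $E\in\mathcal{T}^\perp$ has spectral norm one and is chosen by duality so that $\langle E,H\rangle=\|\mathcal{P}_{\mathcal{T}^\perp}(H)\|_*$. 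Rearranging and decomposing $W^{k-1}-\overline{U}_1\overline{V}_1^{\mathbb{T}}$ across $\mathcal{T}$ and $\mathcal{T}^\perp$ (since $\overline{U}_1\overline{V}_1^{\mathbb{T}}\in\mathcal{T}$), applying Cauchy--Schwarz on the tangent piece and the pairing $|\langle A,B\rangle|\le\|A\|\cdot\|B\|_*$ on the normal piece, and dividing by the positive quantity $1-\|\mathcal{P}_{\mathcal{T}^\perp}(W^{k-1})\|$ (positivity being implicit in $\gamma_{k-1}<1/c$ via the definition in~(\ref{gammak+g})), I expect to arrive at the cone inequality $\|\mathcal{P}_{\mathcal{T}^\perp}(H)\|_*\le \sqrt{2r}\,\gamma_{k-1}\,\|\mathcal{P}_{\mathcal{T}}(H)\|_F$.

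The second step is a standard SVD-partition argument. Set $M_0:=\mathcal{P}_{\mathcal{T}}(H)$ (rank at most $2r$) and split the SVD of $\mathcal{P}_{\mathcal{T}^\perp}(H)$ into successive rank-$s$ blocks $M_1,M_2,\ldots$ ordered by decreasing singular value. The decay lemma $\|M_{j+1}\|_F\le \|M_j\|_*/\sqrt{s}$ together with the cone inequality gives the tail estimate $\sum_{j\ge 2}\|M_j\|_F\le \gamma_{k-1}\sqrt{2r/s}\,\|M_0\|_F$. Using $\mathcal{A}(M_0+M_1)=\mathcal{A}H-\sum_{j\ge 2}\mathcal{A}M_j$, the noise bound $\|\mathcal{A}H\|\le 2\delta$, the restricted-eigenvalue sandwich $\vartheta_-(2r+s)\|M_0+M_1\|_F^2\le \|\mathcal{A}(M_0+M_1)\|^2\le \vartheta_+(2r+s)\|M_0+M_1\|_F^2$ (valid since $\mathrm{rank}(M_0+M_1)\le 2r+s$), the monotonicity $\vartheta_-(2r+s)\ge\vartheta_-(2r+2s)$, and the ratio bound from Assumption~\ref{assump}, the coupling collapses into a self-referential inequality of the form $(1-c\gamma_{k-1})\|M_0\|_F\le 2\delta\sqrt{\vartheta_+(2r+s)}/\vartheta_-(2r+s)$. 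The Frobenius bound $\|H\|_F\le \Xi(\gamma_{k-1})$ then follows from the Pythagorean identity $\|H\|_F^2=\|M_0+M_1\|_F^2+\sum_{j\ge 2}\|M_j\|_F^2$ together with the tail estimate, yielding the $\sqrt{1+rt^2/(2s)}$ factor; and the approximate-rank bound $\|\mathcal{P}_{\mathcal{T}^\perp}(X^k)\|_*\le \Gamma(\gamma_{k-1})$ follows by plugging the $\|M_0\|_F$ estimate back into the cone inequality.

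The main obstacle will be the constant bookkeeping in the last step: tracking $\vartheta_\pm$ at the three rank levels $s$, $2r+s$, and $2r+2s$ so that the single ratio hypothesis in Assumption~\ref{assump} absorbs every stray factor into the clean prefactor $1-c\gamma_{k-1}$. The appearance of $\sqrt{\vartheta_+(2r+s)}$ (rather than $\sqrt{\vartheta_+(s)}$) in the numerator of $\Xi$ and $\Gamma$ suggests that $\|\mathcal{A}(M_0+M_1)\|^2$ should be bounded by factoring $\|\mathcal{A}(M_0+M_1)\|$ out of the expansion $\langle \mathcal{A}(M_0+M_1),\mathcal{A}H\rangle-\sum_{j\ge 2}\langle\mathcal{A}(M_0+M_1),\mathcal{A}M_j\rangle$ rather than via a naive triangle inequality, so that each tail cross-term inherits the rank-$(2r+s)$ upper restricted eigenvalue, producing the correct $\sqrt{\vartheta_+(2r+s)}$ prefactor.
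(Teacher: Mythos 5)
Your overall architecture matches the paper's: the cone inequality $\|\mathcal{P}_{\mathcal{T}^\perp}(H)\|_*\le\sqrt{2r}\,\gamma_{k-1}\|\mathcal{P}_{\mathcal{T}}(H)\|_F$ obtained from optimality plus the nuclear-norm subgradient inequality is exactly Lemma \ref{lemma1-sec41}; your $M_0+M_1$ is the paper's $\mathcal{P}_{\mathcal{M}^k}(\Delta^k)$ with $\mathcal{M}^k=\mathcal{T}\oplus\mathcal{H}^k$; and the Pythagorean/tail step is Lemma \ref{lemma2-sec41}. The gap is in the one step you only gesture at: how the tail cross-terms $\langle\mathcal{A}(M_0+M_1),\mathcal{A}M_j\rangle$ are controlled. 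The mechanism you describe --- factor out $\|\mathcal{A}(M_0+M_1)\|$ so that ``each tail cross-term inherits the rank-$(2r+s)$ upper restricted eigenvalue'' --- is Cauchy--Schwarz, and it gives $|\langle\mathcal{A}(M_0+M_1),\mathcal{A}M_j\rangle|\le\sqrt{\vartheta_{+}(2r+s)\vartheta_{+}(s)}\,\|M_0+M_1\|_F\|M_j\|_F$. After summing against your tail estimate, the coefficient multiplying $\gamma_{k-1}$ in the would-be self-referential inequality is at least $\frac{\sqrt{\vartheta_{+}(2r+s)\vartheta_{+}(s)}}{\vartheta_{-}(2r+s)}\sqrt{2r/s}\ \ge\ \sqrt{2r/s}$, which already equals $\sqrt{2}$ at $s=r$ and in no case can be driven down to the constant $c$ of Assumption \ref{assump} (which may be, say, $0.3$). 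So $(1-c\gamma_{k-1})\|M_0\|_F\le 2\delta\sqrt{\vartheta_{+}(2r+s)}/\vartheta_{-}(2r+s)$ does not follow from this route, and the ratio hypothesis of Assumption \ref{assump} is never actually brought to bear.

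The missing ingredient is a restricted-orthogonality estimate: because $M_0+M_1$ and each $M_j$ ($j\ge2$) lie in mutually orthogonal subspaces, a polarization argument bounds the cross term not by $\sqrt{\vartheta_{+}(2r+s)\vartheta_{+}(s)}$ but by a multiple of $\sqrt{\vartheta_{+}(s)/\vartheta_{-}(2r+2s)-1}$; this is the quantity $\pi(2r+s,s)$ of Lemma \ref{lemma1-reig} (the matrix analogue of Zhang's Proposition 3.1), which the paper deploys through Lemma \ref{lemma2-reig}. Under Assumption \ref{assump} one gets $\pi(2r+s,s)/s\le c/\sqrt{2r}$, and combining this with the cone inequality turns the entire tail contribution into exactly $c\gamma_{k-1}\|\mathcal{P}_{\mathcal{T}}(H)\|_F$, whence the prefactor $1-c\gamma_{k-1}$. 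The ``$-1$'' inside that square root --- i.e.\ the closeness of $\vartheta_{+}$ to $\vartheta_{-}$, not their product --- is what makes the argument close; without it the proof cannot be completed. A secondary, minor point: bounding $\bigl(\sum_{j\ge2}\|M_j\|_F^2\bigr)^{1/2}$ by $\sum_{j\ge2}\|M_j\|_F$ loses a factor of $4$ inside the square root relative to the stated $\sqrt{1+rt^2/(2s)}$; the paper obtains the sharper constant via $\|Z\|_F^2\le\|Z\|\,\|Z\|_*$ together with $ab\le(a+b)^2/4$.
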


 \begin{remark}\label{remark-bound}
  {\bf(a)} Since $\|\mathcal{P}_{\mathcal {T}^{\perp}}(X^{k})\|_*\!=0$ implies that ${\rm rank}(X^k)\!\le 2r$,
  it is reasonable to view $\|\mathcal{P}_{\mathcal {T}^{\perp}}(X^{k})\|_*$ as a measure for
  the approximate rank of $X^k$. So, the second inequality in \eqref{noisyless-bound}
  provides an approximate rank bound for $X^k$. The error and approximate rank bounds
  in \eqref{noisyless-bound} consist of two parts: one part is the statistical error
  $\Xi(0)=\frac{2\delta\sqrt{\vartheta_{+}(2r\!+\!s)}}{\vartheta_{-}(2r\!+\!s)}$
  from the noise and the operator $\mathcal{A}$, and the other part is the estimation error from $\gamma_{k-1}$.

  \medskip
  \noindent
  {\bf(b)} Since $W^0=0$, we have $\gamma_0=\frac{1}{\sqrt{2r}}\|\overline{U}_1\overline{V}_1^\mathbb{T}\|_F
  =\!\frac{1}{\sqrt{2}}<\frac{1}{c}$. Hence, under Assumption \ref{assump},
  the error and approximate rank bounds of the nuclear norm convex relaxation are
  \begin{equation}\label{Ebound-Nnorm}
   \big\|X^1\!-\!\overline{X}\big\|_F
   \le\Xi(\gamma_0)=\Xi(1/\sqrt{2})\ \ {\rm and}\ \
   \big\|\mathcal{P}_{\mathcal {T}^{\perp}}(X^1)\big\|_*
   \le\Gamma(\gamma_0)=\Gamma(1/\sqrt{2}).
  \end{equation}
  Moreover, if Assumption \ref{assump} is satisfied with $s=r/2$ and
  $c<\sqrt{2}-\frac{2(1-\delta_{3r}(1+{\sqrt{5}}/{2}))}{\sqrt{{3}}(1-\delta_{3r})}$
  for $\delta_{3r}<2\sqrt{5}\!-\!4$, then the error bound $\Xi(\gamma_{0})$ is tighter than
  the bound
  \(
    \frac{3\delta\sqrt{1+\delta_{3r}}}{1-\delta_{3r}(1+{\sqrt{5}}/{2})}
  \)
 given by \cite[Theorem III.1]{Mohan10new} with $C_{1,1}=1$ for the nuclear norm relaxation because
 \begin{align*}\label{comparing-ineq}
  \Xi(\gamma_{0})
  =\frac{\sqrt{\vartheta_{+}(2.5r)}\sqrt{6}\delta}{\big(1\!-\!{c}/{\sqrt{2}}\big)\vartheta_{-}(2.5r)}\leq \frac{\sqrt{1+\delta_{3r}}\sqrt{6}\delta}{\big(1\!-\!{c}/{\sqrt{2}}\big)(1-\delta_{3r})}
     <\frac{3\delta\sqrt{1+\delta_{3r}}}{1\!-\!\delta_{3r}(1\!+\!{\sqrt{5}}/{2})}.
 \end{align*}
 \end{remark}

 Remark \ref{remark-bound} (b) says  that under Assumption \ref{assump} the solution $X^1$ of
  the first stage convex relaxation has the error and approximate rank bounds as in \eqref{Ebound-Nnorm}.
  However, it is not clear whether $X^k\ (k\ge 2)$ has such error and approximate rank bounds or not.
  The following theorem states that if in addition $\sigma_r(\overline{X})>2\Xi(\gamma_0)$
  and $\rho_1$ and $\mu_k$ are appropriately chosen, all $X^k\ (k\ge 2)$ have the bounds as in \eqref{noisyless-bound},
  and more importantly, their error and approximate rank bounds are, respectively, smaller than those of $X^1$.
  To achieve this result, we need the sequence $\{\widetilde{\gamma}_k\}_{k\ge 1}$,
  which is defined recursively with $\widetilde{\gamma}_0=\gamma_0$ as
  \begin{subnumcases}{}\label{wabk}
   \widetilde{\gamma}_{k}
   :=\frac{\sqrt{r}(1-\widetilde{b}_k)+ (\sqrt{2}\widetilde{a}_k +1)\widetilde{\beta}_k}
     {\sqrt{2r}(1-\widetilde{a}_k)(1\!-\!\widetilde{\beta}_k^2)}\ \ {\rm with}\ \
    \widetilde{a}_k=(\psi^*)_{+}'\left[\rho_{k}\Xi(\widetilde{\gamma}_{k-1})\right],\\
   \widetilde{b}_k=(\psi^*)_{-}'\left[\rho_{k}(\sigma_r(\overline{X})\!-\!\Xi(\widetilde{\gamma}_{k-1}))\right],\
    \widetilde{\beta}_k=-\frac{1}{\sqrt{2}}\ln\Big[1-\frac{\sqrt{2}\Xi(\widetilde{\gamma}_{k-1})}{\sigma_r(\overline{X})}\Big].
    \label{wgammak}
  \end{subnumcases}

  \vspace{-0.3cm}
  \begin{theorem}\label{theorem1-sec41}
  Suppose that Assumption \ref{assump} holds and $\sigma_r(\overline{X})> 2\Xi(\gamma_0)$.
   If the parameters $\rho_1$ and $\mu_k$ are respectively chosen such that
    \(
    \widetilde{a}_1<\!\frac{(\widetilde{b}_1-\widetilde{\beta}_1^2)\sqrt{r}-\widetilde{\beta}_1}{(1-\widetilde{\beta}_1^2)\sqrt{r}+\sqrt{2}\widetilde{\beta}_1}
   \)
  and $\mu_{k}\in\big[1,\frac{\Xi(\widetilde{\gamma}_{k-2})}{\Xi(\widetilde{\gamma}_{k-1})}\big]$,
  then all $X^k\ (k\ge 1)$ satisfy the inequalities in \eqref{noisyless-bound} and for $k\ge 2$ it also holds that
  \begin{align}
   \big\|X^k\!-\!\overline{X}\big\|_F
   \le \Xi(\gamma_{k-1})\le \Xi(\widetilde{\gamma}_{k-1})<\Xi(\widetilde{\gamma}_{k-2})<\cdots<\Xi(\gamma_0),\nonumber\\
   \|\mathcal{P}_{\mathcal{T}^{\perp}}(X^{k})\|_*
    \le \Gamma(\gamma_{k-1})\le   \Gamma(\widetilde{\gamma}_{k-1})<\Gamma(\widetilde{\gamma}_{k-2})<\cdots<\Gamma(\gamma_0).\nonumber
  \end{align}
  \end{theorem}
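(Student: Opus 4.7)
The plan is to prove the theorem by induction on $k$, using Proposition \ref{prop1-sec41} as the workhorse. The base case $k=1$ is immediate from Remark \ref{remark-bound}(b): since $W^0 = 0$, one has $\gamma_0 = 1/\sqrt{2} < 1/c$, so Proposition \ref{prop1-sec41} yields the bounds in \eqref{noisyless-bound}. For the inductive step, I assume $X^{k-1}$ satisfies the claimed bounds with $\gamma_{k-2} \le \widetilde{\gamma}_{k-2} < 1/c$. Everything then hinges on establishing the recursive inequality $\gamma_{k-1} \le \widetilde{\gamma}_{k-1}$, after which a second application of Proposition \ref{prop1-sec41} (together with the monotonicity of $\Xi$ and $\Gamma$) delivers the desired conclusion.

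To prove $\gamma_{k-1} \le \widetilde{\gamma}_{k-1}$, I would proceed in three technical steps. First, Weyl's inequality combined with the inductive bound $\|X^{k-1}-\overline{X}\|_F \le \Xi(\widetilde{\gamma}_{k-2})$ gives $\sigma_i(X^{k-1}) \ge \sigma_r(\overline{X}) - \Xi(\widetilde{\gamma}_{k-2})$ for $i \le r$ and $\sigma_i(X^{k-1}) \le \Xi(\widetilde{\gamma}_{k-2})$ for $i > r$; the hypothesis $\sigma_r(\overline{X}) > 2\Xi(\gamma_0)$ keeps the former strictly positive. Second, by the monotonicity statements \eqref{dpsi-cg1}--\eqref{dpsi-cg2} for $\partial\psi^*$ and the choice $\rho_{k-1} = \mu_{k-1}\rho_{k-2}$, the formula \eqref{Wk} for $W^{k-1}$ furnishes the pointwise bounds $w_i^{k-1} \ge \widetilde{b}_{k-1}$ for $i \le r$ and $w_i^{k-1} \le \widetilde{a}_{k-1}$ for $i > r$, which are exactly the quantities in \eqref{wabk}. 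Third, a Davis--Kahan type $\sin\Theta$ bound controls the angle between the leading singular subspaces of $X^{k-1}$ and those of $\overline{X}$; the spectral gap from the hypothesis on $\sigma_r(\overline{X})$ makes Davis--Kahan applicable and produces the $\widetilde{\beta}_{k-1}$ quantity (the logarithmic form appears upon integrating the canonical-angle contributions).

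Combining these, I would split $\mathcal{P}_{\mathcal{T}}(W^{k-1}-\overline{U}_1\overline{V}_1^{\mathbb{T}})$ into a ``diagonal'' contribution reflecting deviations of $w_i^{k-1}$ from $1$ on the top block (controlled by $\sqrt{r}(1-\widetilde{b}_{k-1})$) plus an ``off-diagonal'' subspace-misalignment contribution (controlled by $(\sqrt{2}\widetilde{a}_{k-1}+1)\widetilde{\beta}_{k-1}$), and would bound the denominator $1-\|\mathcal{P}_{\mathcal{T}^{\perp}}(W^{k-1})\|$ from below by $(1-\widetilde{a}_{k-1})(1-\widetilde{\beta}_{k-1}^2)$ after isolating the tangent-space overlap. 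Plugging these into \eqref{gammak+g} yields precisely the recursion \eqref{wabk} and thus $\gamma_{k-1}\le\widetilde{\gamma}_{k-1}$. The choice of $\rho_1$ ensuring $\widetilde{a}_1 < \tfrac{(\widetilde{b}_1-\widetilde{\beta}_1^2)\sqrt{r}-\widetilde{\beta}_1}{(1-\widetilde{\beta}_1^2)\sqrt{r}+\sqrt{2}\widetilde{\beta}_1}$ is calibrated so that $\widetilde{\gamma}_1 < 1/\sqrt{2} = \widetilde{\gamma}_0$, and the constraint $\mu_k\in[1,\Xi(\widetilde{\gamma}_{k-2})/\Xi(\widetilde{\gamma}_{k-1})]$ guarantees $\rho_k\Xi(\widetilde{\gamma}_{k-1}) \le \rho_{k-1}\Xi(\widetilde{\gamma}_{k-2})$, which by the monotonicity of $(\psi^*)'_\pm$ preserves the strict decrease $\widetilde{\gamma}_k < \widetilde{\gamma}_{k-1}$ at each subsequent stage.

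The main obstacle I anticipate is the bookkeeping in the combination step above: reconciling the SVD of $W^{k-1}$, which is aligned with the singular subspaces of $X^{k-1}$, against the tangent space $\mathcal{T}$ at $\overline{X}$. Projections onto $\mathcal{T}$ and $\mathcal{T}^{\perp}$ involve $\overline{U}_1,\overline{V}_1$, whereas $W^{k-1}$ is written in terms of $U^{k-1},V^{k-1}$; cleanly separating the diagonal spectral deviation (the $(1-\widetilde{b}_{k-1})$ and $\widetilde{a}_{k-1}$ pieces) from the off-diagonal subspace rotation (the $\widetilde{\beta}_{k-1}$ piece) is the delicate calculation, and it is precisely where the specific shape of the recursion \eqref{wabk}--\eqref{wgammak} is dictated.
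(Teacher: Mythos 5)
Your proposal is correct and follows essentially the same route as the paper: induction on $k$ with Proposition \ref{prop1-sec41} as the engine, Weyl's singular value perturbation to separate the top and bottom spectra of $X^{k-1}$, the monotonicity of $\partial\psi^*$ to get $w_i^{k-1}\ge\widetilde{b}_{k-1}$ ($i\le r$) and $w_i^{k-1}\le\widetilde{a}_{k-1}$ ($i>r$), a singular-subspace perturbation bound (the paper's Lemma \ref{lemma1-A}, which is exactly your Davis--Kahan step and yields the logarithmic $\widetilde{\beta}$), and the diagonal/off-diagonal splitting of $\mathcal{P}_{\mathcal{T}}(W^{k-1}-\overline{U}_1\overline{V}_1^{\mathbb{T}})$ together with the lower bound on $1-\|\mathcal{P}_{\mathcal{T}^{\perp}}(W^{k-1})\|$ (the paper's Lemma \ref{WU1V1}). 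The bookkeeping you flag as the delicate point is precisely what Lemma \ref{WU1V1} in Appendix A carries out.
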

  \begin{remark}\label{remark42}
  {\bf (a)} Theorem \ref{theorem1-sec41} shows that under Assumption \ref{assump}
  and $\sigma_r(\overline{X})\!>\!2\Xi(\gamma_0)$, if $\rho_1$ and $\mu_k$ are
  chosen appropriately, then the error and approximate rank bounds of $X^k\ (k\ge 2)$
  improve those of $X^1$ at least by $1-\frac{\Xi(\widetilde{\gamma}_{k-1})}{\Xi(\gamma_0)}$
  and $1\!-\!\frac{\Gamma(\widetilde{\gamma}_{k-1})}{\Gamma(\gamma_0)}$, respectively.

  \medskip
  \noindent
  {\bf(b)} The choice of $\rho_1$ depends on $\Xi(\gamma_0)$. For instance, take the function $\phi_1$
  in Example \ref{example1}. If $\sigma_r(\overline{X})=\alpha\Xi(\gamma_0)$
  for $\alpha\ge 2.5$, then by virtue of the definitions of $\widetilde{a}_1,\widetilde{b}_1$ and
  $\widetilde{\beta}_1$ and equation \eqref{hard-subdiff}
  it is easy to check that $\widetilde{a}_1=0,\widetilde{b}_1=1$ and $\widetilde{\beta}_1\in[0,0.6)$,
  and consequently $(\widetilde{b}_1-\widetilde{\beta}_1^2)\sqrt{r}-\widetilde{\beta}_1>0$.
  This means that $\big(\frac{1}{(\alpha-1)\Xi(\gamma_0)},\frac{1}{\Xi(\gamma_0)}\big)$
  is the range of choice for $\rho_1$. For numerical computations, one may estimate $r$ and
  $\sigma_r(\overline{X})$ with the help of $\sigma(X^1)$.
  \end{remark}

  To close this subsection, we illustrate the ratios $\frac{\Xi(\widetilde{\gamma}_{k-1})}{\Xi(\gamma_0)}$ and
  $\frac{\Gamma(\widetilde{\gamma}_{k-1})}{\Gamma(\gamma_0)}$ by the functions $\phi_1$ and $\phi_2$
  with $q=1/2$ and $\epsilon=10^{-3}$. For this purpose, we suppose that Assumption \ref{assump} holds
  with $r=10, s\!=\!r/2$ and $\sigma_r(\overline{X})=\alpha\Xi(\gamma_0)$ for $\alpha\!\ge\! 4.5$.
  Then, for those $c$ in the first row of Table \ref{tab1}, one may compute the ratios
  $\frac{\Xi(\widetilde{\gamma}_{k-1})}{\Xi(\gamma_0)}$ and $\frac{\Gamma(\widetilde{\gamma}_{k-1})}{\Gamma(\gamma_0)}$
  as those in the last six columns of Table \ref{tab1} with $\rho_1$ chosen as
  the middle point of the interval and $\mu_k\equiv 1$. We see that the error bound of
  the first stage is reduced most in the second stage, and as the number of stages increases,
  the reduction becomes less. For Algorithm \ref{Alg1} with $\phi_1$, the reduction is close to
  the limit ${\Xi(0)}/{\Xi(\widetilde{\gamma}_0)}$ when $k=5$, but for Algorithm \ref{Alg1}
  with $\phi_2$, there is a little room for the reduction especially for those
  $\mathcal{A}^*\mathcal{A}$ with $c\ge 0.5$.

 \begin{table}[htbp]
  \begin{center}
    {\caption{\label{tab1} Reduction rate of the error bounds of the first stage in the 2nd-$5$th stage}}
 \begin{tabular}{|c|c||ccccccc|}
   \hline
  \ & $\rho_1$    & $c$ & 0 & 0.1 & 0.3 & 0.5 & 0.7 &0.9\\
  \hline
  \raisebox{2.0ex}
  \ $\phi_1$ &$\big[\frac{0.29\alpha}{\sigma_r(\overline{X})},\frac{\alpha}{\sigma_r(\overline{X})}\big)$
   & ${\Xi(\widetilde{\gamma}_1)}/{\Xi(\gamma_0)}$         & 0.819 & 0.766 & 0.658 & 0.547 & 0.433 &0.316\\
   \    & & ${\Xi(\widetilde{\gamma}_2)}/{\Xi(\gamma_0)}$  & 0.818 & 0.763 & 0.652 & 0.537 & 0.420 &0.302\\
   \     & & ${\Xi(\widetilde{\gamma}_3)}/{\Xi(\gamma_0)}$ & 0.818 & 0.763 & 0.651 & 0.536 & 0.420 &0.302\\
   \   & & ${\Xi(\widetilde{\gamma}_4)}/{\Xi(\gamma_0)}$   & 0.818 & 0.763 & 0.651 & 0.536 & 0.420 &0.302\\
   \hline
   \raisebox{2.0ex}
  \ $\phi_2$ &$\big[\frac{0.24\alpha}{\sigma_r(\overline{X})},\frac{4.42\alpha}{\sigma_r(\overline{X})}\big]$
      & ${\Xi(\widetilde{\gamma}_1)}/{\Xi(\gamma_0)}$        & 0.975  & 0.969 & 0.955 & 0.934 & 0.905 & 0.856\\
     \   &  & ${\Xi(\widetilde{\gamma}_2)}/{\Xi(\gamma_0)}$ & 0.967   & 0.958 & 0.931 & 0.888 & 0.816 & 0.689\\
      \  &   & ${\Xi(\widetilde{\gamma}_3)}/{\Xi(\gamma_0)}$ & 0.965  & 0.954 & 0.920 & 0.760 & 0.752 & 0.572\\
       \  &   & ${\Xi(\widetilde{\gamma}_4)}/{\Xi(\gamma_0)}$ & 0.965 & 0.953 & 0.915 & 0.744 & 0.714 & 0.516\\
   \hline
   \raisebox{2.0ex}
  \    &  & ${\Xi(0)}/{\Xi(\gamma_0)}$ & 0.817 & 0.759 & 0.644 & 0.528 & 0.413 & 0.297\\
   \hline
  \end{tabular}
  \end{center}
  \end{table}

 \subsection{Geometric convergence}\label{sec4.2}

  Generally speaking, because of the presence of the noise, it is impossible for the error sequence
  $\{\|X^{k}\!-\!\overline{X}\|_F\}_{k\ge 1}$ to decrease and then converge geometrically.
  However, one may establish its geometric convergence in a statistical sense
  as in the following theorem.
 \begin{theorem}\label{theorem1-sec42}
  Suppose that Assumption \ref{assump} holds and $\sigma_r(\overline{X})\!>\max(2,\sqrt{2}\!+\!\alpha)\Xi(\gamma_0)$
  for $\alpha=\frac{1+\sqrt{2}\widetilde{a}_1}{(1-\widetilde{a}_{1})(1\!-\!\widetilde{\beta}_{1}^2)\sqrt{r+4s}}$.
  If $\rho_1$ and $\mu_k$ are chosen as in Theorem \ref{theorem1-sec41}, then for $k\ge 1$,
  \begin{equation}\label{gconverge1}
   \big\|X^k\!-\!\overline{X}\big\|_F
   \le\!\frac{\Xi(0)}{1\!-\!c\widetilde{\gamma}_1}
        \Big[1+\frac{(1-\widetilde{b}_{1})\sqrt{r}}
            {2(1\!-\!\widetilde{a}_{1})(1\!-\!\widetilde{\beta}_{1}^2)\sqrt{s}}\Big]
            +\Big[\frac{\alpha\,\Xi(\gamma_0)}{\sigma_{r}(\overline{X})\!-\!\sqrt{2}\Xi(\gamma_0)}\Big]^{k-1}\big\|X^{1}\!-\!\overline{X}\big\|_F.
  \end{equation}
  \end{theorem}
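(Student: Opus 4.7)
The plan is to establish a one-step contraction of the form $\|X^k-\overline{X}\|_F \le C + B\,\|X^{k-1}-\overline{X}\|_F$, where $C$ equals the stationary bound appearing as the first term on the right-hand side of \eqref{gconverge1}, $B = \alpha\Xi(\gamma_0)/(\sigma_r(\overline{X})-\sqrt{2}\Xi(\gamma_0))$ is the stated contraction factor, and $B<1$ under the hypothesis $\sigma_r(\overline{X}) > (\sqrt{2}+\alpha)\Xi(\gamma_0)$. Iterating this recursion $k-1$ times, starting from $\|X^1-\overline{X}\|_F$, then produces the stated geometric bound \eqref{gconverge1}.

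To derive the recursion I first invoke Theorem \ref{theorem1-sec41}: the prescribed schedule for $\rho_k$ and $\mu_k$ guarantees $\|X^k-\overline{X}\|_F \le \Xi(\gamma_{k-1}) \le \Xi(\widetilde{\gamma}_{k-1})$ for every $k\ge 1$, and forces the monotonicities $\widetilde{a}_k \le \widetilde{a}_1$, $\widetilde{\beta}_k \le \widetilde{\beta}_1$, $\widetilde{b}_k \ge \widetilde{b}_1$. These come from the monotonicity of $(\psi^*)'$ in \eqref{dpsi-cg2} applied to the non-increasing sequence $\{\rho_k\Xi(\widetilde{\gamma}_{k-1})\}$ (ensured by $\mu_k\le \Xi(\widetilde{\gamma}_{k-2})/\Xi(\widetilde{\gamma}_{k-1})$) and to the non-decreasing sequence $\{\rho_k(\sigma_r(\overline{X})-\Xi(\widetilde{\gamma}_{k-1}))\}$. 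Next I revisit the estimate underlying Proposition \ref{prop1-sec41}, but keep $\|X^{k-1}-\overline{X}\|_F$ in place of its worst-case surrogate $\Xi(\widetilde{\gamma}_{k-2})$; combined with the monotonicities above this yields
\[
\gamma_{k-1} \le \frac{\sqrt{r}(1-\widetilde{b}_1)+(\sqrt{2}\widetilde{a}_1+1)\beta_{k-1}}{\sqrt{2r}\,(1-\widetilde{a}_1)(1-\widetilde{\beta}_1^2)},\qquad \beta_{k-1}:=-\tfrac{1}{\sqrt{2}}\ln\!\Big[1-\tfrac{\sqrt{2}\|X^{k-1}-\overline{X}\|_F}{\sigma_r(\overline{X})}\Big].
\]
Plugging this into $\Xi(\gamma_{k-1}) = \Xi(0)\sqrt{1+r\gamma_{k-1}^2/(2s)}/(1-c\gamma_{k-1})$, I bound the denominator using $\gamma_{k-1}\le\widetilde{\gamma}_1$ and apply $\sqrt{A^2+B^2}\le A+B$ to split the numerator into a constant piece (which after a short computation equals exactly $C$) plus a piece linear in $\beta_{k-1}$. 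Finally, $-\ln(1-x)\le x/(1-x)$ together with $\|X^{k-1}-\overline{X}\|_F\le \Xi(\gamma_0)$ gives $\beta_{k-1}\le \|X^{k-1}-\overline{X}\|_F/(\sigma_r(\overline{X})-\sqrt{2}\Xi(\gamma_0))$, and the algebraic identity $\Xi(\gamma_0)=\Xi(0)\sqrt{r+4s}/(2\sqrt{s}(1-c/\sqrt{2}))$, a direct consequence of $\gamma_0=1/\sqrt{2}$, rewrites the resulting coefficient of $\|X^{k-1}-\overline{X}\|_F$ as exactly the claimed $B$.

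The main obstacle is the second step above, namely extracting a bound on the true quantity $\gamma_{k-1}$ that depends on $\|X^{k-1}-\overline{X}\|_F$ itself rather than on its upper estimate $\Xi(\widetilde{\gamma}_{k-2})$. This requires redoing the perturbation argument underlying Proposition \ref{prop1-sec41} using the explicit formula \eqref{Wk} for $W^{k-1}$ in terms of the SVD of $X^{k-1}$, so that the subspace misalignment between $X^{k-1}$ and $\overline{X}$ is controlled by a $\sin\Theta$-type estimate that is precisely captured by the logarithmic quantity $\beta_{k-1}$. Once this refined recursion is in hand, the remaining steps are routine algebraic manipulations driven by the functional form of $\Xi$.
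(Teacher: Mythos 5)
Your proposal is correct and follows essentially the same route as the paper's proof: invoke the monotonicities from Theorem \ref{theorem1-sec41}, bound $\gamma_{k-1}$ linearly in $\|X^{k-1}-\overline{X}\|_F$ via the perturbation estimates on $W^{k-1}$, substitute into $\Xi(\gamma_{k-1})$ using $\sqrt{1+t^2}\le 1+t$ and $\gamma_{k-1}\le\widetilde{\gamma}_1$, and iterate the resulting recursion. The only cosmetic difference is that you derive the key bound $\beta_{k-1}\le \|X^{k-1}-\overline{X}\|_F/(\sigma_r(\overline{X})-\sqrt{2}\,\Xi(\gamma_0))$ from Lemma \ref{lemma1-A} together with $-\ln(1-x)\le x/(1-x)$, whereas the paper imports the same linear estimate directly from \cite[Eqs.\ (49)--(51)]{MiaoPS16}; the two are equivalent.
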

  \begin{remark}\label{remark-Gconvergence}
  {\bf(a)} The requirement $\sigma_r(\overline{X})\!>\max(2,\sqrt{2}\!+\!\alpha)\Xi(\gamma_0)$
  in Theorem \ref{theorem1-sec42} is bit stronger than $\sigma_r(\overline{X})\!>2\Xi(\gamma_0)$.
  Take $\phi_1$ for example. When $\sigma_r(\overline{X})\ge 2.4\Xi(\gamma_0)$,
  this requirement is automatically satisfied. Also, now we have that
  $\varrho:=\frac{\alpha\,\Xi(\gamma_0)}{\sigma_{r}(\overline{X})\!-\!\sqrt{2}\Xi(\gamma_0)}\le 0.76$.

  \medskip
  \noindent
  {\bf(b)} The first term of the sum on the right hand side of \eqref{gconverge1} represents
  the statistical error arising from the noise and the sampling operator $\mathcal{A}$,
  and the second term is the estimation error related to the multi-stage convex relaxation.
  Clearly, the statistical error is of a certain order of $\Xi(0)$. Thus, to guarantee that
  the second term is less than the statistical error, at most $\overline{k}$ stage
  convex relaxations are required, where
  \[
    \overline{k}=\frac{\log(\Xi(0))-\log(\|X^1-\overline{X}\|_F)}{\log\varrho}+1
    \le \frac{\log(\Xi(0)/\Xi(\gamma_0))}{\log\varrho}+1.
  \]
   Take $\varrho=0.7$ for example. When $s=r$, one can calculate that $\overline{k}\le 2$ if $c=0.3$,
  and $\overline{k}\le 4$ if $c=0.7$. This means that, for those $\mathcal{A}^*\mathcal{A}$ with
  a worse restricted eigenvalue condition, more than two stage convex relaxations are needed to
  yield a satisfactory solution.
  \end{remark}

  For the analysis in the previous two subsections, the condition
   $\sigma_r(\overline{X})\!\ge\alpha\Xi(\gamma_0)$ for a certain $\alpha> 2$
  is required for the decreasing of the error and approximate rank bounds
  of the first stage convex relaxation and the contraction of the error sequence.
  Such a condition is necessary for the low-rank recovery since, when the smallest
  nonzero singular is mistaken as a zero, the additional singular vectors will yield
  a large error. In fact, in the geometric convergence analysis of sparse vector optimization
  (see \cite{Zhang10}), the error bound of the first stage was implicitly assumed not to be too large.
  In addition, we observe that the structure information of $\overline{X}$ does not lend any help
  to the low-rank matrix recovery in terms of convergence rates. However, when the true matrix
  has a certain structure, it is necessary to incorporate such structure information
  into model \eqref{rank-min}. Otherwise, the solution $X^k$ yielded by
  the multi-stage convex relaxation may not satisfy the structure constraint, and then
  it is impossible to control the error of $X^k$ to the true matrix $\overline{X}$.

  \medskip

  Finally, we point out that when the components $\xi_1,\xi_2,\ldots,\xi_m$ of
  the noisy vector $\xi$ are independent (but not necessarily identically distributed) sub-Gaussians,
  i.e., there exists a constant $\sigma\ge 0$ such that $\mathbb{E}[e^{t\xi_i}]\le e^{\sigma^2t^2/2}$
  holds for all $i$ and any $t\in\mathbb{R}$, by Lemma \ref{lemma-noise} in Appendix C,
  the conclusions of Theorems \ref{theorem1-sec41} and \ref{theorem1-sec42} hold
  with $\delta=\sqrt{m}\sigma$ with probability  at least $1-\exp(1-\frac{c_1m}{4})$
  for an absolute constant $c_1>0$.  For the random $\mathcal{A}$,
  the following result is immediate by \cite[Theorem 2.3]{Candes11} and
  the first inequality in \eqref{relation}.
 \begin{theorem}\label{random-Aoper}
  Fix $\overline{\delta}\in(0,1/2)$ and let $\mathcal{A}$ be a random measurement ensemble obeying
  the following conditions: for any given $X\in\mathbb{R}^{n_1\times n_2}$ and any fixed $0<t<1$,
  \begin{equation}\label{Acond}
   \mathbb{P}\left\{|\|\mathcal{A}(X)\|^2-\|X\|_F^2|>t\|X\|_F^2\right\}\le C\exp(-c_2m)
  \end{equation}
  for fixed constants $C,c_2>0$ (which may depend on $t$). If $m\ge 3C(n_1\!+\!n_2\!+\!1)r$
  with $C>\frac{\log(36\sqrt{2}/\overline{\delta})}{c_2}$, then Assumption \ref{assump} holds
  for $s=r/2$ and $c=\sqrt{\frac{2\overline{\delta}}{1-\overline{\delta}}}$ with probability
  exceeding $1\!-\!2\exp(-dm)$ where $d\!=c_2-\frac{\log(36\sqrt{2}/\overline{\delta})}{C}$.
  Consequently, when $0\le\!\gamma_{k-1}<\!1/c$, the bounds in \eqref{noisyless-bound} holds
  with probability at least $1\!-\!2\exp(-dm)$ for such random measurements.
 \end{theorem}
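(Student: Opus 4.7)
The plan is to apply \cite[Theorem 2.3]{Candes11} to convert the distributional concentration assumption \eqref{Acond} into a rank-$3r$ RIP statement, and then pass from RIP to the restricted eigenvalue ratio in Assumption \ref{assump} via the first inequality in \eqref{relation}. The bounds in \eqref{noisyless-bound} then follow from Proposition \ref{prop1-sec41}.

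First I would invoke \cite[Theorem 2.3]{Candes11} with rank parameter $3r$: under the concentration assumption \eqref{Acond}, provided $m\ge 3C(n_1+n_2+1)r$ with $C>\log(36\sqrt{2}/\overline{\delta})/c_2$, the $3r$-restricted isometry constant of $\mathcal{A}$ satisfies $\delta_{3r}\le\overline{\delta}$ with probability at least $1-2\exp(-dm)$, where $d=c_2-\log(36\sqrt{2}/\overline{\delta})/C>0$ by the choice of $C$. The constants $36\sqrt{2}$ and the factor $3$ in the sample-size bound trace back to the covering-number argument of \cite{Candes11} applied to the unit sphere of $3r$-rank matrices; I would simply quote these.

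Next, on the event $\{\delta_{3r}\le\overline{\delta}\}$, I would specialize the first inequality in \eqref{relation} to $s=r/2$, using the monotonicity $\delta_{r/2}\le\delta_{3r}$:
\[
\frac{\vartheta_{+}(r/2)}{\vartheta_{-}(3r)}\le\frac{1+\delta_{r/2}}{1-\delta_{3r}}\le\frac{1+\delta_{3r}}{1-\delta_{3r}}\le\frac{1+\overline{\delta}}{1-\overline{\delta}}=1+\frac{2\overline{\delta}}{1-\overline{\delta}}=1+c^{2},
\]
with $c=\sqrt{2\overline{\delta}/(1-\overline{\delta})}$. Since $2r+2s=3r$ when $s=r/2$, the right-hand side equals $1+\frac{2c^{2}s}{r}$, so Assumption \ref{assump} is verified with $s=r/2$ and this $c$. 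Note $\overline{\delta}<1/2$ forces $c^{2}<2$, hence $c\in[0,\sqrt{2})$ as required.

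Finally, the bounds in \eqref{noisyless-bound} follow directly from Proposition \ref{prop1-sec41} on this high-probability event, since that proposition assumes only Assumption \ref{assump} together with $0\le\gamma_{k-1}<1/c$, and its conclusions are deterministic given the verified restricted-eigenvalue condition. The main substantive content is really the invocation of \cite[Theorem 2.3]{Candes11}; the only place where care is needed is matching the constants $C,c_2$ in \eqref{Acond} with the threshold $C>\log(36\sqrt{2}/\overline{\delta})/c_2$ used in the covering argument, and confirming that $d>0$ so the failure probability $2\exp(-dm)$ is genuinely exponentially small. Everything else reduces to an algebraic manipulation of the RIP-to-restricted-eigenvalue translation already carried out after Assumption \ref{assump}.
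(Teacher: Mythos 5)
Your proposal is correct and follows exactly the route the paper intends: the paper gives no separate proof, stating only that the result is ``immediate by \cite[Theorem 2.3]{Candes11} and the first inequality in \eqref{relation}'', which is precisely your two-step argument (Cand\`es--Plan to get $\delta_{3r}\le\overline{\delta}$ on a $1-2\exp(-dm)$ event, then the algebra $\frac{1+\overline{\delta}}{1-\overline{\delta}}=1+c^2=1+\frac{2c^2s}{r}$ for $s=r/2$), followed by Proposition \ref{prop1-sec41}. Your verification that $\overline{\delta}<1/2$ forces $c<\sqrt{2}$ is a useful detail the paper leaves implicit.
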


  As remarked after \cite[Theorem 2.3]{Candes11}, the condition in \eqref{Acond} holds
  when $\mathcal{A}$ is a Gaussian random measurement ensemble (i.e., $A_1,\ldots,A_m$
  are independent from each other and each $A_i$ contains i.i.d. entries $\mathcal{N}(0,1/m)$);
  or when each entry of each $A_i$ has i.i.d. entries that are equally likely to take
  $\frac{1}{\!\sqrt{m}}$ or $-\frac{1}{\!\sqrt{m}}$; or when $\mathcal{A}$ is a random projection
  (see \cite{Recht10}).

 \section{Numerical experiments}\label{sec5}

  In this section, we shall test the theoretical results in Section \ref{sec4} by applying Algorithm \ref{Alg1}
  to some low-rank matrix recovery problems, including matrix sensing and matrix completion problems.
  During the testing, we chose $\phi_2$ with $q=1/2$ and $\epsilon=10^{-3}$ for the function $\phi$
  in Algorithm \ref{Alg1}. Although Table \ref{tab1} shows that Algorithm \ref{Alg1} with $\phi_1$
  reduces the error faster than Algorithm \ref{Alg1} with $\phi_2$ does, our preliminary testing
  indicates that the latter has a little better performance in reducing the relative error.
  This accounts for choosing $\phi_2$ instead of $\phi_1$ for our numerical testing.
  In addition, we always chose $\rho_1=10/\|X^1\|$ and $\mu_k={5}/{4}\ (k\ge 1)$.
  All the results in the subsequent subsections were run on the Windows system
  with an Intel(R) Core(TM) i3-2120 CPU 3.30GHz.


 \subsection{Low-rank matrix sensing problems}\label{subsec5.1}

  We tested the performance of Algorithm \ref{Alg1} with some matrix sensing problems,
  for which some entries are known exactly. Specifically, we assumed that ${\bf 5}$ entries of
  the true $\overline{X}\in\mathbb{R}^{n_1\times n_2}$ of rank $r$ are known exactly,
  and generated $\overline{X}$ in the following command
  \begin{verbatim}
         XR = randn(n1,r);  XL = randn(n2,r); Xbar = XR*XL'.
  \end{verbatim}
  \vspace{-0.5cm}
  We successively generated the matrices $A_1,\ldots,A_m\in\mathbb{R}^{n_1\times n_2}$ with i.i.d. standard
  normal entries to formulate the sampling operator $\mathcal{A}$. Such $\mathcal{A}$
  satisfies the RIP property with a high probability by \cite{Recht10}, which means that the restricted
  eigenvalues of $\mathcal{A}^*\mathcal{A}$ can satisfy Assumption \ref{assump} with a high probability
  from the discussions after Assumption \ref{assump}. Then, we successively generated the standard
  Gaussian noises $\xi_1,\ldots,\xi_m$ to formulate $b$ by
  \begin{equation}\label{observe}
     b=\mathcal{A}\overline{X}+0.1(\|\mathcal{A}\overline{X}\|/{\|\xi\|}){\xi}\ \ {\rm with}\ \
    \xi=(\xi_1,\ldots,\xi_m)^{\mathbb{T}}.
   \end{equation}
  Take $\delta=0.1\|b\|$ and $\Omega=\big\{X\in\mathbb{R}^{n_1\times n_2}\ |\ \mathcal{B}X=d,\,\|X\|\le R\big\}$
  for $R=2\|\overline{X}\|$, where $\mathcal{B}X:=(X_{ij})$ for $(i,j)\in\Upsilon_{\rm fix}$
  with $\Upsilon_{\rm fix}$ being the index set of known entries,
  and $d\in\mathbb{R}^{|\Upsilon_{\rm fix}|}$ is the vector
  consisting of $\overline{X}_{ij}$ for $(i,j)\in\Upsilon_{\rm fix}$.
  Let $\mathbb{I}_{S}(\cdot)$ denote the indicator function over a set $S$.
  The subproblem \eqref{subprob-X} in Algorithm \ref{Alg1} now has the form
  \begin{align}\label{primal1-subprob1}
   &\min_{X,Z\in\mathbb{R}^{n_1\times n_2},z\in\mathbb{R}^m}\!\|X\|_*-\langle C,X\rangle+\mathbb{I}_{\mathcal{R}}(z)+\mathbb{I}_{\Lambda}(Z)\nonumber\\
   &\qquad\quad {\rm s.t.}\ \ \mathcal{A}X-z-b=0,\,\mathcal{B}X-d=0,\,X-Z=0,
  \end{align}
  where $\mathcal{R}\!:=\{z\in\mathbb{R}^m\ |\ \|z\|\le\delta\}$ and
  $\Lambda\!:=\{X\in\mathbb{R}^{n_1\times n_2}\ |\ \|X\|\le R\}$.
  The dual of \eqref{primal1-subprob1} is
  \begin{align}\label{dual1-subprob1}
   &\min_{Y,\Gamma\in\mathbb{R}^{n_1\times n_2},\xi, u\in\mathbb{R}^m,\eta\in\mathbb{R}^{|\Upsilon_{\rm fix}|}}
  \langle b,\xi\rangle+\langle d,\eta\rangle+\delta\| u\|+ R \|Y\|_*\nonumber\\
   &\qquad\qquad\ {\rm s.t.}\ \ C-\mathcal{A}^*(\xi)-\mathcal{B}^*(\eta)-Y-\Gamma=0,\,\xi-u =0,\,\|\Gamma\|\le 1.
  \end{align}
  During the testing, we solved the subproblems of the form \eqref{primal1-subprob1}
  until the primal and dual relative infeasibility is less than $10^{-6}$ and the difference
  between the primal objective value and the dual one is less than $10^{-5}$,
  with the powerful Schur-complement based semi-proximal ADMM (alternating direction method of multipliers)
  \cite{LST161} for problem \eqref{dual1-subprob1}.

 \subsubsection{Performance of Algorithm \ref{Alg1} in different stages}\label{subsubsec5.1}

  We generated randomly a matrix sensing problem with some entries known as above with $n_1=n_2=100$,
  $r=6$ and $m=2328$ to test the performance of Algorithm \ref{Alg1} in different stages.
  Figure \ref{fig1} plots the relative error of Algorithm \ref{Alg1} in the first fifteen stages.
  We see that Algorithm \ref{Alg1} reduces the relative error of the nuclear norm relaxation method
  most in the second stage, and after the third stage the reduction becomes insignificant.
  This performance coincides with the analysis results shown as in Table \ref{tab1}.
 \begin{figure}[htbp]
   \setlength{\abovecaptionskip}{1pt}
 \begin{center}
 {\includegraphics[width=1.0\textwidth]{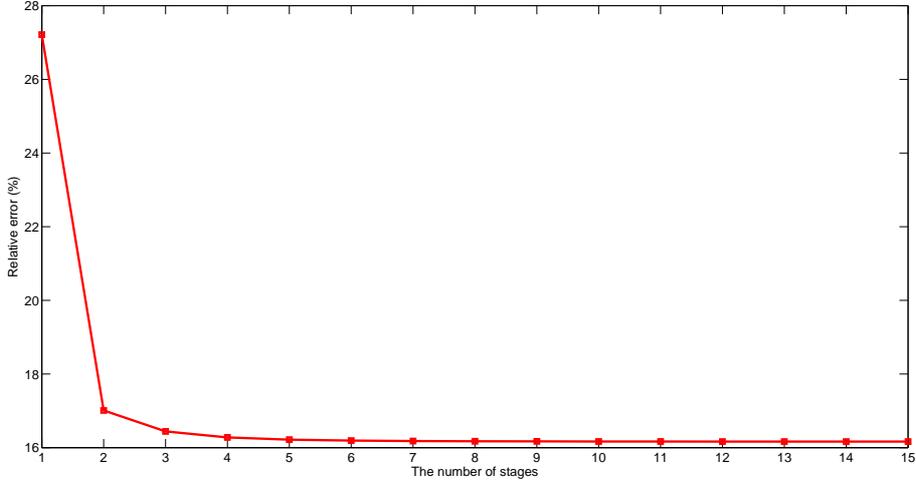}}\\
 \caption{\small Performance of Algorithm \ref{Alg1} in the first fifteen stages}
 \label{fig1}
 \end{center}
 \end{figure}

  \subsubsection{Performance of Algorithm \ref{Alg1} with different samples}\label{subsubsec5.2}

  We generated randomly a matrix sensing problem with some entries known as above with $n_1=n_2=100$,
  $r=5$ and $m=\nu r(2n\!-\!r)$ for $\nu\in\{1.0,1.1,\ldots,3.0\}$ to test the performance of
  Algorithm \ref{Alg1} under different samples. Figure \ref{fig2} depicts the relative error curves
  and the rank curves of the first stage convex relaxation and the first five stages convex relaxation,
  respectively. We see that the relative errors of the first stage convex relaxation
  and the first five stages convex relaxation decrease as the number of samples increases,
  but the relative error of the latter is always smaller than that of the former. Moreover,
  the first five stages convex relaxation reduces those of the first stage convex relaxation
  at least $25\%$ for $\nu\in[1.0,3.0]$, and the reduction becomes less as the number of samples increases.
  In particular, the rank of $X^1$ is higher than that of $\overline{X}$ even for $30\%$
  sampling ratio, but the rank of $X^5$ equals that of $\overline{X}$ even for $12\%$ sampling ratio.

   \begin{figure}[htbp]
   \setlength{\abovecaptionskip}{1pt}
   \begin{center}
   {\includegraphics[width=1.05\textwidth]{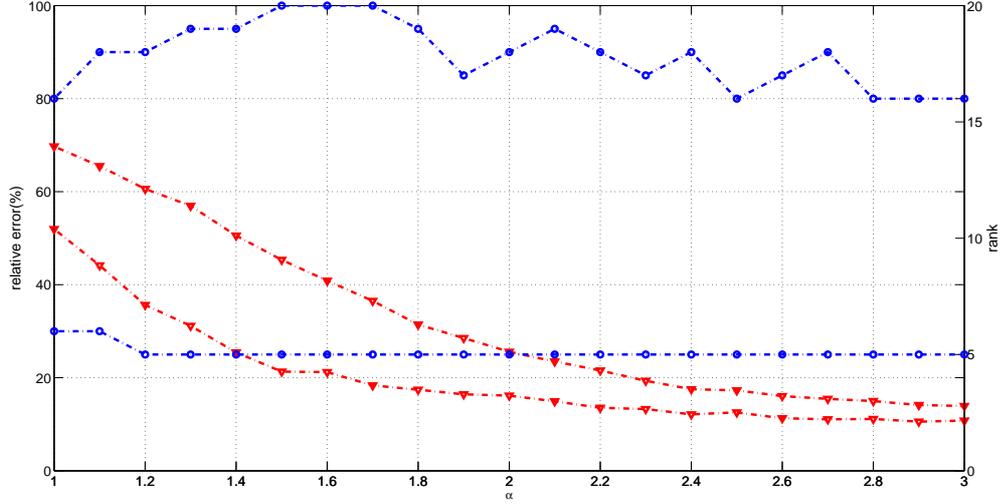}}
   \caption{Performance of the first stage and the first five stages convex relaxation}
    \label{fig2}
   \end{center}
   \end{figure}

 \subsection{Low-rank PSD matrix completion problems}\label{subsec5.3}

  We applied Algorithm \ref{Alg1} to two classes of low-rank PSD matrix
  completion problems. Although the sampling operators for such problems do not
  satisfy the RIP property, it is possible for the restricted eigenvalues
  of $\mathcal{A}^*\mathcal{A}$ to satisfy Assumption \ref{assump}. For these problems,
  \(
   \Omega=\big\{X\in\mathbb{S}_{+}^{n}\ |\ \mathcal{E}_1(X)=g_1,\,\mathcal{E}_2(X)\le g_2\big\}
  \)
  where $\mathcal{E}_1\!:\mathbb{S}^n\to\mathbb{R}^{l_1}$ and
  $\mathcal{E}_2\!:\mathbb{S}^n\to\mathbb{R}^{l_2}$ are the linear operators,
  and $g_1\in\mathbb{R}^{l_1}$ and $g_2\in\mathbb{R}^{l_2}$ are the given vectors.
  For this case, the subproblem \eqref{subprob-X} in Algorithm \ref{Alg1} now
  takes the form of
  \begin{align}\label{primal2-subprob1}
   &\min_{X\in\mathbb{S}^{n},z\in\mathbb{R}^m,y\in\mathbb{R}^{l_2}} \langle C,X\rangle+\mathbb{I}_{\mathbb{S}_{+}^n}(X)+\mathbb{I}_{\mathcal{R}}(z)+\mathbb{I}_{\mathbb{R}_{+}^{l_2}}(y)\nonumber\\
   &\qquad\ \ {\rm s.t.}\ \ \mathcal{A}X-z-b=0,\,
              \left(\begin{matrix}
              \mathcal{E}_1\\ \mathcal{E}_2
              \end{matrix}\right)X
              -\left(\begin{matrix}
                g_1\\ g_2
                \end{matrix}\right)+
                \left(\begin{matrix}
                0 \\ y
                \end{matrix}\right)=0.
  \end{align}
  After an elementary calculation, the dual problem of \eqref{primal2-subprob1} has the following form
  \begin{align}\label{dual2-subprob1}
   &\min_{\Gamma\in\mathbb{S}^n,\xi\in\mathbb{R}^m,\eta_1\in\mathbb{R}^{l_1},\eta_2,u\in\mathbb{R}^{l_2}}
   \langle b,\xi\rangle+\langle g_1,\eta_1\rangle+\langle g_2,\eta_2\rangle
   +\delta\| \xi\|+\mathbb{I}_{\mathbb{S}_{+}^n}(\Gamma)+\mathbb{I}_{\mathbb{R}_{+}^{l_2}}(u)\nonumber\\
   &\qquad\qquad\ {\rm s.t.}\ \ C+\mathcal{A}^*(\xi)+\mathcal{E}_1^*(\eta_1)+\mathcal{E}_2^*(\eta_2)-\Gamma=0,\,\eta_2-u=0.
  \end{align}
  During the testing, we solved the subproblems of the form \eqref{primal2-subprob1}
  until the primal and dual relative infeasibility is less than $10^{-6}$ and
  the difference between the primal objective value and the dual one is less than $10^{-5}$
  with the Schur-complement based semi-proximal ADMM \cite{LST161} for problem \eqref{dual2-subprob1},
  and stopped Algorithm \ref{Alg1} at the $k$th iteration once
  \[
    {\rm rank}(X^{k-2})\!=\!{\rm rank}(X^{k-1})\!=\!{\rm rank}(X^k),
  \]
  where ${\rm rank}(X^k)$ is the number of nonzero singular values of $X^k$
  less than $10^{-10}\cdot\sigma_{\rm max}(X^k)$.

  \subsubsection{Low-rank correlation matrix completion problems}\label{subsubsec5.3.1}

  A correlation matrix is a real symmetric PSD matrix with all diagonals being $1$.
  We generated the true correlation matrix $\overline{X}\in\mathbb{S}_{+}^{n}$ of
  rank $r$ in the following command:
  \begin{verbatim}
    L = randn(n,r);  W = weight*L(:,1:1);  L(:,1:1) = W; G = L*L';
    M = diag(1./sqrt(diag(G)))*G*diag(1./sqrt(diag(G))); Xbar = (M+M')/2.
  \end{verbatim}
  \vspace{-0.5cm}
  In this way, one can control the ratio of the largest eigenvalue and the smallest
  nonzero eigenvalue of $\overline{X}$ by {\ttfamily weight}.
  We assume that some off-diagonal entries of $\overline{X}$ are known. Thus,
  $\mathcal{E}_1(X)=\left(\begin{matrix}
                           {\rm diag}(X)\\ \mathcal{B}(X)
                           \end{matrix}\right)$ for $X\in\mathbb{S}^n$,
  $g_1=\left(\begin{matrix}
   e \\ d
   \end{matrix}\right)$,
  $\mathcal{E}_2\equiv 0$ and $g_2=0$, where the operator $\mathcal{B}\!:\mathbb{S}^n\to\mathbb{R}^{|\Upsilon_{\rm fix}|}$
  and the vector $d\in\mathbb{R}^{|\Upsilon_{\rm fix}|}$ are defined as in Subsection \ref{subsec5.1}.
  The noise vector $\xi$ and the observation vector $b$ are generated in the same way as in \eqref{observe}.

  \medskip

  Table \ref{tab51} reports the numerical results of Algorithm \ref{Alg1} for some examples
  generated randomly. The information of $\overline{X}$ is reported
  in the first three columns, where the second column lists the number of known off-diagonal entries
  for $\overline{X}$, and the third column gives the ratio of the largest eigenvalue
  of $\overline{X}$ to the smallest nonzero eigenvalue of $\overline{X}$. For each test example,
  we sampled partial unknown off-diagonal entries uniformly at random to formulate the operator
  $\mathcal{A}$, where the sample ratio is ${\bf 1.92\%}$ for ${\rm rank}(\overline{X})=5$
  and ${\bf 4.32\%}$ for ${\rm rank}(\overline{X})=10$.
  The fourth and the fifth columns report the results of the first stage convex relaxation
  and the first two stages convex relaxation, respectively, and the sixth column reports
  the final result, where {\bf relerr(rank)} means the relative error and the rank of solutions,
  and {\bf iter} is the total number of iterations required by the Schur-complement
  based semi-proximal ADMM for the corresponding convex relaxation.
  \begin{table}[htbp]
  \setlength{\abovecaptionskip}{2pt}
  \setlength{\belowcaptionskip}{0pt}
  \centering
   {\caption{\label{tab51} Performance for low-rank correlation matrix recovery problems with $n=1000$}}
   \begin{tabular}{|c|c|c|cc|cc|ccc|}
  \hline
   &    &  &\multicolumn{7}{c|}{Algorithm \ref{Alg1}}\\
   \cline{4-10}
  \raisebox{1.0ex}[0pt]{$r$}\!&\!{$\begin{array}{c} {\rm  off-} \\  {\rm diag} \end{array}$}\!&\!{\rm eigr}
  \!&\multicolumn{2}{c|}{\rm The first stage} &\multicolumn{2}{c|}{\rm The first two stages}&\multicolumn{3}{c|}{\rm Final result}\\
  \cline{4-10}
   &  &  &{\rm relerr(rank)}&\!{\rm iter}\!&{\rm relerr(rank)}&\!{\rm iter}\!& $k$ &{\rm relerr(rank)}&{\rm iter}\\
  \hline \hline
     &  0  &1.19  & 5.94e-1(1000)& 89 & 1.59e-1(6) & 1100 & 5 & 1.56e-1(5) & 4458\\

     &  0  &2.86  & 4.43e-1(1000)& 101 & 1.51e-1(5) & 629 & 4 & 1.52e-1(5) & 1805\\

     &  0  &4.36 & 3.57e-1(1000) & 98 & 1.49e-1(6) & 769 & 5 & 1.54e-1(5) & 2006\\

   \raisebox{1.5ex}[0pt]{5}  & 100 &1.17 & 5.81e-1(1000)& 88 & 1.53e-1(6)& 894 & 5 & 1.50e-1(5)& 4098\\

     & 100  &2.79  & 4.48e-1(1000) & 100 & 1.47e-1(5)& 697 & 4 & 1.48e-1(5) & 1734\\

     & 100  &4.23  & 3.60e-1(1000) & 98 & 1.48e-1(6) & 832 & 5 & 1.49e-1(5) & 2004\\

  \hline

     & 0  &1.36 & 4.16e-1(1000)& 67 & 1.48e-1(10) & 608 & 4 & 1.43e-1(10) & 1987\\

     & 0  &3.52 & 3.49e-1(1000) & 70 & 1.42e-1(10) & 464 & 4 & 1.41e-1(10) & 1152 \\

     & 0  &6.39 & 2.95e-1(1000) & 59 & 1.34e-1(10) & 373 & 4 & 1.37e-1(10) & 852\\

   \raisebox{1.5ex}[0pt]{10} & 100 &1.42 & 3.97e-1(1000)& 67 & 1.46e-1(10) & 562 & 4 & 1.42e-1(10) & 1934\\

     & 100  & 3.31 & 3.43e-1(1000)& 70 & 1.40e-1(10) & 470 & 4 & 1.40e-1(10) & 1218\\

     & 100  & 6.35 & 2.87e-1(1000)& 66 & 1.38e-1(10)& 344 & 5 & 1.42e-1(10)& 818\\
  \hline
  \end{tabular}
  \end{table}

  \medskip

  We see that the solution given by the trace-norm relaxation method has a high relative error
  and a full rank, while the two-stage convex relaxation reduces the relative error of
  the trace-norm relaxation at least $50\%$ for all test problems. Although the two-stage
  convex relaxation may yield the desirable relative error for all the test problems,
  the ranks of some problems (for example, the third and the fourth) are higher than that of $\overline{X}$.
  With the number of stages increasing, Algorithm \ref{Alg1} yields the same rank as that of $\overline{X}$.
  This indicates that for the problems with suitable sample ratios, the two-stage convex relaxation is enough;
  while for the problem with very low sample ratios, more than two stages convex relaxation is needed.
  In addition, since all the constraints to define the set $\Omega$ are of the hard type,
  some of the relative errors in the sixth column are little higher than those in the fifth column.
  \subsubsection{Low-rank covariance matrix completion problems}\label{subsubsec5.3.1}

  We generated the true covariance matrix $\overline{X}\in\mathbb{S}_{+}^{n}$ of
  rank $r$ in the following command:
  \begin{verbatim}
       L = randn(n,r)/sqrt(sqrt(n));  W = weight*L(:,1:1);
       L(:,1:1) = W;  G = L*L'; Xbar = (G+G')/2.
  \end{verbatim}
  \vspace{-0.5cm}
  In this case, $\mathcal{E}_1=\mathcal{B}$ and $g_1=d$ where
  $\mathcal{B}\!:\mathbb{S}^n\to\mathbb{R}^{|\Upsilon_{\rm fix}|}$ and
  $d\in\mathbb{R}^{|\Upsilon_{\rm fix}|}$ are defined as in Subsection \ref{subsec5.1},
  $\mathcal{E}_2(X):=(X_{ii})$ for $(i,i)\in \Upsilon_{\rm diag}$  with $\Upsilon_{\rm diag}$
  being the index set of unknown diagonal entries of $\overline{X}$,
  and $g_2\in\mathbb{R}^{|\Upsilon_{\rm diag}|}$ is the vector consisting of the upper bounds
  for unknown diagonal entries of $\overline{X}$. We set
  $g_2=(1+0.01\textbf{rand}(1,1))\|\overline{X}\|_{\infty}\textbf{ones}(|\Upsilon_{\rm diag}|,1)$.

  \medskip

  Table \ref{tab52} reports the numerical results of Algorithm \ref{Alg1} for some problems generated randomly.
  The information of the true covariance matrix $\overline{X}$ is reported in the first two columns,
  where the second column lists the number of known diagonal and off-diagonal entries of $\overline{X}$,
  and the third column reports the ratio of the largest eigenvalue
  of $\overline{X}$ to the smallest nonzero eigenvalue of $\overline{X}$. For each test example,
  we sampled the upper triangular entries uniformly at random to formulate the sampling operator
  $\mathcal{A}$, where the sample ratio is ${\bf 1.91\%}$ for ${\rm rank}(\overline{X})=5$
  and ${\bf 5.72\%}$ for ${\rm rank}(\overline{X})=13$.
  The fourth and the fifth columns report the results of the first stage convex relaxation and
  the first two stages convex relaxation, respectively, and the last one lists the final results.
   \begin{table}[htbp]
  \centering
   {\caption{\label{tab52} Performance for low-rank covariance matrix recovery problems with $n=1000$}}
   \begin{tabular}{|c|c|c|cc|cc|ccc|}
  \hline
   &   &   &\multicolumn{7}{c|}{Algorithm \ref{Alg1}}\\
  \cline{4-10}
  \raisebox{1.5ex}[0pt]{$r$} & \!{$\begin{array}{c} ({\rm diag}, \\  {\rm offdiag}) \end{array}$}\!&\!{\rm eigr}\!
  &\multicolumn{2}{c|}{\rm The first stage}&\multicolumn{2}{c|}{\rm The first two stages} &\multicolumn{3}{c|}{\rm Final result}\\
  \cline{4-10}
   &    &  &\!{\rm relerr(rank)}\!& iter&\!{\rm relerr(rank)}\!& iter& $k$ &\!{\rm relerr(rank)}\!& iter \\
  \hline \hline
     & $(200,0)$ & 1.18 & 4.80e-1(36)& 787 & 2.42e-1(7)& 1309 & 4 & 2.25e-1(5)& 1996\\
    & $(200,0)$ & 4.59  &3.19e-1(32) & 676 & 1.92e-1(8)& 1214 & 4 & 1.82e-1(5)& 1800\\
 \raisebox{1.5ex}[0pt]{5}& $(0,200)$ & 1.20 & 4.86e-1(36)& 275 & 2.42e-1(7)& 451 & 4 & 2.21e-1(5)& 996\\
     & $(0,200)$ & 4.26 & 3.24e-1(33)& 349 & 1.98e-1(7) & 523 & 4 & 1.90e-1(5)& 1016\\
     & $(100,100)$ & 1.21& 4.74e-1(36)& 1038 & 2.42e-1(6) & 1734 & 4 &2.24e-1(5)& 2746\\
     & $(100,100)$ & 4.07 & 3.33e-1(33)& 879 & 1.92e-1(7) & 1354 & 4 & 1.80e-1(5)& 2155\\
  \hline
  & $(200,0)$ & 5.33 & 2.20e-1(53)& 253 & 1.56e-1(13)& 409 & 3 & 1.54e-1(13)& 554\\
    & $(200,0)$ & 7.72 & 1.90e-1(48) & 258 & 1.46e-1(13)& 393 & 3 & 1.45e-1(13)& 557\\
 \raisebox{1.5ex}[0pt]{13}& $(0,200)$ & 5.17 & 2.20e-1(53)& 177 & 1.58e-1(13)& 321 & 3 & 1.55e-1(13)& 466\\
     & $(0,200)$ & 9.01 & 1.78e-1(45)& 172 & 1.44e-1(13) & 323 & 3 & 1.43e-1(13)& 484\\
     & $(100,100)$ & 4.58 & 2.30e-1(54)& 256 & 1.59e-1(13) & 402 & 3 &1.56e-1(13)& 548\\
     & $(100,100)$ & 8.11 & 1.85e-1(48)& 221 & 1.47e-1(13) & 369& 3 & 1.46e-1(13)&530\\
  \hline
  \end{tabular}
  \end{table}

  \medskip

  We see that the solution yielded by the trace-norm relaxation method has a high relative
  error and rank, and the solution given by the first two stages convex relaxation
  has the desirable relative error but its rank is still higher than that of the true matrix
  for those problems with low sample ratios. This shows that for these difficult problems,
  more than two stages convex relaxation is required. For the problems with with $1.91\%$ sample ratio,
  the two-stage convex relaxation reduces the error bounds of the trace-norm relaxation method
  at least $38\%$, while for those problems with $5.32\%$ sample ratio, the reduction rate is over $19\%$.
  In addition, combining with the results in Table \ref{tab51}, we see that the performance
  of our multi-stage convex relaxation has no direct like with the ratio of the largest eigenvalue
  and the smallest nonzero eigenvalue of the true $\overline{X}$.

  \section{Conclusions}

  We have proposed a multi-stage convex relaxation approach to the structured
  rank minimization problem (\ref{rank-min}) by solving the exact penalty problem
  of its equivalent MPGCC in an alternating way. It turned out that this approach
  not only has favorable theoretical guarantees but also reduces the error
  of the nuclear norm relaxation method effectively. There are several topics worthwhile
  to pursue, such as to develop the fast and effective algorithms for seeking
  the solution of subproblems, to establish the theoretical guarantee for the case
  where the subproblems are solved inexactly, and apply this approach to other classes
  of low-rank optimization problems, say, low-rank plus sparse problems.

  \bigskip
  \noindent
  {\large\bf Acknowledgements}\ \ This work is supported by the National Natural
 Science Foundation of China under project No.11571120 and the Natural Science Foundation of Guangdong Province under project No. 2015A030313214. The authors would like to thank Dr Xudong Li on help us improve our computing codes.


  \bigskip
  \noindent
  {\bf\large Appendix A}

  \medskip

  Let $M\in\mathbb{R}^{n_1\times n_2}$ be a matrix of rank $\kappa>0$ with the SVD as
  \(
    U[{\rm Diag}(\sigma(M))\ \ 0] V^\mathbb{T},
  \)
  where $U\!=[U_1\ \ U_2]\in\mathbb{O}^{n_1}$ and $V=[V_1\ \ V_2]\in\mathbb{O}^{n_2}$
  with $U_1\in\mathbb{O}^{n_1\times\kappa}$ and $V_1\in\mathbb{O}^{n_2\times\kappa}$.
  Denote by $\mathcal {T}(M)$ the tangent space at $M$ associated to
  the rank constraint ${\rm rank}(X)\le \kappa$. Then, the subspace $\mathcal{T}(M)$
  and its orthogonal complementarity in $\mathbb{R}^{n_1\times n_2}$ have the form
  \begin{align}\label{Tangent-space}
   \mathcal{T}(M)=\big\{X\in\mathbb{R}^{n_1\times n_2}\ |\ X=U_1U_1^{\mathbb{T}}X+XV_1V_1^{\mathbb{T}}-U_1U_1^{\mathbb{T}}XV_1V_1^{\mathbb{T}}\big\},\\
   \mathcal{T}(M)^{\perp}=\big\{X\in\mathbb{R}^{n_1\times n_2}\ |\ X=U_2U_2^{\mathbb{T}}XV_2V_2^{\mathbb{T}}\big\}.\qquad\qquad\nonumber
  \end{align}
  In this part, we let $\widetilde{X} \in\mathbb{R}^{n_1\times n_2}$ be a matrix of rank $\kappa>0$
  with the SVD given by $\widetilde{U}[{\rm Diag}(\sigma(\widetilde{X}))\ \ 0]\widetilde{V}^{\mathbb{T}}$,
  where $\widetilde{U}=[\widetilde{U}_1\ \ \widetilde{U}_2]\in\mathbb{O}^{n_1}$ with
  $\widetilde{U}_1\in\mathbb{O}^{n_1\times\kappa}$ and
  $\widetilde{V}=[\widetilde{V}_1\ \ \widetilde{V}_2]\in\mathbb{O}^{n_2}$ with
  $\widetilde{V}_1\in\mathbb{O}^{n_2\times\kappa}$. We shall derive an upper bound
  for the projection of the perturbed $\widetilde{U}_{1}\widetilde{V}_{1}^\mathbb{T}$
  by a matrix $W\!\in\mathbb{R}^{n_1\times n_2}$ onto the subspaces $\mathcal {T}(\widetilde{X})$
  and $\mathcal {T}(\widetilde{X})^\perp$, respectively.
  \begin{alemma}\label{WU1V1}
   For any given $W\in\mathbb{R}^{n_1\times n_2}$ with the SVD as $U[{\rm Diag}(w_1,\ldots,w_{n_1})\ \ 0]V^\mathbb{T}$,
   where $U\!=[U_1\ \ U_2]\in\mathbb{O}^{n_1}$ with $U_1\in\mathbb{O}^{n_1\times\kappa}$ and
   $V\!=[V_1\ \ V_2]\in\mathbb{O}^{n_2}$ with $V_1\in\mathbb{O}^{n_2\times\kappa}$,
   \begin{equation}\label{WU1V1-ineq1}
   \|\mathcal{P}_{\mathcal {T}(\widetilde{X})^\perp}(W)\|
    \le w_{\kappa+1}+(w_1-w_{\kappa+1})\big\|\widetilde{U}_1\widetilde{V}_1^\mathbb{T}-U_1V_1^\mathbb{T}\big\|^2,
   \end{equation}
   \begin{equation}\label{WU1V1-ineq2}
   \|\mathcal {P}_{\mathcal {T}(\widetilde{X})}(\widetilde{U}_{1}\widetilde{V}_{1}^\mathbb{T}\!-\!W)\|_F
    \le\! (1\!+\!\sqrt{2}w_{\kappa+1})\|U_{1}V_{1}^\mathbb{T}\!-\!\widetilde{U}_1\widetilde{V}_1^\mathbb{T}\|_F
        \!+\!\sqrt{\kappa}\max\big(|1\!-\!w_{1}|,|1\!-\!w_\kappa|\big).
   \end{equation}
  \end{alemma}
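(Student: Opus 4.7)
The proof rests on two algebraic identities that relate subspace-angle quantities to the sign-matrix discrepancy $\Delta:=U_1V_1^{\mathbb{T}}-\widetilde{U}_1\widetilde{V}_1^{\mathbb{T}}$. First, since $\widetilde{U}_2^{\mathbb{T}}\widetilde{U}_1=0$, inserting $V_1^{\mathbb{T}}V_1=I_\kappa$ (resp.\ $U_1^{\mathbb{T}}U_1=I_\kappa$) gives
$\widetilde{U}_2^{\mathbb{T}}U_1=\widetilde{U}_2^{\mathbb{T}}\Delta V_1$ and $V_1^{\mathbb{T}}\widetilde{V}_2=U_1^{\mathbb{T}}\Delta\widetilde{V}_2$,
so $\max(\|\widetilde{U}_2^{\mathbb{T}}U_1\|,\|V_1^{\mathbb{T}}\widetilde{V}_2\|)\le\|\Delta\|$. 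Second, expanding $\|\Delta\|_F^2=2\kappa-2{\rm tr}(\widetilde{V}_1^{\mathbb{T}}V_1U_1^{\mathbb{T}}\widetilde{U}_1)$ and applying the AM--GM bound $2{\rm tr}(AB)\le\|A\|_F^2+\|B\|_F^2$ together with the Pythagorean identities $\|\widetilde{V}_1^{\mathbb{T}}V_1\|_F^2+\|\widetilde{V}_1^{\mathbb{T}}V_2\|_F^2=\kappa$ and its $U$-analogue yields the Frobenius counterpart
$\|\widetilde{U}_2^{\mathbb{T}}U_1\|_F^2+\|V_2^{\mathbb{T}}\widetilde{V}_1\|_F^2\le\|\Delta\|_F^2$.

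To prove \eqref{WU1V1-ineq1}, I split $W=Q_1+Q_2$ with $W_2:=\sum_{i>\kappa}w_iu_iv_i^{\mathbb{T}}$ and
\[
 Q_1:=w_{\kappa+1}U_1V_1^{\mathbb{T}}+W_2,\qquad Q_2:=U_1\bigl({\rm Diag}(w_1,\ldots,w_\kappa)-w_{\kappa+1}I_\kappa\bigr)V_1^{\mathbb{T}}.
\]
The two summands inside $Q_1$ have mutually orthogonal column supports (lying in ${\rm range}(U_1)$ and ${\rm range}(U_2)$) and row supports, so $\|Q_1\|=\max(w_{\kappa+1},\|W_2\|)=w_{\kappa+1}$; directly, $\|Q_2\|=w_1-w_{\kappa+1}$. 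Since $\mathcal{P}_{\mathcal{T}(\widetilde{X})^\perp}$ is nonexpansive in the operator norm, $\|\mathcal{P}_{\mathcal{T}(\widetilde{X})^\perp}(Q_1)\|\le w_{\kappa+1}$, and expanding $\mathcal{P}_{\mathcal{T}(\widetilde{X})^\perp}(Q_2)=\widetilde{U}_2\widetilde{U}_2^{\mathbb{T}}Q_2\widetilde{V}_2\widetilde{V}_2^{\mathbb{T}}$ and using submultiplicativity with the two spectral bounds of the first paragraph yields $\|\mathcal{P}_{\mathcal{T}(\widetilde{X})^\perp}(Q_2)\|\le(w_1-w_{\kappa+1})\|\widetilde{U}_2^{\mathbb{T}}U_1\|\|V_1^{\mathbb{T}}\widetilde{V}_2\|\le(w_1-w_{\kappa+1})\|\Delta\|^2$. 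Adding the two estimates produces \eqref{WU1V1-ineq1}.

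For \eqref{WU1V1-ineq2}, the triangle inequality and $\|\cdot\|_F$-nonexpansiveness of $\mathcal{P}_{\mathcal{T}(\widetilde{X})}$ reduce matters to bounding $\|\mathcal{P}_{\mathcal{T}(\widetilde{X})}(U_1V_1^{\mathbb{T}}-W)\|_F$. I write $U_1V_1^{\mathbb{T}}-W=U_1(I_\kappa-{\rm Diag}(w_1,\ldots,w_\kappa))V_1^{\mathbb{T}}-W_2$; the first chunk has Frobenius norm $\bigl(\sum_{i=1}^\kappa(1-w_i)^2\bigr)^{1/2}\le\sqrt{\kappa}\max(|1-w_1|,|1-w_\kappa|)$ by monotonicity of $w_i$, and the projection is nonexpansive. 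For $W_2$, the identity $\mathcal{P}_{\mathcal{T}(\widetilde{X})}(Y)=\widetilde{U}_1\widetilde{U}_1^{\mathbb{T}}Y+\widetilde{U}_2\widetilde{U}_2^{\mathbb{T}}Y\widetilde{V}_1\widetilde{V}_1^{\mathbb{T}}$ and the triangle inequality give $\|\mathcal{P}_{\mathcal{T}(\widetilde{X})}(W_2)\|_F\le\|\widetilde{U}_1^{\mathbb{T}}W_2\|_F+\|W_2\widetilde{V}_1\|_F$; the bound $\|AB\|_F\le\|A\|_F\|B\|$ with $\|W_2\|=w_{\kappa+1}$ then estimates these by $w_{\kappa+1}\|\widetilde{U}_1^{\mathbb{T}}U_2\|_F$ and $w_{\kappa+1}\|V_2^{\mathbb{T}}\widetilde{V}_1\|_F$ respectively. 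Combining with $a+b\le\sqrt{2}\sqrt{a^2+b^2}$ and the Frobenius identity of the first paragraph yields $\|\mathcal{P}_{\mathcal{T}(\widetilde{X})}(W_2)\|_F\le\sqrt{2}w_{\kappa+1}\|\Delta\|_F$, which assembled with the remaining pieces proves \eqref{WU1V1-ineq2}.

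The main obstacle is finding the decomposition $W=Q_1+Q_2$ in the second paragraph: the naive split $W=W_1+W_2$ would bound the on-support contribution by $w_1$ rather than by the spectral gap $w_1-w_{\kappa+1}$, breaking the structure of \eqref{WU1V1-ineq1}. The trick is to absorb a copy of $w_{\kappa+1}U_1V_1^{\mathbb{T}}$ into the "orthogonal" block $W_2$---this is only possible because the column and row supports line up---so that the residue $Q_2$ is purely a scaled difference diagonal and the projector can only shrink its spectral norm by the squared angle $\|\Delta\|^2$. Once this split and the two identities of the first paragraph are in hand, the remaining steps are routine applications of submultiplicativity, the triangle inequality, and Cauchy--Schwarz.
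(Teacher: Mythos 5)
Your proof is correct and follows essentially the same route as the paper: the same decomposition $W=U_1(\Sigma_1-w_{\kappa+1}I)V_1^{\mathbb{T}}+\big(w_{\kappa+1}U_1V_1^{\mathbb{T}}+U_2[\Sigma_2\ \ 0]V_2^{\mathbb{T}}\big)$ for \eqref{WU1V1-ineq1}, and the same three-way split (sign-matrix difference, diagonal shrinkage $U_1(I-\Sigma_1)V_1^{\mathbb{T}}$, and the tail block $W_2$) for \eqref{WU1V1-ineq2}, with the identical key bounds $\|\widetilde{U}_2^{\mathbb{T}}U_1\|\le\|\Delta\|$ and $\|\mathcal{P}_{\mathcal{T}(\widetilde{X})}(W_2)\|_F\le\sqrt{2}w_{\kappa+1}\|\Delta\|_F$. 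The only differences are in bookkeeping (you derive the Frobenius angle bound via a trace expansion and AM--GM, and use the triangle inequality plus Cauchy--Schwarz where the paper uses the Pythagorean identity of the two orthogonal pieces of $\mathcal{P}_{\mathcal{T}(\widetilde{X})}$), and both land on the same constants.
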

  \begin{proof}
   Let $\Sigma_1\!:=\!{\rm Diag}(w_1,\ldots,w_\kappa)$ and $\Sigma_2\!:=\!{\rm Diag}(w_{\kappa+1},\ldots,w_{n_1})$.
   Then, we have that
  \begin{align}
   \big\|\mathcal{P}_{\mathcal {T}(\widetilde{X})^\perp}(W)\big\|
   &=\big\|\widetilde{U}_2\widetilde{U}_2^\mathbb{T}\big[U_1(\Sigma_1\!-\!w_{\kappa+1}I)V_1^\mathbb{T}
      +(w_{\kappa+1}U_1V_1^\mathbb{T}+U_2[\Sigma_2\ \ 0]V_2^\mathbb{T})\big]\widetilde{V}_2\widetilde{V}_2^\mathbb{T}\big\|\nonumber\\
   &\le\big\|\widetilde{U}_2\widetilde{U}_2^\mathbb{T}U_1\big\|\big\|\Sigma_1\!-\!w_{\kappa+1}I\big\|\big\|\widetilde{V}_2\widetilde{V}_2^\mathbb{T}V_1\big\|+w_{\kappa+1}\nonumber\\
   & = (w_{1}-w_{\kappa+1})\big\|\widetilde{U}_2\widetilde{U}_2^\mathbb{T}U_1V_1^\mathbb{T}\big\|
         \big\|\widetilde{V}_2\widetilde{V}_2^\mathbb{T}V_1U_1^\mathbb{T}\big\|+w_{\kappa+1}\nonumber\\
   &= (w_{1}-w_{\kappa+1})\big\|\widetilde{U}_2\widetilde{U}_2^\mathbb{T}(U_1V_1^\mathbb{T}-\widetilde{U}_1\widetilde{V}_1^\mathbb{T})\big\|
      \big\|\widetilde{V}_2\widetilde{V}_2^\mathbb{T}(V_1U_1^\mathbb{T}-\widetilde{V}_1\widetilde{U}_1^\mathbb{T})\big\|+w_{\kappa+1}\nonumber\\
   &\le (w_{1}-w_{\kappa+1})\big\|U_1V_1^\mathbb{T}-\widetilde{U}_1\widetilde{V}_1^\mathbb{T}\big\|^2+w_{\kappa+1},\nonumber
  \end{align}
  where the first inequality is using $\|w_{\kappa+1}U_1V_1^\mathbb{T}\!+\!U_2[\Sigma_2\ \ 0]V_2^\mathbb{T}\|\!\leq\! w_{\kappa+1}$,
  and the second equality is due to $\|Z\|=\|ZQ^\mathbb{T}\|$ for any $Z$ and $Q$
  with $Q^\mathbb{T}Q=I$. So, inequality (\ref{WU1V1-ineq1}) holds.
  In order to establish inequality (\ref{WU1V1-ineq2}),
  we first notice that for any $Z\in\mathbb{R}^{(n_1-\kappa)\times (n_2-\kappa)}$,
  \begin{align}
   \big\|\mathcal{P}_{\mathcal {T}(\widetilde{X})}(U_2ZV_2^\mathbb{T})\big\|_F
   &=\sqrt{\big\|\widetilde{U}_1\widetilde{U}_1^\mathbb{T}U_2ZV_2^\mathbb{T}\big\|_F^2
       +\big\|\widetilde{U}_2\widetilde{U}_2^\mathbb{T}U_2ZV_2^\mathbb{T}\widetilde{V}_1\widetilde{V}_1^\mathbb{T}\big\|_F^2}\nonumber\\
   & \le  \sqrt{\|Z\|^2\big\|\widetilde{U}_1^\mathbb{T}U_2\big\|_F^2
       +\|Z\|^2\big\|V_2^\mathbb{T}\widetilde{V}_1\big\|_F^2}\nonumber\\
   & = \|Z\|\sqrt{\big\|(\widetilde{V}_1\widetilde{U}_1^\mathbb{T}-V_1U_1^\mathbb{T})U_2\big\|_F^2
       +\big\|V_2^\mathbb{T}(\widetilde{V}_1\widetilde{U}_1^\mathbb{T}-V_1U_1^\mathbb{T})\big\|_F^2}\nonumber\\
   &\le \sqrt{2}\|Z\|\|\widetilde{V}_1\widetilde{U}_1^\mathbb{T}-V_1U_1^\mathbb{T}\|_F,\nonumber
  \end{align}
  where the first equality is by the expression of $\mathcal{P}_{\mathcal{T}(\widetilde{X})}(\cdot)$.
  Then, it holds that
  \begin{align}
   \big\|\mathcal {P}_{\mathcal {T}(\widetilde{X})}(W\!-\widetilde{U}_{1}\widetilde{V}_{1}^\mathbb{T})\big\|_F
   &\le \big\|\mathcal {P}_{\mathcal {T}(\widetilde{X})}(\widetilde{U}_{1}\widetilde{V}_{1}^\mathbb{T}\!-\!U_1\Sigma_1V_1^\mathbb{T})\big\|_F
        +\big\|\mathcal {P}_{\mathcal {T}(\widetilde{X})}(U_2[\Sigma_2\ \ 0]V_2^\mathbb{T})\big\|_F\nonumber\\
   &\le \big\|\widetilde{U}_{1}\widetilde{V}_{1}^\mathbb{T}\!-U_1\Sigma_1V_1^\mathbb{T}\big\|_F
        + \sqrt{2}\|[\Sigma_2\ \ 0]\|\|\widetilde{V}_1\widetilde{U}_1^\mathbb{T}-V_1U_1^\mathbb{T}\|_F\nonumber\\
   & \le (1+\sqrt{2}w_{\kappa+1})\big\|U_{1}V_{1}^\mathbb{T}\!-\widetilde{U}_1\widetilde{V}_1^\mathbb{T}\big\|_F+\big\|U_1(I-\Sigma_1)V_1^\mathbb{T}\big\|_F
     \nonumber\\
   & \le (1+\sqrt{2}w_{\kappa+1})\|U_{1}V_{1}^\mathbb{T}\!-\!\widetilde{U}_1\widetilde{V}_1^\mathbb{T}\|_F
        +\sqrt{\kappa}\max\big(|1\!-\!w_{1}|,|1\!-\!w_\kappa|\big).\nonumber
  \end{align}
  This shows that inequality (\ref{WU1V1-ineq2}) holds. Thus, we complete the proof.
  \end{proof}

  When the matrix $W$ in Lemma \ref{WU1V1} has the simultaneous SVD as a matrix $X$ close to $\widetilde{X}$,
  by \cite[Theorem 3]{MiaoPS16} the term $\|\widetilde{U}_1\widetilde{V}_1^\mathbb{T}\!-\!U_1V_1^\mathbb{T}\|$
  can be upper bounded as follows.
  \begin{alemma}\label{lemma1-A}
   Let $X\in\mathbb{R}^{n_1\times n_2}$ be an arbitrary matrix of rank $\kappa>0$ with
   the SVD given by $U[{\rm Diag}(\sigma(X))\ \ 0]V^\mathbb{T}$, where $U\!=[U_1\ \ U_2]\in\mathbb{O}^{n_1}$
   with $U_1\in\mathbb{O}^{n_1\times\kappa}$ and  $V\!=[V_1\ \ V_2]\in\mathbb{O}^{n_2}$ with $V_1\in\mathbb{O}^{n_2\times\kappa}$.
   For any given $\omega>2$, if $\|X\!-\!\widetilde{X}\|_F\le\eta$ for some
   $\eta\in\big(0,\frac{\sigma_{\kappa}(\widetilde{X})}{\omega}\big]$, then it holds that
   \(
      \big\|U_1V_1^{\mathbb{T}}-\widetilde{U}_{1}\widetilde{V}_{1}^{\mathbb{T}}\big\|_F
    \le \frac{1}{\sqrt{2}}\ln\big(\frac{\omega}{\omega-\!\sqrt{2}}\big).
   \)
  \end{alemma}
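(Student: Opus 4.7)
My plan is to identify this statement as a specialization of \cite[Theorem 3]{MiaoPS16}, as the paper itself indicates, and to carry out the reduction by a line-integration argument along the segment joining $\widetilde{X}$ to $X$.

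First I would introduce the affine path $X(t):=\widetilde{X}+t(X-\widetilde{X})$ for $t\in[0,1]$ and verify that the rank-$\kappa$ structure is preserved along it. Weyl's inequality for singular values together with the hypothesis $\|X-\widetilde{X}\|_F\le\eta\le\sigma_\kappa(\widetilde{X})/\omega$ yields
\[
\sigma_\kappa(X(t))\;\ge\;\sigma_\kappa(\widetilde{X})-t\eta\;\ge\;\sigma_\kappa(\widetilde{X})(1-1/\omega)\;>\;0,
\]
since $\omega>2$. Hence every $X(t)$ has rank at least $\kappa$ and the $\kappa$th singular value stays uniformly separated from $\sigma_{\kappa+1}(X(t))$, so a leading rank-$\kappa$ singular-vector pair $(U_1(t),V_1(t))$ can be chosen to depend smoothly on $t$ (a standard density / perturbation argument removes any isolated crossings), with endpoints $(\widetilde{U}_1,\widetilde{V}_1)$ and $(U_1,V_1)$.

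Next I would invoke the pointwise perturbation estimate that underlies \cite[Theorem 3]{MiaoPS16}, an infinitesimal Lipschitz bound for $t\mapsto U_1(t)V_1(t)^{\mathbb{T}}$ in terms of $\|X'(t)\|_F$ and $\sigma_\kappa(X(t))$. The cleanest route is via the Hermitian dilation $\widehat{M}=\bigl(\begin{smallmatrix}0 & M\\ M^{\mathbb{T}}& 0\end{smallmatrix}\bigr)$: the identity $\|\widehat{M}_1-\widehat{M}_2\|_F=\sqrt{2}\,\|M_1-M_2\|_F$ converts the rectangular SVD perturbation into a symmetric eigenspace perturbation, and a Davis-Kahan / Wedin sine-theta estimate for the top-$\kappa$ eigenspace of $\widehat{X}(t)$ yields the desired partial-sign-matrix bound with the sharp $\sqrt{2}$ constant.

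Finally I would integrate this pointwise inequality over $t\in[0,1]$, using $\|X'(t)\|_F=\|X-\widetilde{X}\|_F\le\eta$ and the Weyl lower bound on $\sigma_\kappa(X(t))$ from Step 1; the resulting elementary integral produces a logarithm in $\eta/\sigma_\kappa(\widetilde{X})$, and its monotonicity together with $\eta\le\sigma_\kappa(\widetilde{X})/\omega$ reproduces the claimed bound $\tfrac{1}{\sqrt{2}}\ln(\omega/(\omega-\sqrt{2}))$. The substantive obstacle is Step 2: the standard Davis-Kahan theorem controls the projectors $U_1U_1^{\mathbb{T}}$ and $V_1V_1^{\mathbb{T}}$ individually, whereas the object of interest is the mixed product $U_1V_1^{\mathbb{T}}$, and tracking the sharp $\sqrt{2}$ constant through the dilation is the delicate ingredient supplied by \cite[Theorem 3]{MiaoPS16}, which I would quote rather than reprove.
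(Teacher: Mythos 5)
Your proposal matches the paper's treatment of this lemma: the paper supplies no proof at all, simply attributing the bound to \cite[Theorem 3]{MiaoPS16} in the sentence preceding the statement, and your argument likewise rests on that reference for the one substantive ingredient (the pointwise bound on the derivative of $t\mapsto U_1(t)V_1(t)^{\mathbb{T}}$ with the correct $\sqrt{2}$ constant). Your surrounding scaffolding --- Weyl's inequality to keep $\sigma_{\kappa}(X(t))$ separated from $\sigma_{\kappa+1}(X(t))$ along the segment, followed by integrating the resulting Lipschitz estimate to produce the logarithm --- is a correct outline of how the cited result is obtained, so the proposal is sound and takes essentially the same route as the paper.
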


  \vspace{0.3cm}
  \noindent
  {\bf\large Appendix B}

  \medskip

  This part includes two results on the restricted eigenvalues of $\mathcal{A}^*\mathcal{A}$.
  The first gives a relation among $\vartheta_{+}(\cdot),\vartheta_{-}(\cdot)$
  and $\pi(\cdot,\cdot)$ where for given positive integers $k,l$ with $k+l\!\le n_1$,
  \begin{equation}\label{pikl}
  \pi(k,l):=\sup_{0<{\rm rank}(X)\leq k,\atop 0<{\rm rank}(Y)\leq l,\langle X,Y\rangle=0}
  \frac{\langle X,\mathcal {A}^*\mathcal {A}(Y)\rangle\|X\|_F}{\|\mathcal {A}(X)\|^2\|Y\|}.
  \end{equation}
  \vspace{-0.3cm}
  \begin{alemma}\label{lemma1-reig}
   For any given positive integer $k,l$ with $k+l\le n_1$,
   \(
     \pi(k,l)\le \frac{\sqrt{l}}{2}\sqrt{\frac{\vartheta_{+}(l)}{\vartheta_{-}(k+l)}-1}.
   \)
  \end{alemma}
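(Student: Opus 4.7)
The plan is to exploit the quadratic identity obtained by perturbing $X$ in the direction of $Y$, and then to use the restricted eigenvalue property on the combined matrix $X-sY$ (whose rank is at most $k+l$).

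First I would fix admissible $X$ and $Y$ (i.e., $0<\mathrm{rank}(X)\le k$, $0<\mathrm{rank}(Y)\le l$, $\langle X,Y\rangle=0$) and, up to replacing $Y$ by $-Y$, assume $\langle \mathcal{A}(X),\mathcal{A}(Y)\rangle\ge 0$. For every $s\in\mathbb{R}$, the matrix $X-sY$ has rank at most $k+l$ and, using $\langle X,Y\rangle=0$, satisfies $\|X-sY\|_F^2=\|X\|_F^2+s^2\|Y\|_F^2$. Applying the lower bound in \eqref{rhok} to $X-sY$ and rearranging yields the quadratic inequality in $s$:
\[
\bigl(\|\mathcal{A}(Y)\|^2-\vartheta_{-}(k+l)\|Y\|_F^2\bigr)s^2-2\langle\mathcal{A}(X),\mathcal{A}(Y)\rangle s+\bigl(\|\mathcal{A}(X)\|^2-\vartheta_{-}(k+l)\|X\|_F^2\bigr)\ge 0.
\]
Because $\vartheta_{-}(\cdot)$ is nonincreasing, the leading coefficient is $\ge 0$, so nonnegativity for every real $s$ forces the discriminant to be nonpositive.

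Second, I would use the discriminant condition together with the upper bound $\|\mathcal{A}(Y)\|^2\le \vartheta_{+}(l)\|Y\|_F^2$ (from \eqref{rhok} applied to $Y$) to obtain
\[
\langle\mathcal{A}(X),\mathcal{A}(Y)\rangle^{2}\le \bigl(\vartheta_{+}(l)-\vartheta_{-}(k+l)\bigr)\|Y\|_F^{2}\bigl(\|\mathcal{A}(X)\|^2-\vartheta_{-}(k+l)\|X\|_F^2\bigr).
\]
Taking square roots, dividing by $\|\mathcal{A}(X)\|^{2}\|Y\|$, and multiplying by $\|X\|_F$ reduces the target quantity to bounding
\[
\frac{\|X\|_F\sqrt{\|\mathcal{A}(X)\|^2-\vartheta_{-}(k+l)\|X\|_F^2}}{\|\mathcal{A}(X)\|^{2}}\cdot\frac{\|Y\|_F}{\|Y\|}.
\]

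Third, I would handle the two remaining scalar pieces. The trivial estimate $\|Y\|_F\le \sqrt{l}\,\|Y\|$ (which holds since $\mathrm{rank}(Y)\le l$) absorbs the factor $\sqrt{l}$. For the other factor, setting $t:=\vartheta_{-}(k+l)\|X\|_F^2/\|\mathcal{A}(X)\|^{2}\in(0,1]$, one computes that the squared factor equals $t(1-t)/\vartheta_{-}(k+l)$, so the elementary bound $t(1-t)\le 1/4$ delivers $\frac{1}{2\sqrt{\vartheta_{-}(k+l)}}$. Assembling these pieces gives exactly $\frac{\sqrt{l}}{2}\sqrt{\vartheta_{+}(l)/\vartheta_{-}(k+l)-1}$, and taking the supremum over admissible $(X,Y)$ completes the proof.

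The main obstacle, such as it is, is recognizing that the clean factor $\tfrac{1}{2}$ in the bound comes from the max of $t(1-t)$ on $[0,1]$, rather than from the usual parallelogram trick applied to $X\pm Y$ (which would instead yield $\vartheta_{+}(k+l)-\vartheta_{-}(k+l)$). Writing the perturbation as $X-sY$ with a free scalar $s$, so that the $Y$-side upper bound can be replaced by the tighter $\vartheta_{+}(l)$, is the key structural observation; the rest is discriminant analysis and algebra.
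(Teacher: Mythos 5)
Your proof is correct, and it is essentially the argument the paper has in mind: the paper omits the proof of this lemma, referring instead to the proof of Proposition 3.1 in Zhang (2009), which is exactly the perturbation-plus-discriminant argument you give (restricted eigenvalue bound on $X-sY$ for all $s$, the upper bound $\|\mathcal{A}(Y)\|^2\le\vartheta_{+}(l)\|Y\|_F^2$, the rank bound $\|Y\|_F\le\sqrt{l}\,\|Y\|$, and the elementary estimate $t(1-t)\le 1/4$), transplanted from sparse vectors to low-rank matrices. The only points worth making explicit are the degenerate cases (a vanishing leading coefficient forces $\langle\mathcal{A}(X),\mathcal{A}(Y)\rangle=0$, and if $\vartheta_{-}(k+l)=0$ the claimed bound is vacuous), both of which you can dispose of in one line.
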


  Since the proof of Lemma \ref{lemma1-reig} is similar to that of \cite[Proposition 3.1]{Zhang09},
  we omit it. The second one is an extension of \cite[Lemma 10.1]{Zhang09} in the matrix setting,
  stated as follows.
  \begin{alemma}\label{lemma2-reig}
   Let $G\in \mathbb{R}^{n_1\times n_2}, U_{\!J}\in\mathbb{O}^{n_1\times |J|}$ and
   $V_{J'}\in\mathbb{O}^{n_2\times |J'|}$ be given. Let $U_{\!J}^{\mathbb{T}}GV_{J'}$
   have the SVD as $P[{\rm Diag}\big(\sigma(U_J^{\mathbb{T}}GV_{J'})\big)\ \ 0] Q^{\mathbb{T}}$,
   where $P=[P_1\ \ P_2]\in\mathbb{O}^{|J|}$ and $Q=[Q_1\ \ Q_2]\in\mathbb{O}^{|J'|}$
   with $P_1\in\mathbb{O}^{|J|\times s}$ and $Q_1\in\mathbb{O}^{|J'|\times s}$
   for an integer $1\leq s\leq \min(|J|,|J'| )$. Let
  $\mathcal{G}=\mathcal{L}^{\perp}\oplus\mathcal{J}_1$ with
  \(
    \mathcal{L}\!:=\!\big\{U_{\!J}ZV_{J'}^\mathbb{T}\ |\ Z\in \mathbb{R}^{|J|\times |J'|}\big\}
  \)
  and
  \(
    \mathcal{J}_1\!:=\!\big\{U_{\!J}P_1Z(V_{J'}Q_{1})^{\mathbb{T}}\ |\ Z\in\mathbb{R}^{s\times s }\big\}.
  \)
  Then, for any $H\in\mathcal{G}$, the following inequality holds with $l=\max_{Z\in \mathcal{L}^\perp}{\rm rank}(Z)$:
  \begin{align}
    \max\big(0,\langle H,\mathcal {A}^*\mathcal {A}(G)\rangle\big)
    &\ge \vartheta_-( l+s)\big(\|H\|_F -s^{-1}\pi( l+s,s)\big\|\mathcal{P}_{\mathcal{L}}(G)\big\|_*\big)\|H\|_F\nonumber\\
    &\quad -\vartheta_+( l+s)\|H\|_F\|\mathcal{P}_{\mathcal{G}}(G-H)\|_F.\nonumber
  \end{align}
  \end{alemma}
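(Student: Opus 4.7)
The plan is to expand $\langle H,\mathcal{A}^*\mathcal{A}(G)\rangle$ around $H\in\mathcal{G}$ via the identity $G=H+\mathcal{P}_{\mathcal{G}}(G-H)+\mathcal{P}_{\mathcal{G}^\perp}(G)$, where $\mathcal{P}_{\mathcal{G}^\perp}(G-H)$ reduces to $\mathcal{P}_{\mathcal{G}^\perp}(G)$ because $H\in\mathcal{G}$. This produces
\[
  \langle H,\mathcal{A}^*\mathcal{A}(G)\rangle
  =\|\mathcal{A}(H)\|^2+\langle H,\mathcal{A}^*\mathcal{A}(\mathcal{P}_{\mathcal{G}}(G-H))\rangle
   +\langle H,\mathcal{A}^*\mathcal{A}(\mathcal{P}_{\mathcal{G}^\perp}(G))\rangle,
\]
which I would estimate, respectively, from below by $\vartheta_{-}$, from above by $\vartheta_{+}$, and from above by $\pi$, then assemble.

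The first two estimates come from rank counting. Writing any $H\in\mathcal{G}$ as $H_0+H_1$ with $H_0\in\mathcal{L}^\perp$ and $H_1\in\mathcal{J}_1$, the definition of $l$ gives $\mathrm{rank}(H_0)\le l$, and the explicit form of $\mathcal{J}_1$ gives $\mathrm{rank}(H_1)\le s$; hence $\mathrm{rank}(H)\le l+s$, and the same bound applies to $\mathcal{P}_{\mathcal{G}}(G-H)\in\mathcal{G}$. Therefore the definition \eqref{rhok} yields $\|\mathcal{A}(H)\|^2\ge\vartheta_{-}(l+s)\|H\|_F^2$, and Cauchy--Schwarz combined with the rank-$(l+s)$ upper bound on both factors gives $|\langle H,\mathcal{A}^*\mathcal{A}(\mathcal{P}_{\mathcal{G}}(G-H))\rangle|\le\vartheta_{+}(l+s)\|H\|_F\|\mathcal{P}_{\mathcal{G}}(G-H)\|_F$.

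The third term is where the block structure enters. Since $\mathcal{P}_{\mathcal{G}^\perp}(G)\in\mathcal{L}\cap\mathcal{J}_1^\perp$ admits the explicit expansion $U_JP_2\,\mathrm{Diag}(\sigma_{s+1},\sigma_{s+2},\ldots)\,Q_2^{\mathbb{T}}V_{J'}^{\mathbb{T}}$ with $\sigma_i=\sigma_i(U_J^{\mathbb{T}}GV_{J'})$, I would partition the tail indices $\{s+1,s+2,\ldots\}$ into consecutive blocks of size $s$ and write $\mathcal{P}_{\mathcal{G}^\perp}(G)=\sum_{j\ge1}G_j$ with $\mathrm{rank}(G_j)\le s$, $\|G_j\|=\sigma_{js+1}$, and $G_j\in\mathcal{G}^\perp$. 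The last inclusion gives $\langle H,G_j\rangle=0$, which activates the definition \eqref{pikl} of $\pi(l+s,s)$ and yields $|\langle H,\mathcal{A}^*\mathcal{A}(G_j)\rangle|\le\pi(l+s,s)\|\mathcal{A}(H)\|^2\|G_j\|/\|H\|_F$. Summing over $j$ and invoking the monotone-rearrangement estimate $\sum_{j\ge1}\sigma_{js+1}\le s^{-1}\sum_{i\ge1}\sigma_i=s^{-1}\|\mathcal{P}_{\mathcal{L}}(G)\|_*$ produces $|\langle H,\mathcal{A}^*\mathcal{A}(\mathcal{P}_{\mathcal{G}^\perp}(G))\rangle|\le s^{-1}\pi(l+s,s)\|\mathcal{A}(H)\|^2\|\mathcal{P}_{\mathcal{L}}(G)\|_*/\|H\|_F$.

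Combining the three bounds yields
\[
  \langle H,\mathcal{A}^*\mathcal{A}(G)\rangle
   \ge\|\mathcal{A}(H)\|^2\Big(1-\tfrac{\pi(l+s,s)\|\mathcal{P}_{\mathcal{L}}(G)\|_*}{s\|H\|_F}\Big)
    -\vartheta_{+}(l+s)\|H\|_F\|\mathcal{P}_{\mathcal{G}}(G-H)\|_F,
\]
and I would close the argument with a case split on the sign of the bracketed factor: when it is nonnegative, substituting $\|\mathcal{A}(H)\|^2\ge\vartheta_{-}(l+s)\|H\|_F^2$ reproduces the stated inequality exactly; when it is negative, the entire right-hand side of the lemma is already non-positive, so the $\max(0,\cdot)$ on the left makes the assertion trivial. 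I expect the main obstacle to be arranging the block decomposition $\mathcal{P}_{\mathcal{G}^\perp}(G)=\sum_j G_j$ so that each piece genuinely lies in $\mathcal{G}^\perp$ (which is what legitimates applying $\pi(l+s,s)$ term by term through the orthogonality $\langle H,G_j\rangle=0$), and controlling the tail-sum $\sum_j\sigma_{js+1}$ cleanly against $s^{-1}\|\mathcal{P}_{\mathcal{L}}(G)\|_*$ rather than against a larger fraction of the singular-value mass.
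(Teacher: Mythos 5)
Your proposal is correct and follows essentially the same route as the paper: the tail $\mathcal{P}_{\mathcal{G}^\perp}(G)$ is split into rank-$s$ blocks of consecutive singular triplets of $U_J^{\mathbb{T}}GV_{J'}$, each block is controlled via $\pi(l+s,s)$ using its orthogonality to $H$, the spectral norms of the blocks are summed against $s^{-1}\|\mathcal{P}_{\mathcal{L}}(G)\|_*$ by the monotone-rearrangement bound, and the remaining terms are handled by $\vartheta_{-}$ and Cauchy--Schwarz with $\vartheta_{+}$. The only cosmetic difference is that you treat the case ${\rm rank}(U_J^{\mathbb{T}}GV_{J'})\le s$ uniformly (empty tail sum) and place the sign case-split at the end, whereas the paper separates these as Case 1 and an initial triviality check.
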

  \begin{proof}
   Let $H$ be an arbitrary matrix from $\mathcal{G}$.
   If $\|H\|_F\!\le\! \frac{\pi( l+s,s)}{s}\big\|\mathcal{P}_{\mathcal{L}}(G)\big\|_*$,
   the conclusion is clear. So, we assume that
   $\|H\|_F\!>\! \frac{\pi( l+s,s)}{s}\big\|\mathcal{P}_{\mathcal{L}}(G)\big\|_*$.
   By the definition of $\vartheta_+(l+s)$,
   $\|\mathcal{A}\mathcal{P}_{\mathcal{G}}(H-G)\|^2 \le\vartheta_+( l+s)\|\mathcal{P}_{\mathcal{G}}(H-G)\|_F^2$
   and $\|\mathcal{A}(H)\|^2\le\vartheta_+( l+s)\|H\|_F^2$. Then,
   \begin{equation}\label{equa-preprop}
     \big\langle H,\mathcal{A}^*\mathcal{A}\mathcal{P}_{\mathcal{G}}(G-H)\big\rangle
     \ge -\|\mathcal{A}(H)\|\|\mathcal{A}\mathcal{P}_{\mathcal{G}}(H-G)\|
     \ge -\vartheta_+( l+s)\|H\|_F\|\mathcal{P}_{\mathcal{G}}(H-G)\|_F.
   \end{equation}
   We proceed the arguments by considering the following two cases.

   \medskip
   \noindent
  {\bf Case 1:} ${\rm rank}(U_{\!J}^{\mathbb{T}}GV_{\!J'})\leq s\leq \min(|J|,|J'| )$.
  Now, by the expression of $\mathcal {P}_{\mathcal{J}_1}$, we have
  \[
   \mathcal{P}_{\mathcal{J}_1}(G)=U_{\!J}P_{1}P_{1}^{\mathbb{T}}U_{\!J}^{\mathbb{T}}GV_{\!J'}Q_{1}Q_{1}^{\mathbb{T}}V_{\!J'}^{\mathbb{T}}
   =U_{\!J}P_{1}\big[{\rm Diag}(\sigma(U_{\!J}^{\mathbb{T}}GV_{\!J'}))\ \ 0\big]Q_{1}^{\mathbb{T}}V_{\!J'}^{\mathbb{T}}
   =U_{\!J}U_{\!J}^{\mathbb{T}}GV_{\!J'}V_{\!J'}^{\mathbb{T}},
  \]
  where the last two equalities are due to $U_{\!J}^{\mathbb{T}}GV_{\!J'}
  \!=\!P_{1}[{\rm Diag}\big(\sigma(U_{\!J}^{\mathbb{T}}GV_{\!J'})\big)\ \ 0]Q_{1}^{\mathbb{T}}$.
  Note that $\mathcal {P}_{\mathcal{L}}(G)=U_{\!J}U_{\!J}^{\mathbb{T}}GV_{\!J'}V_{\!J'}^{\mathbb{T}}$
  by the definition of $\mathcal{L}$. So, $\mathcal{P}_{\mathcal {L}}(G)=\mathcal{P}_{\mathcal {J}_1}(G)$,
  i.e., $G\in \mathcal{G}$. Then,
  \begin{align}
   \langle \mathcal{A}(H),\mathcal{A}(G)\rangle
   &=\langle \mathcal{A}(H),\mathcal{A}(H)\rangle + \langle \mathcal{A}(H),\mathcal{A}\mathcal{P}_{\mathcal {G}}(G-H)\rangle\nonumber\\
   &\geq \vartheta_-( l+s)\|H\|_F^2-\vartheta_+( l+s)\|H\|_F\|\mathcal{P}_{\mathcal{G}}(H-G)\|_F.\nonumber
  \end{align}
  This inequality implies the desired result. Thus, we complete the proof for this case.

  \medskip
   \noindent
  {\bf Case 2:} $s<{\rm rank}(U_{\!J}^{\mathbb{T}}GV_{\!J'})$.
  Let $k$ be the smallest positive integer such that $sk\geq \min(|J|,|J'|)$.
  Clearly, $k\ge 2$. Let $l_{i}$ and $\widetilde{l}_{i}$ for $i=1,2,\ldots,k$ be such that
  \[
    l_{1}=\cdots=l_{k-1}=s,\ l_{k}=|J|-s(k-1),\ \ \widetilde{l}_{1}=\cdots=\widetilde{l}_{k-1}=s,\ \widetilde{l}_{k}=|J'|-s(k-1).
  \]
  For each $2\leq i\leq k$, we define the subspace
  $\mathcal {J}_i:=\big\{U_{\!J}P_iZ(V_{\!J}Q_{i})^{\mathbb{T}}\ |\ Z\in\mathbb{R}^{l_{i}\times \widetilde{l}_{i} }\big\}$,
   where $P_i\in\mathbb{O}^{|J|\times l_{i}}$ is the matrix consisting of the $(\sum_{j=1}^{i-1}l_{j}\!+\!1)$th column
   to the $(\sum_{j=1}^{i}l_{j})$th  column of $P$; and $Q_i\in\mathbb{O}^{|J|\times \widetilde{l}_{i}}$ is the matrix
   consisting of the $(\sum_{j=1}^{i-1}\widetilde{l}_{j}\!+\!1)$th column to the $(\sum_{j=1}^{i}\widetilde{l}_{j})$th  column of $Q$.
   Clearly, $\mathcal{J}_1\perp\mathcal{J}_i$ for $i\ge 2$. From the definition of $\mathcal{G}$,
   we have
   \(
     \mathcal{G}\perp\mathcal{J}_i
   \)
   for $i\ne 1$. For each $i\geq 1$, it is easy to calculate that
   \[
     \mathcal{P}_{\mathcal{J}_i}(Z)=U_{\!J}P_{i}(U_{\!J}P_{i})^{\mathbb{T}}ZV_{\!J'}Q_{i}(V_{\!J'}Q_{i})^{\mathbb{T}}
     \quad\ \forall Z\in\mathbb{R}^{n_1\times n_2}.
   \]
   This, together with $\mathcal{P}_{\mathcal {L}}(G)=U_{\!J}U_{\!J}^{\mathbb{T}}GV_{\!J'}V_{\!J'}^{\mathbb{T}}$, implies that
   \(
    \mathcal{P}_{\mathcal {L}}(G) =\sum_{i=1}^k\mathcal{P}_{J_i}(G).
   \)
   Then, $\langle H,\mathcal{A}^*\mathcal{A}(G)\rangle=\langle H,\mathcal{A}^*\mathcal{A}\mathcal{P}_{\mathcal{G}}(G)\rangle
    +{\textstyle{\sum_{i>1}\big\langle H,\mathcal {A}^*\mathcal {A}\mathcal{P}_{J_i}(G)\big\rangle}}$.
   Consequently, we have that
   \begin{align}\label{equa-preprop1}
     &\big\langle H,\mathcal{A}^*\mathcal{A}(G)\big\rangle-\big\langle H,\mathcal{A}^*\mathcal{A}\mathcal{P}_{\mathcal {G}}(G-H)\big\rangle\nonumber\\
     &=\langle H,\mathcal{A}^*\mathcal{A}(H)\rangle
       +\sum_{i>1}\big\langle\mathcal{P}_{\mathcal {G}}(H),\mathcal{A}^*\mathcal{A}\mathcal{P}_{\mathcal {J}_i}(G)\big\rangle\nonumber\\
     &=\langle H,\mathcal{A}^*\mathcal{A}(H)\rangle
       \left[1+\sum_{i>1}\frac{\langle H,\mathcal{A}^*\mathcal{A}\mathcal{P}_{\mathcal {J}_i}(G)\rangle\|H\|_F}{\|\mathcal{A}(H)\|^2\|\mathcal{P}_{\mathcal {J}_i}(G)\|}\frac{\|\mathcal{P}_{\mathcal {J}_i}(G)\|}{\|H\|_F}\right]\nonumber\\
     &\ge \langle H,\mathcal{A}^*\mathcal{A}(H)\rangle\Big[1-\pi( l+s,s)\frac{\sum_{i>1}\|\mathcal{P}_{\mathcal {J}_i}(G)\|}{\|H\|_F}\Big]\qquad\nonumber\\
     &\ge\langle H,\mathcal{A}^*\mathcal{A}(H)\rangle\Big[1-\frac{\pi( l+s,s)\|\mathcal{P}_{\mathcal{L}}(G)\|_*}{s\|H\|_F}\Big]\qquad\nonumber\\
     &\ge \vartheta_-( l+s)\|H\|_F\Big[\|H\|_F-s^{-1}\pi( l+s,s)\|\mathcal{P}_{\mathcal{L}}(G)\|_*\Big],
   \end{align}
   where the first inequality is using the definition of $\pi$ by the fact that
   $H\in\mathcal{G},\mathcal{P}_{\mathcal {J}_i}(G)\in\mathcal{J}_i$ and ${\rm rank}(\mathcal{P}_{\mathcal {J}_i}(G))\leq s$,
   $\mathcal{G}\perp\mathcal{J}_i$ for $i>1$, and the second inequality is due to
   \[
     {\textstyle{\sum_{i>1}\|\mathcal{P}_{\mathcal {J}_i}(G)\|
      \leq s^{-1}\sum_{i=1}\big\|\mathcal{P}_{\mathcal {J}_i}(G)\big\|_*
      =s^{-1}\|\mathcal{P}_{\mathcal{L}}(G)\|_*}},
   \]
   since $\|\mathcal{P}_{\mathcal {J}_{i+1}}(G)\|\le s^{-1}\|\mathcal{P}_{\mathcal {J}_i}(G)\|_*$.
   Combining (\ref{equa-preprop1}) with (\ref{equa-preprop}), we get the result.
  \end{proof}

  \vspace{0.3cm}
  \noindent
  {\bf\large Appendix C}

  \medskip

  This part includes the proofs of all the results in Section \ref{sec4}.
  For convenience, in this part we write $\Delta^{\!k}\!:=X^{k}\!-\!\overline{X}$ for $k\ge 1$.
  We first establish two preliminary lemmas.
  \begin{alemma}\label{lemma1-sec41}
   If $\|\mathcal{P}_{\mathcal{T}^{\perp}}(W^{k-1})\|<1$ for some $k\ge 1$, then with $\gamma_{k-1}$
   defined by \eqref{gammak+g}
   \[
    \|\mathcal{P}_{\mathcal{T}^{\perp}}(\Delta^{\!k})\|_*\leq\gamma_{k-1}\sqrt{2r}\|\mathcal{P}_{\mathcal {T}}(\Delta^{\!k})\|_F.
   \]
  \end{alemma}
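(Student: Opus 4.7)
The plan is to exploit the optimality of $X^k$ for the nuclear semi-norm subproblem \eqref{subprob-X} and then invoke the convexity (subgradient inequality) for the nuclear norm at $\overline{X}$.

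First, I will argue that $\overline{X}$ is feasible for \eqref{subprob-X}: it lies in $\Omega$ by assumption, and $\|\mathcal{A}\overline{X}-b\|=\|\xi\|\le\delta$. Optimality of $X^k$ then gives
\[
 \|X^k\|_* - \langle W^{k-1}, X^k\rangle \le \|\overline{X}\|_* - \langle W^{k-1}, \overline{X}\rangle,
\]
which can be rearranged to $\|X^k\|_* - \|\overline{X}\|_* \le \langle W^{k-1}, \Delta^{\!k}\rangle$.

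Next I will use the standard subgradient lower bound for the nuclear norm at $\overline{X}$. Since the subdifferential $\partial\|\overline{X}\|_*$ consists of matrices of the form $\overline{U}_1\overline{V}_1^{\mathbb{T}}+Z$ with $Z\in\mathcal{T}^\perp$ and $\|Z\|\le 1$, I can choose $Z$ dual to $\mathcal{P}_{\mathcal{T}^\perp}(\Delta^{\!k})$ so that $\langle Z,\Delta^{\!k}\rangle = \|\mathcal{P}_{\mathcal{T}^\perp}(\Delta^{\!k})\|_*$. This yields
\[
 \|X^k\|_* \ge \|\overline{X}\|_* + \langle \overline{U}_1\overline{V}_1^{\mathbb{T}}, \Delta^{\!k}\rangle + \|\mathcal{P}_{\mathcal{T}^\perp}(\Delta^{\!k})\|_*.
\]
Combining with the previous inequality produces the key estimate
\[
 \|\mathcal{P}_{\mathcal{T}^\perp}(\Delta^{\!k})\|_* \le \langle W^{k-1}-\overline{U}_1\overline{V}_1^{\mathbb{T}}, \Delta^{\!k}\rangle.
\]

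The final step is to split $\Delta^{\!k} = \mathcal{P}_{\mathcal{T}}(\Delta^{\!k}) + \mathcal{P}_{\mathcal{T}^\perp}(\Delta^{\!k})$ and bound the two pieces of the right-hand side separately. For the $\mathcal{T}$-component I apply Cauchy-Schwarz in the Frobenius norm to get a factor $\|\mathcal{P}_{\mathcal{T}}(W^{k-1}-\overline{U}_1\overline{V}_1^{\mathbb{T}})\|_F\,\|\mathcal{P}_{\mathcal{T}}(\Delta^{\!k})\|_F$. For the $\mathcal{T}^\perp$-component, using that $\overline{U}_1\overline{V}_1^{\mathbb{T}}\in\mathcal{T}$ so its projection onto $\mathcal{T}^\perp$ vanishes, the trace-duality inequality $|\langle A,B\rangle|\le\|A\|\,\|B\|_*$ yields a bound $\|\mathcal{P}_{\mathcal{T}^\perp}(W^{k-1})\|\,\|\mathcal{P}_{\mathcal{T}^\perp}(\Delta^{\!k})\|_*$. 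Moving this last term to the left-hand side is legitimate because the hypothesis $\|\mathcal{P}_{\mathcal{T}^\perp}(W^{k-1})\|<1$ leaves a positive coefficient $1-\|\mathcal{P}_{\mathcal{T}^\perp}(W^{k-1})\|$, and dividing by it reproduces exactly the definition of $\gamma_{k-1}$ in \eqref{gammak+g}, giving the claimed inequality.

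I do not foresee a serious obstacle; the only delicate point is picking the subgradient $\overline{U}_1\overline{V}_1^{\mathbb{T}}+Z$ with the right $Z\in\mathcal{T}^\perp,\,\|Z\|\le 1$ so that the nuclear-norm lower bound directly produces the extra term $\|\mathcal{P}_{\mathcal{T}^\perp}(\Delta^{\!k})\|_*$ (otherwise the separation into $\mathcal{T}$ and $\mathcal{T}^\perp$ contributions would not cleanly split). Once that dual element is in hand, the argument is a short calculation using trace duality and the definition of $\gamma_{k-1}$.
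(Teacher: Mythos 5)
Your proposal is correct and follows essentially the same route as the paper: optimality of $X^k$ against the feasible point $\overline{X}$, the nuclear-norm lower bound $\|X^k\|_*-\|\overline{X}\|_*\ge\langle \overline{U}_1\overline{V}_1^{\mathbb{T}},\Delta^{\!k}\rangle+\|\mathcal{P}_{\mathcal{T}^\perp}(\Delta^{\!k})\|_*$ (which the paper quotes as the directional-derivative formula and you derive by picking the right subgradient $\overline{U}_1\overline{V}_1^{\mathbb{T}}+Z$), the split of $\langle W^{k-1}-\overline{U}_1\overline{V}_1^{\mathbb{T}},\Delta^{\!k}\rangle$ into $\mathcal{T}$ and $\mathcal{T}^\perp$ pieces bounded by Cauchy--Schwarz and trace duality, and division by $1-\|\mathcal{P}_{\mathcal{T}^\perp}(W^{k-1})\|$. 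No gaps.
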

  \begin{proof}
  By the optimality of $X^{k}$ and the feasibility of $\overline{X}$ to the subproblem \eqref{subprob-X},
  \[
   \|X^k\|_*-\langle W^{k-1},X^k\rangle\leq \|\overline{X}\|_*-\langle W^{k-1},\overline{X}\rangle.
  \]
  From the directional derivative of the nuclear norm at $\overline{X}$, it follows that
  \[
   \|X^k\|_*-\|\overline{X}\|_*\geq \langle \overline{U}_1\overline{V}_1^\mathbb{T}, X^k-\overline{X}\rangle
   + \|\mathcal {P}_{\mathcal{T}^\perp}(X^k-\overline{X})\|_*.
  \]
  The last two equations imply that
  \(
   \langle \overline{U}_1\overline{V}_1^\mathbb{T}, \Delta^{\!k}\rangle+\|\mathcal{P}_{\mathcal {T}^\perp}(\Delta^{\!k})\|_*
   \leq \langle W^{k-1}, \Delta^{\!k}\rangle.
  \)
  Hence,
  \[
  \langle \overline{U}_1\overline{V}_1^\mathbb{T}, \mathcal{P}_{\mathcal{T}}(\Delta^{\!k})\rangle+\|\mathcal{P}_{\mathcal{T}^{\perp}}(\Delta^{\!k})\|_*
  \le \langle W^{k-1}, \Delta^{\!k}\rangle.
  \]
  This, along with $\langle W^{k-1}, \Delta^{\!k}\rangle=\langle \mathcal{P}_{\mathcal{T}^{\perp}}(W^{k-1}),\mathcal{P}_{\mathcal{T}^{\perp}}(\Delta^{\!k})\rangle+\langle W^{k-1},  \mathcal{P}_{\mathcal {T}}(\Delta^{\!k})\rangle$, yields that
  \begin{align}
  \|\mathcal{P}_{\mathcal{T}^{\perp}}(\Delta^{\!k})\|_*-\langle \mathcal{P}_{\mathcal{T}^{\perp}}(W^{k-1}), \mathcal{P}_{\mathcal{T}^{\perp}}(\Delta^{\!k})\rangle
  &\le \langle \mathcal{P}_{\mathcal {T}}(W^{k-1}\!-\!\overline{U}_1\overline{V}_1^\mathbb{T}), \mathcal{P}_{\mathcal {T}}(\Delta^{\!k})\rangle.\nonumber
 \end{align}
  Using the relation $|\langle Y, Z\rangle|\leq \|Y\|\|Z\|_*$ for any $Y, Z\in \mathbb{R}^{n_1\times n_2}$,
 we obtain that
 \[
   \big(1\!-\|\mathcal{P}_{\mathcal{T}^{\perp}}(W^{k-1})\|\big)\|\mathcal{P}_{\mathcal{T}^{\perp}}(\Delta^{\!k})\|_*
   \le \|\mathcal{P}_{\mathcal {T}}(W^{k-1}\!-\!\overline{U}_1\overline{V}_1^\mathbb{T})\|_F\|\mathcal{P}_{\mathcal {T}}(\Delta^{\!k})\|_F.
 \]
 From this inequality and the definition of $\gamma_{k-1}$, we obtain the desired result.
 \end{proof}
 \begin{alemma}\label{lemma2-sec41}
  Suppose that $\|\mathcal{P}_{\mathcal {T}^{\perp}}(W^{k-1})\|<1$ for some $k\ge 1$.
  Let $\overline{U}_{\!2}^{\mathbb{T}}\Delta^{\!k}\overline{V}_{\!2}$ have the SVD as
  $P^k\big[{\rm Diag}(\sigma(\overline{U}_{\!2}^{\mathbb{T}}\Delta^{\!k}\overline{V}_{\!2}))\ \ 0\big](Q^k)^{\mathbb{T}}$
  where $P^k\!=[P_1^k\ \ P_2^k]\in\mathbb{O}^{n_1-r}$ and $Q^k\!=[Q_1^k\ \ Q_2^k]\in\mathbb{O}^{n_2-r}$
  with $P_{1}^k\in\mathbb{O}^{(n_1-r)\times s}$ and $Q_1^k\in\mathbb{O}^{(n_2-r)\times s}$
  for an integer $1\le s\le n_1\!-\!r$, and define $\mathcal{M}^k:=\mathcal {T}\oplus\mathcal{H}^k$ with
  $\mathcal {H}^k\!=\big\{\overline{U}_{\!2}P_1^kY(\overline{V}_{\!2}Q_1^k)^{\mathbb{T}}\ |\ Y\in\mathbb{R}^{s\times s}\big\}$.
  Then, it holds that
  \[
    \big\|\Delta^{\!k}\big\|_F\le \sqrt{1+r\gamma_{k-1}^2/(2s)}\,\big\|\mathcal{P}_{\mathcal {M}^k}(\Delta^{\!k})\big\|_F .
  \]
  \end{alemma}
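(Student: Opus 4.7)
The plan is to decompose $\Delta^{k}$ orthogonally with respect to $\mathcal{M}^k$, then control the residual $\|\mathcal{P}_{(\mathcal{M}^k)^\perp}(\Delta^{k})\|_F$ using the singular values of $\mathcal{P}_{\mathcal{T}^\perp}(\Delta^k)$, and finally invoke Lemma C.1 to transfer the bound onto $\|\mathcal{P}_{\mathcal{T}}(\Delta^k)\|_F$.

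First I would note that by the definitions of $\mathcal{T}^\perp$ and $\mathcal{H}^k$, the subspace $\mathcal{H}^k$ lies inside $\mathcal{T}^\perp$ and consists exactly of the leading rank-$s$ part of $\mathcal{P}_{\mathcal{T}^\perp}(\Delta^k)=\overline{U}_2\overline{U}_2^{\mathbb{T}}\Delta^k\overline{V}_2\overline{V}_2^{\mathbb{T}}$ under the SVD in the statement. Consequently, writing $\tau_i:=\sigma_i(\overline{U}_2^{\mathbb{T}}\Delta^k\overline{V}_2)$, we have $\|\mathcal{P}_{\mathcal{H}^k}(\Delta^k)\|_*=\sum_{i=1}^s\tau_i$ and $\|\mathcal{P}_{(\mathcal{M}^k)^\perp}(\Delta^k)\|_F^2=\sum_{i>s}\tau_i^2$, with the two sums adding to $\|\mathcal{P}_{\mathcal{T}^\perp}(\Delta^k)\|_*$ and $\|\mathcal{P}_{\mathcal{T}^\perp}(\Delta^k)\|_F^2\!-\!\|\mathcal{P}_{\mathcal{H}^k}(\Delta^k)\|_F^2$ respectively. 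Since the $\tau_i$ are non-increasing, $\tau_{s+1}\le\frac{1}{s}\sum_{i\le s}\tau_i$, and the estimate
\[
  {\textstyle\sum_{i>s}}\tau_i^2\le \tau_{s+1}\!{\textstyle\sum_{i>s}}\tau_i\le\frac{1}{s}\big({\textstyle\sum_{i\le s}}\tau_i\big)\big({\textstyle\sum_{i>s}}\tau_i\big)\le\frac{1}{4s}\|\mathcal{P}_{\mathcal{T}^\perp}(\Delta^k)\|_*^2
\]
follows, where the last inequality is the elementary AM--GM bound $ab\le(a+b)^2/4$.

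Next I would apply Lemma C.1 (which uses the hypothesis $\|\mathcal{P}_{\mathcal{T}^\perp}(W^{k-1})\|<1$) to replace $\|\mathcal{P}_{\mathcal{T}^\perp}(\Delta^k)\|_*$ with $\gamma_{k-1}\sqrt{2r}\,\|\mathcal{P}_{\mathcal{T}}(\Delta^k)\|_F$, obtaining
\[
  \|\mathcal{P}_{(\mathcal{M}^k)^\perp}(\Delta^k)\|_F^2
  \le\frac{2r\gamma_{k-1}^2}{4s}\|\mathcal{P}_{\mathcal{T}}(\Delta^k)\|_F^2
  =\frac{r\gamma_{k-1}^2}{2s}\|\mathcal{P}_{\mathcal{T}}(\Delta^k)\|_F^2.
\]
Since $\mathcal{T}\subseteq\mathcal{M}^k$, we have $\|\mathcal{P}_{\mathcal{T}}(\Delta^k)\|_F\le\|\mathcal{P}_{\mathcal{M}^k}(\Delta^k)\|_F$, so combining with the Pythagorean identity $\|\Delta^k\|_F^2=\|\mathcal{P}_{\mathcal{M}^k}(\Delta^k)\|_F^2+\|\mathcal{P}_{(\mathcal{M}^k)^\perp}(\Delta^k)\|_F^2$ gives $\|\Delta^k\|_F^2\le(1+\frac{r\gamma_{k-1}^2}{2s})\|\mathcal{P}_{\mathcal{M}^k}(\Delta^k)\|_F^2$, and taking square roots finishes the proof.

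The main obstacle is getting the constant $1/(2s)$ (rather than, say, $2/s$) in the final bound. The only nontrivial step in that direction is the AM--GM step $(\sum_{i\le s}\tau_i)(\sum_{i>s}\tau_i)\le\tfrac14\|\mathcal{P}_{\mathcal{T}^\perp}(\Delta^k)\|_*^2$, which saves a factor of $4$ over the naive $\tau_{s+1}\cdot\|\mathcal{P}_{\mathcal{T}^\perp}(\Delta^k)\|_*\le\frac{1}{s}\|\mathcal{P}_{\mathcal{T}^\perp}(\Delta^k)\|_*^2$ estimate; everything else is orthogonal decomposition and a direct application of Lemma C.1.
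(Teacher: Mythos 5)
Your proposal is correct and follows essentially the same route as the paper's proof: identify $\mathcal{P}_{\mathcal{H}^k}(\Delta^k)$ with the leading rank-$s$ part of $\mathcal{P}_{\mathcal{T}^\perp}(\Delta^k)$, bound $\|\mathcal{P}_{(\mathcal{M}^k)^\perp}(\Delta^k)\|_F^2$ by $\frac{1}{4s}\|\mathcal{P}_{\mathcal{T}^\perp}(\Delta^k)\|_*^2$ via the spectral-times-nuclear bound and $ab\le(a+b)^2/4$, then apply Lemma \ref{lemma1-sec41} and the Pythagorean identity. The paper's argument is the same computation written as $\|\cdot\|_F^2\le\|\cdot\|\,\|\cdot\|_*$ on the residual, so there is nothing substantive to distinguish the two.
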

  \begin{proof}
   By the definitions of the subspaces $\mathcal{T}^{\perp}$ and $\mathcal{H}^k$, for any $Z\in\mathbb{R}^{n_1\times n_2}$,
   \[
     \mathcal{P}_{\mathcal{T}^{\perp}}(Z)=\overline{U}_2\overline{U}_2^{\mathbb{T}}Z\overline{V}_{2}\overline{V}_2^{\mathbb{T}}
     \ \ {\rm and}\ \
     \mathcal{P}_{\mathcal{H}^k}(Z)=\overline{U}_2P_1^k(\overline{U}_2P_1^k)^{\mathbb{T}}Z\overline{V}_2Q_1^k(\overline{V}_2Q_1^k)^{\mathbb{T}}.
   \]
   This implies that $\mathcal{P}_{\mathcal{H}^k}(Z)\!=\!\mathcal{P}_{\mathcal{H}^k}(\mathcal{P}_{\mathcal{T}^{\perp}}(Z))$
   for $Z\!\in\!\mathbb{R}^{n_1\times n_2}$.
   So,
   \(
   \mathcal{P}_{\mathcal {H}^k}(\Delta^{\!k})\!=\!\mathcal{P}_{\mathcal {H}^k}(\mathcal{P}_{\mathcal{T}^{\perp}}(\Delta^{\!k})).
   \)
   In addition, by the expression of $\mathcal{P}_{\mathcal{T}^{\perp}}(\Delta^{\!k})$ and the SVD of
   $\overline{U}_2^{\mathbb{T}}\Delta^{\!k}\overline{V}_2$, we have that
    \begin{align}\label{equa51}
     \mathcal{P}_{\mathcal {T}^\perp}(\Delta^{\!k})=\overline{U}_2\big(\overline{U}_2^{\mathbb{T}}\Delta^{\!k}\overline{V}_2\big)\overline{V}_2^{\mathbb{T}}
     \!=\overline{U}_2P\big[{\rm Diag}(\sigma(\overline{U}_2^{\mathbb{T}}\Delta^{\!k}\overline{V}_2))\ \ 0\big]Q^{\mathbb{T}}\overline{V}_2^{\mathbb{T}}.
   \end{align}
   Thus, from the expression of $\mathcal{P}_{\mathcal{H}^k}(\mathcal{P}_{\mathcal{T}^{\perp}}(\Delta^{\!k}))$
   and equation (\ref{equa51}), it follows that
   \begin{align}\label{equa52}
    \mathcal{P}_{\mathcal {H}^k}(\Delta^{\!k})
    = \mathcal{P}_{\mathcal{H}^k}(\mathcal{P}_{\mathcal{T}^{\perp}}(\Delta^{\!k}))
    = \overline{U}_2P_1^k\big[{\rm Diag}\big(\sigma^{s,\downarrow}(\overline{U}_2^{\mathbb{T}}\Delta^{\!k}\overline{V}_2)\big)\ \ 0\big](Q_1^k)^{\mathbb{T}}\overline{V}_2^{\mathbb{T}},
   \end{align}
   where $\sigma^{s,\downarrow}(\overline{U}_{\!2}^{\mathbb{T}}\Delta^{\!k}\overline{V}_{\!2})$
   is the vector consisting of the first $s$ components of $\sigma(\overline{U}_{\!2}^{\mathbb{T}}\Delta^{\!k}\overline{V}_{\!2})$.
   Notice that $\mathcal{P}_{\mathcal {M}^k}(\Delta^{\!k})=\mathcal{P}_{\mathcal {T}}(\Delta^{\!k})+\mathcal{P}_{\mathcal {H}^k}(\Delta^{\!k})$
   since the subspaces $\mathcal {T}$ and $\mathcal {H}^k$ are orthogonal.
   By combining this with equalities (\ref{equa51}) and (\ref{equa52}), we can obtain that
   \begin{align*}\label{temp-equa1-sec5}
   &\|\Delta^{\!k}\!-\!\mathcal{P}_{\mathcal {M}^k}(\Delta^{\!k})\|
   = \|\mathcal{P}_{\mathcal{T}^{\perp}}(\Delta^{\!k})\!-\!\mathcal{P}_{\mathcal {H}^k}(\Delta^{\!k})\|
  \le s^{-1}\|\mathcal{P}_{\mathcal {H}^k}(\Delta^{\!k})\|_*, \nonumber\\
  &\|\Delta^{\!k}\!-\!\mathcal{P}_{\mathcal {M}^k}(\Delta^{\!k})\|_*
   = \|\mathcal{P}_{\mathcal{T}^{\perp}}(\Delta^{\!k})\!-\!\mathcal{P}_{\mathcal {H}^k}(\Delta^{\!k})\|_*
   = \|\mathcal{P}_{\mathcal{T}^{\perp}}(\Delta^{\!k})\|_*\!-\!\|\mathcal{P}_{\mathcal {H}^k}(\Delta^{\!k})\|_*.
  \end{align*}
  Together with $\|\mathcal{P}_{(\mathcal {M}^k)^\perp}(\Delta^{\!k})\|_F^2
  \le\|\mathcal{P}_{(\mathcal {M}^k)^\perp}(\Delta^{\!k})\|\|\mathcal{P}_{(\mathcal {M}^k)^\perp}(\Delta^{\!k})\|_*$
  and Lemma \ref{lemma1-sec41},
  \begin{align}
  \|\mathcal{P}_{(\mathcal {M}^k)^\perp}(\Delta^{\!k})\|_F
  & \leq \big(\|\Delta^{\!k}\!-\!\mathcal{P}_{\mathcal {M}^k}(\Delta^{\!k})\|\|\Delta^{\!k}\!-\!\mathcal{P}_{\mathcal {M}^k}(\Delta^{\!k})\|_*\big)^{1/2}\leq \frac{1}{2\sqrt{s}}\|\mathcal{P}_{\mathcal{T}^{\perp}}(\Delta^{\!k})\|_*\nonumber\\
  &\leq \frac{\gamma_{k-1}\sqrt{2r}}{2\sqrt{s}}\big\|\mathcal{P}_{\mathcal {T}}(\Delta^{\!k})\big\|_F
  \le \frac{\gamma_{k-1}\sqrt{2r}}{2\sqrt{s}}\big\|\mathcal{P}_{\mathcal {M}^k}(\Delta^{\!k})\big\|_F,\nonumber
  \end{align}
  where the second inequality is using the fact that $ab\le (a+b)^2/4$ for $a,b\in\mathbb{R}$.
  The result then follows by noting that
  $\|\Delta^{\!k}\|_F^2=\|\mathcal{P}_{\mathcal {M}^k}(\Delta^{\!k})\|_F^2
  +\|\mathcal{P}_{(\mathcal {M}^k)^\perp}(\Delta^{\!k})\|_F^2$.
  \end{proof}

  \medskip
  \noindent
  {\bf Proof of Proposition \ref{prop1-sec41}:}
  By the definition of $\gamma_{k-1}$ and $\gamma_{k-1}\!\in[0,1/c)$, it is clear that
  $\|\mathcal{P}_{\mathcal {T}^{\perp}}(W^{k-1})\|<1$. From Assumption \ref{assump}
  and Lemma \ref{lemma1-reig} of Appendix B, it follows that
  \begin{equation}\label{ineq-pi}
    \frac{\pi(2r+s,s)\gamma_{k-1}}{s}\le \frac{c_{k-1}}{\sqrt{2r}}\ \ {\rm with}\ \
    c_{k-1}=c\gamma_{k-1}<1.
  \end{equation}
  Applying Lemma \ref{lemma2-reig} of Appendix B with $\mathcal{L}=\mathcal{T}^{\perp}, \mathcal{J}_1=\mathcal{H}^k,
  \mathcal{G}=\mathcal{M}^k, H=\mathcal{P}_{\mathcal{M}^k}(\Delta^{\!k})$ and $G=\Delta^{\!k}$ and noting that
  $\mathcal{P}_{\mathcal{M}^k}(G-H)=0$ since $G-H=\mathcal{P}_{(\mathcal{M}^k)^{\perp}}(\Delta^{\!k})$,
  we have that
  \begin{align}\label{temp-ineq54}
   &\max\big(0,\langle \mathcal{P}_{\mathcal{M}^k}(\Delta^{\!k}),\mathcal{A}^*\mathcal{A}(\Delta^{\!k})\rangle\big)\nonumber\\
   &\ge \vartheta_{-}(2r\!+\!s)\Big(\big\|\mathcal{P}_{\mathcal{M}^k}(\Delta^{\!k})\big\|_F
    -\frac{\pi(2r\!+\!s,s)}{s}\big\|\mathcal{P}_{\mathcal{T}^{\perp}}(\Delta^{\!k})\big\|_*\Big)
    \big\|\mathcal{P}_{\mathcal{M}^k}(\Delta^{\!k})\big\|_F\nonumber\\
   &\ge \vartheta_{-}(2r\!+\!s)\Big(\|\mathcal{P}_{\mathcal{M}^k}(\Delta^{\!k})\|_F
   -c_{k-1}\big\|\mathcal{P}_{\mathcal {T}}(\Delta^{\!k})\big\|_F\Big)\|\mathcal{P}_{\mathcal{M}^k}(\Delta^{\!k})\|_F\nonumber\\
   &\ge \vartheta_{-}(2r\!+\!s)(1 - c_{k-1})\|\mathcal{P}_{\mathcal{M}^k}(\Delta^{\!k})\|_F^2\ge 0,
  \end{align}
  where the second inequality is due to Lemma \ref{lemma1-sec41} and equation \eqref{ineq-pi},
  and the last one is due to
  $\|\mathcal{P}_{\mathcal {T}}(\Delta^{\!k})\|_F\le\|\mathcal{P}_{\mathcal{M}^k}(\Delta^{\!k})\|_F$.
  In addition, by the definition of $\vartheta_{+}(\cdot)$, it holds that
  \begin{align}
    \max\big(0,\langle \mathcal{P}_{\mathcal{M}^k}(\Delta^{\!k}),\mathcal{A}^*\mathcal{A}\Delta^{\!k}\rangle\big)
     \le\|\mathcal {A}(\mathcal{P}_{\mathcal{M}^k}(\Delta^{\!k}))\|\|\mathcal {A}\Delta^{\!k}\|
     \le 2\delta\sqrt{\vartheta_{+}(2r\!+\!s)}\|\mathcal{P}_{\mathcal{M}^k}(\Delta^{\!k})\|_F.\nonumber
  \end{align}
  Together with \eqref{temp-ineq54}, we obtain that
  \(
   \|\mathcal{P}_{\mathcal{M}^k}(\Delta^{\!k})\|_F
   \le\frac{2\delta\sqrt{\vartheta_{+}(2r+s)}}{(1-c_{k-1})\vartheta_{-}(2r+s)}.
  \)
  The first inequality in \eqref{noisyless-bound} then follows by Lemma \ref{lemma2-sec41}.
  While from Lemma \ref{lemma1-sec41} it follows that
  \[
    \big\|\mathcal{P}_{\mathcal {T}^{\perp}}(X^{k})\big\|_*
    \!\le \!\sqrt{2r}\gamma_{k-1} \big\|\mathcal{P}_{\mathcal {T}}(X^{k})\big\|_F
    \!\le \!\sqrt{2r}\gamma_{k-1} \big\|\mathcal{P}_{\mathcal{M}^k}(X^{k})\big\|_F
    \!\le \frac{2\delta\sqrt{2r}\gamma_{k-1}\sqrt{\vartheta_{+}(2r\!+\!s)}}{(1-c_{k-1})\vartheta_{-}(2r\!+\!s)}.
  \]
  This implies the second inequality in \eqref{noisyless-bound}.
  Thus, we complete the proof. \hfill$\Box$\medskip

  \medskip
  \noindent
  {\bf Proof of Theorem \ref{theorem1-sec41}:} By the definition of $\widetilde{\gamma}_k$
  and Remark \ref{remark-bound}(b), it suffices to prove that
  \begin{subnumcases}{}\label{wabk-order}
   0\le\widetilde{a}_{k}\le\widetilde{a}_{k-1}\le\cdots\le \widetilde{a}_{1}\le \widetilde{b}_1
   \le \cdots\le \widetilde{b}_{k-1}\le\widetilde{b}_{k}\le 1,\\
   0\le\widetilde{\beta}_{k}<\widetilde{\beta}_{k-1}<\cdots<\widetilde{\beta}_{1}<1\ \ {\rm and}\ \
   0\le\gamma_{k}\le \widetilde{\gamma}_{k}<\widetilde{\gamma}_{k-1}<\cdots<\widetilde{\gamma}_{0}.
   \label{betak-order}
 \end{subnumcases}
  By \eqref{dpsi-cg1} and the definitions of $\widetilde{a}_k$ and $\widetilde{b}_k$,
  we have $\{\widetilde{a}_k\}_{k\ge 1}\subseteq[0,1]$ and $\{\widetilde{b}_k\}_{k\ge 1}\subseteq[0,1]$.
  We next establish the monotone relation in \eqref{wabk-order}-\eqref{betak-order} by induction on $k$.
  Let $X^1$ have the SVD as $U^{1}[{\rm Diag}(\sigma(X^1))\ \ 0](V^{1})^{\mathbb{T}}$ where
  $U^{1}=[U_1^{1}\ \ U_2^{1}]\in\mathbb{O}^{n_1}$ and $V^{1}=[V_1^{1}\ \ V_2^{1}]\in\mathbb{O}^{n_2}$
  with $U_1^{1}\in\mathbb{O}^{n_1\times r}$ and $V_1^{1}\in\mathbb{O}^{n_2\times r}$. Then
 \(
    W^1=U^1\big[{\rm Diag}\big(w_1^{1},w_2^{1},\ldots,w_{n_1}^{1})\ \ 0\big](V^1)^{\mathbb{T}}
 \)
 with
 \(
    1\ge w_1^1\ge w_2^1\ge \cdots\ge w_{n_1}^1\ge 0.
 \)
  Since $\gamma_0=1/\sqrt{2}$, the assumption of Proposition \ref{prop1-sec41} holds.
  Then, $\|X^1\!-\!\overline{X}\|_F \le\Xi(\gamma_0)=\Xi(\widetilde{\gamma}_0)$.
  From \cite[Theorem 3.3.16]{HJ91}, it follows that
  \begin{equation}
    \sigma_i(X^1)\ge \sigma_r(\overline{X})-\Xi(\widetilde{\gamma}_0),\ i=1,\ldots,r \ \
    \textrm{and}\ \ \sigma_i(X^1)\leq \Xi(\widetilde{\gamma}_0),\ i=r\!+\!1,\ldots,n_1.\nonumber
  \end{equation}
  Together with the definitions of $\widetilde{a}_1$ and $\widetilde{b}_1$ and inequality \eqref{dpsi-cg2},
  it is easy to deduce that
  \begin{equation}
     w^1_i\geq \widetilde{b}_1,\ i=1,2,\ldots,r\quad\ \textrm{and}\quad \
     t^*\leq w^1_i\leq \widetilde{a}_1,\ i=r+1,\ldots,n_1.\nonumber
  \end{equation}
  This implies that $\widetilde{a}_1\le \widetilde{b}_1$.
  Also, using Lemma \ref{WU1V1} with $\widetilde{X}=\overline{X}$ and $W=W^1$ yields that
  \begin{align}
    &\|\mathcal{P}_{\mathcal {T}^\perp}(W^1)\|\leq w^1_{r+1}+(1-w^1_{r+1})\big\|U_1^1(V_1^1)^\mathbb{T}\!-\!\overline{U}_1\overline{V}_1^\mathbb{T}\big\|^2,\nonumber\\
   & \|\mathcal{P}_{\mathcal {T}}(W^{1}\!-\!\overline{U}_1\overline{V}_1^\mathbb{T})\|_F \leq \sqrt{r}(1-\widetilde{b}_1)+(\sqrt{2}\widetilde{a}_1+1)\big\|U_1^1(V_1^1)^\mathbb{T}\!-\!\overline{U}_1\overline{V}_1^\mathbb{T}\big\|.\nonumber
  \end{align}
   Since $\|X^1\!-\!\overline{X}\|_F \le \Xi(\widetilde{\gamma}_0)$,
   applying Lemma \ref{lemma1-A} with $\omega=\sigma_r(\overline{X})/\Xi(\widetilde{\gamma}_0)$,
   $\widetilde{X}=\overline{X}, X=X^1$ and $\eta=\Xi(\widetilde{\gamma}_0)$ we obtain that
   $\|U_1^1(V_1^1)^\mathbb{T}\!-\!\overline{U}_1\overline{V}_1^\mathbb{T}\|\le \widetilde{\beta}_1<1$.
   Then,
   \begin{subnumcases}{}\label{W-U1V1-gr1}
    1\!-\!\|\mathcal{P}_{\mathcal {T}^\perp}(W^{1})\|\geq (1\!-\!\widetilde{a}_1)\big(1\!-\!\widetilde{\beta}_1^2\big),\\
    \|\mathcal{P}_{\mathcal {T}}(W^{1}\!-\!\overline{U}_1\overline{V}_1^\mathbb{T})\|_F \leq \sqrt{r}(1-\widetilde{b}_1)+(\sqrt{2}\widetilde{a}_1+1)\widetilde{\beta}_1.
    \label{W-U1V1-gr2}
   \end{subnumcases}
   Since $\rho_1$ is chosen such that
   \(
    \widetilde{a}_1<\frac{\sqrt{r}(\widetilde{b}_1-\widetilde{\beta}_1^2)-\widetilde{\beta}_1}
    {\sqrt{r}(1-\widetilde{\beta}_1^2)+\sqrt{2}\widetilde{\beta}_1}<1,
   \)
   by the definitions of $\gamma_1$ and $\widetilde{\gamma}_1$, we have that
   $0\leq \gamma_1\leq \widetilde{\gamma}_1<1/\sqrt{2}$.
   Thus, the conclusion holds for $k=1$.

  \medskip

  Now assume that the conclusion holds for $k\le l-1$ with $l\ge 2$.
  We shall show that it holds for $k=l$. Since the conclusion holds for $k=l\!-\!1$,
  we have $\gamma_{l-1}\le\widetilde{\gamma}_{l-1}<1/\sqrt{2}$. This means that the assumption
  of Proposition \ref{prop1-sec41} holds for $k=l$. Consequently,
  \[
    \|X^{l}\!-\!\overline{X}\|_F\le \Xi(\gamma_{l-1})\le \Xi(\widetilde{\gamma}_{l-1}).
  \]
  Let $X^l$ have the SVD given by $X^l\!=\!U^{l}[{\rm Diag}(\sigma(X^{l}))\ \ 0](V^{l})^{\mathbb{T}}$,
  where $U^{l}\!=[U_1^{l}\ \ U_2^{l}]\in\mathbb{O}^{n_1}$ and $V^{l}=[V_1^{l}\ \ V_2^{l}]\in\mathbb{O}^{n_2}$
  with $U_1^{l}\in\mathbb{O}^{n_1\times r}$ and $V_1^{l}\in\mathbb{O}^{n_2\times r}$. Then we have that
  \[
    W^{l}=U^{l}\big[{\rm Diag}\big(w_1^{l},\ldots,w_{n_1}^{l})\ \ 0\big](V^{l})^{\mathbb{T}}
    \ \ {\rm with}\ \ 1\ge w_1^l\ge \cdots\ge w_{n_1}^l\ge 0.
  \]
  From \cite[Theorem 3.3.16]{HJ91},
  $\sigma_i(X^l)\ge \sigma_r(\overline{X})-\Xi(\widetilde{\gamma}_{l-1})$ for $i=1,\ldots,r$
  and $\sigma_i(X^{l})\leq \Xi(\widetilde{\gamma}_{l-1})$ for $i=r\!+\!1,\ldots,n_1$.
  By the definitions of $\widetilde{a}_{l}$ and $\widetilde{b}_{l}$ and
  inequality \eqref{dpsi-cg2},
  \begin{equation}\label{wlt}
     w^{l}_i\geq \widetilde{b}_{l},\ i=1,2,\ldots,r\ \ \textrm{and}\ \
     t^*\leq w^{l}_i\leq \widetilde{a}_{l},\ i=r\!+\!1,\ldots,n_1.
  \end{equation}
  Since the conclusion holds for $k=l\!-\!1$, we have $\Xi(\widetilde{\gamma}_{l-1})<\Xi(\widetilde{\gamma}_0)$,
  and then $\frac{\sigma_r(\overline{X})}{\Xi(\widetilde{\gamma}_{l-1})}>2$.
  Using Lemma \ref{WU1V1} with $\widetilde{X}=\overline{X}$ and $W=W^{l}$ and Lemma \ref{lemma1-A}
  with $\omega=\frac{\sigma_r(\overline{X})}{\Xi(\widetilde{\gamma}_{l-1})},\widetilde{X}=\overline{X}$,
  $X=X^{l}$ and $\eta=\Xi(\widetilde{\gamma}_{l-1})$ and following the same arguments as $k=1$, we obtain that
  \begin{subnumcases}{}\label{Wl-U1V1-gr1}
    1\!-\!\|\mathcal{P}_{\mathcal {T}^\perp}(W^{l})\|\geq (1\!-\!\widetilde{a}_{l})(1\!-\!\widetilde{\beta}_l^2),\\
    \|\mathcal{P}_{\mathcal {T}}(W^{l}\!-\!\overline{U}_1\overline{V}_1^\mathbb{T})\|_F
    \leq \sqrt{r}(1-\widetilde{b}_l)+(\sqrt{2}\widetilde{a}_l+1)\widetilde{\beta}_l.
    \label{Wl-U1V1-gr2}
  \end{subnumcases}
  Notice that $1\leq \mu_{l}\leq \frac{\Xi(\widetilde{\gamma}_{l-2})}{\Xi(\widetilde{\gamma}_{l-1})}$.
  So, $\rho_{l-1}\le\rho_{l}\le\frac{\rho_{l-1}\Xi(\widetilde{\gamma}_{l-2})}{\Xi(\widetilde{\gamma}_{l-1})}$.
  By the definitions of $\widetilde{a}_l$ and $\widetilde{b}_l$ and inequality \eqref{dpsi-cg2},
  $\widetilde{a}_{l}\leq \widetilde{a}_{l-1}$ and $\widetilde{b}_{l}\geq \widetilde{b}_{l-1}$.
  In addition, noting that $\Xi(\widetilde{\gamma}_{l-1})<\Xi(\widetilde{\gamma}_{l-2})$
  since $\widetilde{\gamma}_{l-1}<\widetilde{\gamma}_{l-2}$, we also have
  $\widetilde{\beta}_{l}<\widetilde{\beta}_{l-1}$. Equations \eqref{Wl-U1V1-gr1}
  and \eqref{Wl-U1V1-gr2} and the definitions of $\gamma_l$ and $\widetilde{\gamma}_{l}$
  imply that
 \(
   0\leq \gamma_{l}\leq \widetilde{\gamma}_{l}<\widetilde{\gamma}_{l-1}.
 \)
  Thus, the conclusion holds for $k=l$.  \hfill$\Box$\medskip

  \medskip
  \noindent
  {\bf Proof of Theorem \ref{theorem1-sec42}:}
  Notice that the assumption of Theorem \ref{theorem1-sec41} is satisfied.
  The monotone relation in \eqref{wabk-order} and \eqref{betak-order} holds
  for all $k\ge 2$. Clearly, inequality \eqref{gconverge1} holds for $k=1$.
  Now fix $k\ge 2$. Let $X^{k-1}$ have the SVD as
  $U^{k-1}[{\rm Diag}(\sigma(X^{k-1}))\ \ 0](V^{k-1})^{\mathbb{T}}$, where
  $U^{k-1}=[U_1^{k-1}\ \ U_2^{k-1}]\in\mathbb{O}^{n_1}$ and $V^{k-1}=[V_1^{k-1}\ \ V_2^{k-1}]\in\mathbb{O}^{n_2}$
  with $U_1^{k-1}\in\mathbb{O}^{n_1\times r}$ and $V_1^{k-1}\in\mathbb{O}^{n_2\times r}$.
  Then $W^{k-1}=U^{k-1}[{\rm Diag}\big(w_1^{k-1},w_2^{k-1},\ldots,w_{n_1}^{k-1})\ \ 0] (V^{k-1})^{\mathbb{T}}$
  with $1\ge w_1^{k-1}\ge \cdots\ge w_{n_1}^{k-1}\!\ge 0$. Using the same arguments as those for
  Theorem \ref{theorem1-sec41}, we get
  \begin{align*}
   1\!-\!\|\mathcal{P}_{\mathcal {T}^\perp}(W^{k-1})\|\geq (1\!-\!\widetilde{a}_{k-1})(1\!-\!\widetilde{\beta}_{k-1}^2),
   \qquad\qquad\qquad\qquad\\
    \big\|\mathcal{P}_{\mathcal {T}}(W^{k-1}\!-\!\overline{U}_1\overline{V}_1^\mathbb{T})\big\|_F \leq \sqrt{r}(1\!-\!\widetilde{b}_{k-1})+(1\!+\!\sqrt{2}\widetilde{a}_{k-1})
    \big\|U_1^{k-1}(V_1^{k-1})^\mathbb{T}\!-\!\overline{U}_1\overline{V}_1^\mathbb{T}\big\|.
  \end{align*}
  Also, from \cite[Equation(49)-(51)]{MiaoPS16},
  \(
    \big\|U_1^{k-1}(V_1^{k-1})^\mathbb{T}\!-\!\overline{U}_1\overline{V}_1^\mathbb{T}\big\|_F\le \frac{\|X^{k-1}-\overline{X}\|_F}{\sigma_{r}(\overline{X})\!-\!\sqrt{2}\Xi(\gamma_0)}.
  \)
  Thus, together with the definition of $\gamma_{k-1}$, we immediately obtain that
  \[
    \gamma_{k-1}
    \le \frac{1-\widetilde{b}_{1}}{\sqrt{2}(1-\widetilde{a}_{1})(1\!-\!\widetilde{\beta}_{1}^2)} + \frac{1+\sqrt{2}\widetilde{a}_{1}}{\sqrt{2r}(1-\widetilde{a}_{1})(1\!-\!\widetilde{\beta}_{1}^2)}
    \cdot\frac{\|X^{k-1}-\overline{X}\|_F}{\sigma_{r}(\overline{X})\!-\!\sqrt{2}\Xi(\gamma_0)}.
  \]
  From the first part of Theorem \ref{theorem1-sec41} and the first inequality of \eqref{noisyless-bound},
  it follows that
   \begin{align}\label{pcc1}
    \|X^{k}\!-\!\overline{X}\|_F
    &\le \frac{2\delta\sqrt{\vartheta_{+}(2r\!+\!s)}}{(1\!-\!c\gamma_{k-1})\vartheta_{-}(2r\!+\!s)}
         \Big(1\!+\!\sqrt{\frac{r}{2s}}\gamma_{k-1}\Big)
         =\frac{\Xi(0)}{(1\!-\!c\gamma_{k-1})}
         \Big(1\!+\!\sqrt{\frac{r}{2s}}\gamma_{k-1}\Big)\nonumber\\
    &\le \frac{\Xi(0)}{1\!-\!c\widetilde{\gamma}_1}
        \Big[1+\frac{(1-\widetilde{b}_{1})\sqrt{r}}{2(1\!-\!\widetilde{a}_{1})(1\!-\!\widetilde{\beta}_{1}^2)\sqrt{s}}\Big]
       \!+\!\Big[\frac{\alpha\,\Xi(\gamma_0)}{\sigma_{r}(\overline{X})\!-\!\sqrt{2}\Xi(\gamma_0)}\Big]\|X^{k-1}\!-\!\overline{X}\|_F
   \end{align}
  where the second inequality is using
  $\Xi(\gamma_0)=\frac{\Xi(0)}{1\!-\!c\widetilde{\gamma}_0}\sqrt{\frac{4s+r}{4s}}$.
  Since $\sigma_r(\overline{X})>(\sqrt{2}+\alpha)\Xi(\gamma_0)$ implies
  $0\le\frac{\alpha\,\Xi(\gamma_0)}{\sigma_{r}(\overline{X})\!-\!\sqrt{2}\Xi(\gamma_0)}<1$,
  the desired inequality follows by the recursion \eqref{pcc1}.
   \hfill$\Box$\medskip
  \begin{alemma}\label{lemma-noise}
  If the components $\xi_1,\xi_2,\ldots,\xi_m$ of $\xi$ are independent sub-Gaussians,
  then $\|\xi\|\le \sqrt{m}\sigma$ with probability at least $1-\exp(1-\frac{c_1m}{4})$
  for an absolute constant $c_1>0$.
 \end{alemma}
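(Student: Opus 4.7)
The plan is to derive the stated tail bound via the standard concentration inequality for sums of squares of independent sub-Gaussian random variables, using the machinery from \cite{Vershynin11}. Since $\|\xi\| \le \sqrt{m}\sigma$ is equivalent to $\|\xi\|^2 = \sum_{i=1}^m \xi_i^2 \le m\sigma^2$ (up to constants absorbed in $c_1$), the goal is to control the upper deviations of $\sum_i \xi_i^2$.

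First, I would translate the hypothesis $\mathbb{E}[e^{t\xi_i}] \le e^{\sigma^2 t^2/2}$ into control on the sub-Gaussian $\psi_2$-norm of each $\xi_i$, which is bounded by a constant multiple of $\sigma$ by the standard equivalence of sub-Gaussian characterizations in \cite{Vershynin11}. From this, I would invoke the well-known fact that the square of a sub-Gaussian variable is sub-exponential with $\psi_1$-norm bounded by the square of the $\psi_2$-norm, so that each $\xi_i^2$ is sub-exponential with norm $O(\sigma^2)$. A Taylor expansion of the mgf around $t=0$ also yields $\mathbb{E}[\xi_i^2]\le\sigma^2$, which bounds the mean of $\sum_i\xi_i^2$.

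Second, I would apply Bernstein's inequality for independent centered sub-exponential random variables (see \cite{Vershynin11}) to $Y_i := \xi_i^2 - \mathbb{E}[\xi_i^2]$, obtaining
\[
   \mathbb{P}\!\left(\sum_{i=1}^m Y_i > u\right) \le \exp\!\left(-c\min\!\left(\frac{u^2}{m\sigma^4},\,\frac{u}{\sigma^2}\right)\right)
\]
for an absolute constant $c>0$. Choosing $u$ proportional to $m\sigma^2$ makes both arguments of the minimum linear in $m$, so the right-hand side becomes $\exp(-c' m)$ for some $c'>0$. Combining with $\sum_i \mathbb{E}[\xi_i^2]\le m\sigma^2$ and adjusting the proportionality constants (which are absorbed into $c_1$ and the additive $1$ in the exponent), one reaches the claimed bound $\|\xi\|\le\sqrt{m}\sigma$ with probability at least $1-\exp(1-c_1 m/4)$.

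The main obstacle is essentially bookkeeping the absolute constants so that they match the stated form $1-\exp(1-c_1 m/4)$; the probabilistic content is entirely routine once the sub-Gaussian-to-sub-exponential passage and Bernstein's inequality are in hand. No deeper argument is required, and the result can be viewed as a direct consequence of standard tools in non-asymptotic random matrix theory as presented in \cite{Vershynin11}.
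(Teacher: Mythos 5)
Your route is genuinely different from the paper's, and it has a gap that prevents it from reaching the stated constant. The paper bounds $\|\xi\|=\sup_{u\in\mathcal{S}^{m-1}}\langle u,\xi\rangle$ via a $1/2$-covering of the sphere, reduces to a single linear form $2\langle\overline{u},\xi\rangle$, and applies the Hoeffding-type inequality for sums of independent sub-Gaussians at level $t=\sqrt{m}\sigma$. You instead work with $\|\xi\|^2=\sum_{i}\xi_i^2$ and Bernstein's inequality for sub-exponential variables. The problem is that Bernstein only controls the deviation of $\sum_i\xi_i^2$ \emph{above its mean}, and the hypothesis $\mathbb{E}[e^{t\xi_i}]\le e^{\sigma^2t^2/2}$ permits $\mathbb{E}[\xi_i^2]=\sigma^2$ exactly (take $\xi_i\sim\mathcal{N}(0,\sigma^2)$), so $\sum_i\mathbb{E}[\xi_i^2]$ can already sit at the target threshold $m\sigma^2$. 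Taking $u$ proportional to $m\sigma^2$ then yields only $\sum_i\xi_i^2\le(1+c')m\sigma^2$, i.e.\ $\|\xi\|\le\sqrt{1+c'}\,\sqrt{m}\,\sigma$ for some $c'>0$. That multiplicative constant cannot be ``absorbed into $c_1$'': $c_1$ rescales the exponent of the failure probability, not the bound on $\|\xi\|$. Indeed, for i.i.d.\ $\mathcal{N}(0,\sigma^2)$ components the event $\|\xi\|>\sqrt{m}\sigma$ has probability tending to $1/2$, so no argument that only bounds the upward fluctuation of $\sum_i\xi_i^2$ about its mean can certify the threshold $m\sigma^2$ itself with exponentially small failure probability.

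The paper's linear-form route avoids this centering issue because $\langle u,\xi\rangle$ has mean zero, so the sub-Gaussian tail bound applies directly at the desired level. (For fairness: the paper's own argument is also delicate on this exact point, since its $\overline{u}$ depends on $\xi$ and a union bound over the net would inflate the constant in the exponent; the exact prefactor $1$ in $\sqrt{m}\sigma$ is fragile either way.) To salvage your approach you would need either to weaken the conclusion to $\|\xi\|\le C\sqrt{m}\sigma$ for an absolute $C>1$, or to strengthen the hypothesis to something like $\mathbb{E}[\xi_i^2]\le(1-\epsilon)\sigma^2$; as written, the proposal proves a different (weaker-by-a-constant) statement than the lemma claims.
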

 \begin{proof}
  Notice that
  \(
    \|\xi\|=\sup_{u\in \mathcal{S}^{m-1}}\langle u,\xi\rangle,
  \)
  where $\mathcal{S}^{m-1}$ denotes the unit sphere in $\mathbb{R}^m$.
  Let $\mathcal{U}:=\{u^1,\ldots,u^{m}\}$ denote $1/2$ covering of $\mathcal{S}^{m-1}$.
  Then, for any $u\in \mathcal{S}^{m-1}$, there exists $\overline{u}\in\mathcal{U}$
  such that $u=\overline{u}+\Delta u$ with $\|\Delta u\|\le 1/2$.
  Consequently,
  \(
    \langle u,\xi\rangle =\langle \overline{u},\xi\rangle
    +\langle\Delta u,\xi\rangle  \le \langle \overline{u},\xi\rangle + \frac{1}{2}\|\xi\|.
  \)
  This, by $\|\xi\|=\sup_{u\in \mathcal{S}^{m-1}}\langle u,\xi\rangle$, implies that
  $\|\xi\|\le 2\langle \overline{u},\xi\rangle=2\sum_{i=1}^m\overline{u}_i\xi_i$.
  By applying the Hoeffding-type inequality (see \cite{Vershynin11}),
  we know that there exists    an absolute constant $c_1$ such that for every $t>0$,
  \[
   \mathbb{P}\left\{\|\xi\|\ge t\right\} \le \mathbb{P}\left\{\big|{\textstyle\sum_{i=1}^m} \overline{u}_i\xi_i\big|\ge t/2\right\}
    \le \exp\big(1-\!c_1t^2/(4\sigma^2)\big).
  \]
  Taking $t=\sqrt{m}\sigma$ yields the desired result.
  \end{proof}
 \end{document}